\newtheorem{theorem}{Theorem}[section]
\newtheorem{thm}[theorem]{Theorem}
\newtheorem{lemma}[theorem]{Lemma}
\newtheorem{lem}[theorem]{Lemma}
\newtheorem{remark}[theorem]{Remark}
\newtheorem{proposition}[theorem]{Proposition}
\newtheorem{prop}[theorem]{Proposition}
\newtheorem{corollary}[theorem]{Corollary}
\newtheorem{hyp}[theorem]{HYPOTHESIS}
\theoremstyle{definition}
\newtheorem{defn}[theorem]{Definition}
\newtheorem{ex}[theorem]{Example}
 \newtheorem{nott}{Notation}
\theoremstyle{remark}
\numberwithin{equation}{section}
 \DeclareMathAlphabet{\mathpzc}{OT1}{pzc}{m}{it}
 \DeclareMathAlphabet{\mathsfsl}{OT1}{cmss}{m}{sl}
  \newcommand{\FH}{\mathfrak{H}}
\newcommand{\dif}{\mathrm{d}}
\newcommand{\abs}[1]{\left\vert#1\right\vert}
\newcommand{\set}[1]{\left\{#1\right\}}
\newcommand{\norm}[1]{\left\Vert#1\right\Vert}
\newcommand{\E}{\mathbb{E}}
 \newcommand{\tensor}[1]{\mathsf{#1}}
 \newcommand{\Rnum}{\mathbb{R}}
 \newcommand{\innp}[1]{\langle {#1}\rangle}
\newcommand{\Be}{\begin{equation}}
\newcommand{\Ee}{\end{equation}}
\newcommand{\Bs}{\begin{split}}
\newcommand{\Es}{\end{split}}
\newcommand{\Bes}{\begin{equation*}}
\newcommand{\Ees}{\end{equation*}}
\newcommand{\BT}{\begin{thm}}
\newcommand{\ET}{\end{thm}}
\newcommand{\Bp}{\begin{proof}}
\newcommand{\Ep}{\end{proof}}
\newcommand{\BL}{\begin{lem}}
\newcommand{\EL}{\end{lem}}
\newcommand{\BP}{\begin{proposition}}
\newcommand{\EP}{\end{proposition}}
\newcommand{\BC}{\begin{corollary}}
\newcommand{\EC}{\end{corollary}}
\newcommand{\BR}{\begin{remark}}
\newcommand{\ER}{\end{remark}}
\newcommand{\BD}{\begin{defn}}
\newcommand{\ED}{\end{defn}}
\newcommand{\BI}{\begin{itemize}}
\newcommand{\EI}{\end{itemize}}
\newcommand{\red}{\textcolor[rgb]{1.00,0.00,0.00}}
\begin{document}
\title[Parameter estimation for Ornstein-Uhlenbeck processes]{Parameter estimation for an Ornstein-Uhlenbeck Process driven by a general Gaussian noise}
\author[Y. Chen]{Yong CHEN}
 \address{College of Mathematics and Information Science, Jiangxi Normal University, Nanchang, 330022, Jiangxi, China}
\email{zhishi@pku.org.cn;\, chenyong77@gmail.com}
 \author[H.Zhou]{Hongjuan ZHOU}
 \address{ School of Mathematical and Statistical Sciences, Arizona State University,\,Arizona, USA. }
 \email {Hongjuan.Zhou@asu.edu}
\begin{abstract}

In this paper, we consider an inference problem for an Ornstein-Uhlenbeck process driven by a general one-dimensional centered Gaussian process $(G_t)_{t\ge 0}$. The second order mixed partial derivative of the covariance function $ R(t,\, s)=\mathbb{E}[G_t G_s]$ can be decomposed into two parts, one of which coincides with that of fractional Brownian motion and the other is bounded by $(ts)^{\beta-1}$ up to a constant factor.
This condition is valid for a class of continuous Gaussian processes that fails to be self-similar or have stationary increments. Some examples include the subfractional Brownian motion and the bi-fractional Brownian motion. Under this assumption, we study the parameter estimation for drift parameter in the Ornstein-Uhlenbeck process driven by the Gaussian noise $(G_t)_{t\ge 0}$. For the least squares estimator and the second moment estimator constructed from the continuous observations, we prove the strong consistency and the asympotic normality, and obtain the Berry-Ess\'{e}en bounds.  The proof is based on the inner product's representation of the Hilbert space $\mathfrak{H}$ associated with the Gaussian noise $(G_t)_{t\ge 0}$, and the estimation of the inner product based on the results of the Hilbert space associated with the fractional Brownian motion.\\

{\bf Keywords:} Fourth Moment theorem; Ornstein-Uhlenbeck process; Gaussian process; Malliavin calculus.\\

{\bf MSC 2000:} 60H07; 60F25; 62M09.
\end{abstract}
\maketitle

\section{ Introduction}\label{sec 03}

We are interested in the statistical inference for the Ornstein-Uhlenbeck process defined by the following stochastic differential equation (SDE)
\begin{equation}\label{fOU}
\mathrm{d} X_t= -\theta X_t\mathrm{d} t+ \sigma \mathrm{d}G_t,\quad  t \in [0,T], \ T >0
\end{equation} where $X_0=0$ and $(G_t)_{t\ge 0}$ is a general one-dimensional centered Gaussian process. We note that the volatility parameter $\sigma>0$ can be estimated by power variation method (for example, see \cite{il97}, \cite{KM 12}). Without loss of generality, we will assume that $\sigma=1$. Suppose that only one trajectory $(X_t, t \geq 0)$ can be obtained. We would like to construct a consistent estimator for the unknown drift parameter $\theta>0$ and study its asymptotic behavior.

When the Gaussian process is Brownian motion, the statistical inference problem about the parameter $\theta$ has been intensively studied over the past decade (see \cite{k04}, \cite{ls01} and the references therein). In the fractional Brownian motion case, the consistency property for the maximum likelihood estimation (MLE) method was obtained in \cite{KLB 02}, \cite{tv 07}, and the central limit theorem was proved in \cite{bcs11}, \cite{bk10}. The least squares method was studied in \cite{hu Nua 10} and its asymptotic behavior was proved for $H \in (\frac{1}{2}, \frac{3}{4})$. Then in \cite{hu nua zhou 19}, these results were generalized for $H \in (0, 1)$. We would like to mention some work for the non-ergodic case as well, i.e., $\theta<0$. For the Brownian motion case, MLE was studied in \cite{bs83}, \cite{dk03} and the limiting distribution is Cauchy. The least squares estimation in the case of fractional Brownian motion and other Gaussian processes was considered in \cite{beo11}, \cite{eeo16}, \cite{mendy 13} and the references therein. Recently, the MLE in the case of sub-fractional Brownian motion case was investigated in \cite{dmm11}. In this paper, we would like to discuss  the case where $\theta>0$ and the noise is a general Gaussian process $(G_t)_{t\ge 0}$ that fails to be self-similar or have stationary increments. We assume that {the process $G_t$} satisfies the following Hypothesis \ref{hypthe 1}.

\begin{hyp}\label{hypthe 1}
For $\beta\in (\frac12, \,1)$, the covariance function $ R(t,\, s)=\mathbb{E}[G_t G_s]$ for any $t\neq s \in [0,\infty)$ satisfies
\begin{align*}
\frac{\partial^2}{\partial t\partial s}R(t,s)= C_{\beta}\abs{t-s}^{2\beta -2}+\Psi(t,\,s),
\end{align*}with 
\begin{align*}
\abs{\Psi(t,\,s)}\le C'_{\beta} \abs{ts}^{\beta -1},
\end{align*} where the constants $\beta,\,C_{\beta}>0,\,C'_{\beta}\ge 0$ do not depend on $T$.
Moreover, for any $t \geq 0$, $R(0,t)=0$.
\end{hyp}
We will see that sub-fractional Brownian motion, bi-fractional Brownian motion and some other Gaussian processes are special examples to satisfy the Hypothesis \ref{hypthe 1}. Recall that the idea to construct the least squares estimator (LSE) for the drift coefficient $\theta$ is to minimize 
  $$\int_0^T |\dot X_t + \theta X_t|^2 dt$$
(see \cite{hu Nua 10}, \cite{hu nua zhou 19})\,. In this way, we obtain the LSE defined by
\begin{equation}\label{hattheta}
\hat{\theta}_T=-\frac{\int_0^T X_t\mathrm{d}X_t}{\int_0^T X_t^2\mathrm{d} t}=\theta-\frac{\int_0^T X_t\mathrm{d}G_t}{\int_0^T X_t^2\mathrm{d} t},
\end{equation}where the integral with respect to $G$ is interpreted in the Skorohod sense (or say a divergence-type integral).

We will also study the second moment estimator that is given by 
\begin{align}\label{theta tilde formula}
\tilde{\theta}_{T}=\Big( \frac{1}{C_{\beta} \Gamma(2\beta-1) T} \int_0^T X_t^2\mathrm{d} t \Big)^{-\frac{1}{2\beta}}.
\end{align}
In this paper, we will prove the strong consistency and the central limit theorems for the two estimators. The Berry-Ess\'{e}en bounds will be also obtained. These results are stated in the following theorems.

\begin{thm}\label{thm strong}
When Hypothesis~\ref{hypthe 1} is satisfied, both the least squares estimator $\hat{\theta}$ and the  second moment estimator  $\tilde{\theta}_{T}$ are strongly consistent, i.e., 
\begin{align*}
\lim_{T\to\infty}\hat{\theta}_T=\theta,\qquad \lim_{T\to\infty}\tilde{\theta}_T=\theta, \qquad a.s..
\end{align*}
\end{thm}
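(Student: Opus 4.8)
The plan is to establish the strong consistency through the asymptotic behavior of the quadratic functional $\frac{1}{T}\int_0^T X_t^2\,\mathrm{d}t$ as $T\to\infty$. First I would derive the explicit representation of the solution to \eqref{fOU}, namely $X_t = \int_0^t e^{-\theta(t-s)}\,\mathrm{d}G_s$, and compute the mean of $\frac{1}{T}\int_0^T X_t^2\,\mathrm{d}t$. Using Hypothesis \ref{hypthe 1}, the covariance structure of $X$ splits into a fractional-Brownian-motion part, which contributes the term $C_\beta\Gamma(2\beta-1)\theta^{-2\beta}$ in the limit, and a perturbation controlled by $\abs{ts}^{\beta-1}$, whose contribution to $\mathbb{E}\big[\frac{1}{T}\int_0^T X_t^2\,\mathrm{d}t\big]$ vanishes as $T\to\infty$ because the exponential kernel integrates the $\abs{ts}^{\beta-1}$ bound to something of order $T^{\beta-1}=o(1)$ after dividing by $T$. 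Hence
\begin{align*}
\lim_{T\to\infty}\mathbb{E}\Big[\frac{1}{T}\int_0^T X_t^2\,\mathrm{d}t\Big]=C_\beta\Gamma(2\beta-1)\theta^{-2\beta}=:\sigma_\theta^2.
\end{align*}

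Next I would upgrade this convergence in mean to an almost sure convergence. The standard route is to show that $\frac{1}{T}\int_0^T X_t^2\,\mathrm{d}t$ lives in a fixed finite sum of Wiener chaoses (here the second chaos), to estimate its variance and show $\mathrm{Var}\big(\frac{1}{T}\int_0^T X_t^2\,\mathrm{d}t\big)=O(T^{-1})$ or at least summable along a discrete sequence $T_n=n$, and then to control the fluctuation of $t\mapsto\frac{1}{t}\int_0^t X_s^2\,\mathrm{d}s$ between consecutive integers using the monotonicity of $\int_0^t X_s^2\,\mathrm{d}s$ together with a Borel–Cantelli argument. The variance estimate is where Hypothesis \ref{hypthe 1} is used again: expanding the variance via the product formula for multiple integrals produces inner products in $\mathfrak{H}$, which by the assumed decomposition are bounded by the corresponding fractional-Brownian-motion inner products (already understood in the literature, e.g. \cite{hu nua zhou 19}) plus manageable cross terms coming from $\Psi$. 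This yields $\frac{1}{T}\int_0^T X_t^2\,\mathrm{d}t\to\sigma_\theta^2$ a.s., which immediately gives $\tilde\theta_T\to\big(\frac{1}{C_\beta\Gamma(2\beta-1)}\sigma_\theta^2\big)^{-1/(2\beta)}=\theta$ a.s. by the continuity of $x\mapsto x^{-1/(2\beta)}$.

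For the least squares estimator, I would write, using \eqref{hattheta} and the identity $X_T^2 = 2\int_0^T X_t\,\mathrm{d}X_t + [\text{bracket term}]$ (or more precisely the Itô–Skorohod formula $\int_0^T X_t\,\mathrm{d}X_t = \tfrac12 X_T^2 - \tfrac12 \mathbb{E}[\text{trace term}]$ adapted to the divergence integral), that
\begin{align*}
\hat\theta_T-\theta=-\frac{\int_0^T X_t\,\mathrm{d}G_t}{\int_0^T X_t^2\,\mathrm{d}t}.
\end{align*}
The denominator grows like $\sigma_\theta^2\,T$ a.s. by the previous step. For the numerator, I would express $\int_0^T X_t\,\mathrm{d}G_t$ as a divergence integral, show it is a second-chaos random variable (plus a trace term of order $o(T)$), estimate its variance to be $O(T)$, and conclude by a Borel–Cantelli/monotonicity argument along $T_n=n$ that $\frac{1}{T}\int_0^T X_t\,\mathrm{d}G_t\to 0$ a.s. Dividing numerator by denominator gives $\hat\theta_T\to\theta$ a.s.

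The main obstacle I anticipate is the variance control of the second-chaos functionals under the weaker Hypothesis \ref{hypthe 1}: unlike the fractional Brownian motion case, $G$ is neither self-similar nor has stationary increments, so the inner product on $\mathfrak{H}$ is not diagonalized by any convenient transform. The key technical device, as the abstract indicates, is to represent the $\mathfrak{H}$-inner product explicitly, dominate it by the fractional-Brownian-motion inner product plus the $\Psi$-contribution $\lesssim\abs{ts}^{\beta-1}$, and then reuse the known sharp estimates for the fractional case. Once those inner-product bounds are in place, the variance decay and the Borel–Cantelli step are routine.
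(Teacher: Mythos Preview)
Your overall architecture matches the paper's: decompose $\frac{1}{T}\int_0^T X_t^2\,\mathrm{d}t$ into its expectation $b_T$ plus a second-chaos term, show $b_T\to C_\beta\Gamma(2\beta-1)\theta^{-2\beta}$ via the kernel splitting in Hypothesis~\ref{hypthe 1}, and then kill the chaos part almost surely. Your identification of the key technical device---dominating the $\mathfrak{H}$-inner product by the fBm inner product plus a $\Psi$-contribution controlled by $(ts)^{\beta-1}$---is exactly what the paper does. But there are two genuine gaps in how you propose to pass to almost-sure convergence.

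First, the variance bound you propose is not strong enough for Borel--Cantelli. The paper writes the chaos part as $\frac{1}{2\theta T}(F_T-H_T)$ with $F_T=I_2(f_T)$, and one finds $\mathbb{E}[F_T^2]\asymp T$ for $\beta\in(\tfrac12,\tfrac34)$, $\asymp T\log T$ at $\beta=\tfrac34$, and $\asymp T^{4\beta-2}$ for $\beta\in(\tfrac34,1)$. In every case $\mathrm{Var}(F_n/n)$ is \emph{not} summable over integers $n$ (it is of order $n^{-1}$ or worse), so a second-moment Chebyshev bound gives nothing. The paper invokes hypercontractivity of the second Wiener chaos to pass to fourth (or, when $\beta>\tfrac34$, to $p$-th with $p>\frac{1}{2(1-\beta)}$) moments, which does make the Chebyshev bound summable. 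Second, your monotonicity interpolation between integers is valid---and in fact simpler than the paper's argument---for $\tilde\theta_T$, since $t\mapsto\int_0^t X_s^2\,\mathrm{d}s$ is nondecreasing, but it does \emph{not} apply to the Skorohod integral $\int_0^T X_t\,\mathrm{d}G_t=\tfrac12 F_T$ in the numerator of $\hat\theta_T-\theta$, which is not monotone. The paper handles this by proving a uniform increment bound $\mathbb{E}|F_t-F_s|^2\le C|t-s|^{\alpha}$ with $C$ independent of $T$, and then applying the Garsia--Rodemich--Rumsey inequality to control $\sup_{t\in[n,n+1]}|F_t-F_n|$ pathwise. Some oscillation estimate of this kind is unavoidable for $\hat\theta_T$.
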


\begin{thm}\label{main thm 2}
Assume $\beta\in (\frac12,\,\frac34)$ and Hypothesis~\ref{hypthe 1} is satisfied. Then, both $\sqrt{T}( \hat{\theta}_T-\theta )$ and $\sqrt{T}( \tilde{\theta}_T-\theta ) $ are asymptotically normal as $T\to \infty$. Namely,
\begin{align}
\sqrt{T}( \hat{\theta}_T-\theta )& \stackrel{ {law}}{\to}  { \mathcal{N}(0, \,\theta \sigma_{\beta}^2)},\label{asy norm LSE}\\
\sqrt{T}( \tilde{\theta}_T-\theta )&\stackrel{ {law}}{\to}  \mathcal{N}(0, \,\theta \sigma_{\beta}^2 /4\beta^2),\label{asy norm SME}
\end{align}where 
\begin{align*}
\sigma_{\beta}^2= (4\beta-1) \big( 1+\frac{\Gamma(3-4\beta)\Gamma(4\beta-1)}{\Gamma(2\beta)\Gamma(2-2\beta)}\big).
\end{align*}
\end{thm}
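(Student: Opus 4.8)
The plan is to reduce both central limit theorems to a single statement about the quadratic functional $F_T := \frac{1}{\sqrt{T}}\int_0^T X_t^2\,\mathrm{d}t$ (suitably recentered), and then apply the fourth moment theorem / Nourdin--Peccati machinery on the second Wiener chaos. First I would recall that $X_t = \int_0^t e^{-\theta(t-s)}\,\mathrm{d}G_s$, so that $\int_0^T X_t^2\,\mathrm{d}t$ is an element of the second chaos generated by $G$, expressible as a double divergence integral $I_2(f_T)$ plus its mean. The key analytic input is Hypothesis \ref{hypthe 1}: writing the inner product of $\mathfrak{H}$ as the fBm inner product plus a perturbation controlled by $\abs{ts}^{\beta-1}$, one shows that all the relevant inner products $\scal{\mathbf{1}_{[0,t]},\mathbf{1}_{[0,s]}}_{\mathfrak{H}}$, $\scal{e^{-\theta(t-\cdot)}\mathbf{1}_{[0,t]}, e^{-\theta(s-\cdot)}\mathbf{1}_{[0,s]}}_{\mathfrak{H}}$ differ from their fBm counterparts by terms that are integrable enough over $[0,T]^2$ to be negligible after dividing by $T$. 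This is presumably done in the earlier sections of the paper; here I would simply invoke those estimates.

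The core steps are then: (i) establish the $L^2$ (or a.s., via Theorem \ref{thm strong}) convergence $\frac{1}{T}\int_0^T X_t^2\,\mathrm{d}t \to C_\beta\Gamma(2\beta-1)\theta^{-2\beta} =: \mu_\beta$, which identifies the normalizing constant and handles the denominators; (ii) show that the variance of $F_T$ converges, $\mathrm{Var}(F_T)\to \Sigma_\beta^2$ for an explicit $\Sigma_\beta^2$, where the fBm part contributes the factor $(4\beta-1)\big(1+\frac{\Gamma(3-4\beta)\Gamma(4\beta-1)}{\Gamma(2\beta)\Gamma(2-2\beta)}\big)$ through the classical computation (this is where $\beta<\tfrac34$ is essential, since otherwise the variance blows up faster than $T$); (iii) verify that the fourth cumulant of $F_T$ tends to zero, equivalently $\norm{f_T\otimes_1 f_T}_{\mathfrak{H}^{\otimes 2}}\to 0$, which by the Nourdin--Peccati theorem upgrades the variance convergence to a CLT $F_T - \mathbb{E}F_T \stackrel{law}{\to}\mathcal{N}(0,\Sigma_\beta^2)$. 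For the LSE, I would write $\sqrt{T}(\hat\theta_T-\theta) = -\frac{\frac{1}{\sqrt T}\int_0^T X_t\,\mathrm{d}G_t}{\frac{1}{T}\int_0^T X_t^2\,\mathrm{d}t}$, express the numerator via the relation $\int_0^T X_t\,\mathrm{d}G_t = \frac12\big(X_T^2 - \text{(a bounded-variance drift term)}\big) - \theta\int_0^T X_t^2\,\mathrm{d}t$ coming from Itô's formula for the divergence integral, reduce it to the same chaos quantity plus negligible remainders (using that $X_T^2/\sqrt T \to 0$ in $L^2$), and apply Slutsky with the denominator from step (i). For the SME, I would Taylor-expand the map $x\mapsto (x/(C_\beta\Gamma(2\beta-1)))^{-1/2\beta}$ around $\mu_\beta$: the delta method gives $\sqrt{T}(\tilde\theta_T-\theta) \approx -\frac{1}{2\beta}\theta\,\mu_\beta^{-1}\cdot\frac{1}{\sqrt T}\big(\int_0^T X_t^2\,\mathrm{d}t - T\mu_\beta\big)$, so the limit is $\mathcal{N}(0,\theta\sigma_\beta^2/4\beta^2)$ once the variance is identified, with the remainder controlled by strong consistency.

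I expect the main obstacle to be step (ii)--(iii): controlling the contraction norm $\norm{f_T\otimes_1 f_T}$ and the limiting variance uniformly in the presence of the non-stationary perturbation $\Psi$. The fBm-only case is classical, but here one must show that cross terms between the fBm kernel $\abs{t-s}^{2\beta-2}$ and the perturbation, as well as the perturbation-squared terms, contribute $o(T)$ to the variance and $o(1)$ to the contraction — this requires careful splitting of the integration region $[0,T]^2$ into a diagonal strip $\abs{t-s}\le 1$ and its complement, together with the bound $\abs{\Psi(t,s)}\le C'_\beta(ts)^{\beta-1}$ and the exponential decay from the Ornstein--Uhlenbeck kernel. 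A secondary technical point is justifying that the boundary term $X_T^2/\sqrt T$ and the deterministic drift correction in the Itô formula are genuinely negligible; both follow from $\mathbb{E}[X_T^2] = O(1)$ under Hypothesis \ref{hypthe 1}, which in turn uses $R(0,t)=0$ and the integrability of the mixed derivative near the diagonal.
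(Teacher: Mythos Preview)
Your proposal is correct and follows essentially the same route as the paper: the fourth moment theorem applied to the second-chaos kernel $f_T(s,t)=e^{-\theta|s-t|}$, with the perturbation $\Psi$ from Hypothesis~\ref{hypthe 1} shown to contribute only lower-order terms, then Slutsky for $\hat\theta_T$ and the delta method for $\tilde\theta_T$. Two small points of divergence: your It\^o relation has a sign slip (the correct identity is $\int_0^T X_t\,\mathrm{d}G_t=\tfrac12(X_T^2-\E X_T^2)+\theta\int_0^T(X_t^2-\E X_t^2)\,\mathrm{d}t$), and the paper in fact bypasses this detour by observing directly that $\int_0^T X_t\,\mathrm{d}G_t=\tfrac12 I_2(f_T)$ and proving the CLT for $F_T/\sqrt{T}$ first; also, rather than a diagonal-strip decomposition, the paper controls the $\Psi$-contributions systematically via the inequalities $\bigl|\norm{\varphi}_{\FH^{\otimes2}}^2-\norm{\varphi}_{\FH_1^{\otimes2}}^2\bigr|\le\norm{\varphi}_{\FH_2^{\otimes2}}^2+2C_\beta'\norm{\tensor{K}\varphi}_{\FH_1}^2$ together with the pointwise bound $(\tensor{K} f_T)(r)\le Cr^{\beta-1}$.
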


\begin{thm}\label{B-E bound thm}
Let $Z$ be a standard Gaussian random variable.
Assume $\beta\in (\frac12,\,\frac34)$ and Hypothesis~\ref{hypthe 1} is satisfied. Then, there exists a constant $C_{\theta, \beta} > 0$ such that when $T$ is large enough, 
\begin{equation}\label{b-e bound 34}
\sup_{z\in \Rnum}\abs{P(\sqrt{\frac{T}{\theta \sigma^2_{\beta}}} (\hat{\theta}_T-\theta )\le z)-P(Z\le z)}\le\frac{ C_{\theta, \beta}}{{T^{\gamma}}},
\end{equation}
and 
\begin{equation}\label{b-e bound 44}
\sup_{z\in \Rnum}\abs{P(\sqrt{\frac{4\beta^2 T}{\theta \sigma^2_{\beta}}} (\tilde{\theta}_T-\theta )\le z)-P(Z\le z)}\le\frac{ C_{\theta, \beta}}{{T^{\frac{3-4\beta}{2}}}},
\end{equation}
where 
\begin{equation*}
\gamma=\left\{
      \begin{array}{ll}
 \frac12, & \quad \text{if }  \beta\in (\frac12,\,\frac58),\\
 {  \frac12 -,} & \quad \text{if } \beta = \frac58, \\
3-4\beta, &\quad \text{if } \beta\in (\frac58,\, \frac34).
  \end{array}
\right.
\end{equation*}
\end{thm}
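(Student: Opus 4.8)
The plan is to derive both Berry--Ess\'{e}en bounds from a single analytic engine: a quantitative fourth-moment / Stein-type estimate applied to the Skorohod integral $F_T := \frac{1}{\sqrt T}\int_0^T X_t\,\mathrm{d}G_t$, which lives in the second Wiener chaos over $\mathfrak H$. By the representation \eqref{hattheta}, $\sqrt T(\hat\theta_T-\theta) = -F_T \big/ \big(\frac1T\int_0^T X_t^2\,\mathrm{d}t\big)$, and Theorem~\ref{thm strong} already gives $\frac1T\int_0^T X_t^2\,\mathrm{d}t \to \theta^{-1}C_\beta\Gamma(2\beta-1)$ a.s.\ (and in $L^2$, with a rate). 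So the first step is to insert this decomposition and control the denominator: using the triangle inequality for the Kolmogorov distance, I would write $\sqrt{T/(\theta\sigma_\beta^2)}(\hat\theta_T-\theta)$ as (a normalized $F_T$) times (a ratio converging to $1$), and bound the error by $d_{\mathrm{Kol}}(\tilde F_T, Z)$ plus a term measuring the $L^2$-fluctuation of the denominator around its limit. The latter is $O(T^{-1/2})$ for $\beta<5/8$ and $O(T^{3-4\beta})$ near $3/4$, matching the stated $\gamma$; this is where the case split in $\gamma$ comes from.

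The core step is the Berry--Ess\'{e}en bound for the normalized second-chaos variable. Here I would invoke the standard result (Nourdin--Peccati) that for $F$ in a fixed chaos with unit variance, $d_{\mathrm{Kol}}(F,Z)\le C\sqrt{|\mathbb E[F^4]-3|}$, or equivalently a bound in terms of $\|D F\|^2$; and then reduce $\mathbb E[F_T^4]-3(\mathbb E[F_T^2])^2$ to contractions of the kernel $f_T \in \mathfrak H^{\odot 2}$ representing $F_T$. The kernel is essentially $f_T(u,v) \sim \frac1{\sqrt T}\,e^{-\theta|u-v|}$ restricted to $[0,T]^2$, plus boundary terms. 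Computing $\|f_T\otimes_1 f_T\|_{\mathfrak H^{\otimes 2}}$ requires estimating fourfold integrals of $e^{-\theta(\cdots)}$ against the kernel $\frac{\partial^2}{\partial t\partial s}R$. This is exactly where Hypothesis~\ref{hypthe 1} is used: the $C_\beta|t-s|^{2\beta-2}$ part reproduces the fractional-Brownian-motion computation (whose contraction norm is known to be $O(T^{-1/2})$ for $\beta<5/8$ and $O(T^{3-4\beta})$ for $\beta\in(5/8,3/4)$, with a logarithmic correction at $5/8$), and the perturbation $\Psi$, bounded by $C'_\beta(ts)^{\beta-1}$, contributes a strictly smaller-order term because $(ts)^{\beta-1}$ is integrable against $e^{-\theta(\cdot)}$ uniformly and, after normalization by $1/\sqrt T$ squared and the chaos algebra, decays at least as fast. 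I expect this perturbation bookkeeping — showing $\Psi$ never dominates in any of the three regimes — to be the main technical obstacle, and it will lean on the inner-product estimates for $\mathfrak H$ advertised in the abstract and presumably proved earlier in the paper.

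For the second moment estimator, the strategy is to linearize. Writing $Q_T := \frac{1}{C_\beta\Gamma(2\beta-1)T}\int_0^T X_t^2\,\mathrm{d}t$, we have $\tilde\theta_T = Q_T^{-1/(2\beta)}$ and $Q_T \to \theta^{-2\beta}$ a.s. A delta-method expansion gives $\sqrt T(\tilde\theta_T-\theta) = -\frac{1}{2\beta}\theta^{1+2\beta}\sqrt T(Q_T-\theta^{-2\beta}) + (\text{remainder})$, and $\sqrt T(Q_T-\theta^{-2\beta})$ is, up to constants, the same second-chaos object $\tilde F_T$ that drove the LSE analysis (this is why the limiting variances in \eqref{asy norm LSE}--\eqref{asy norm SME} differ only by the factor $4\beta^2$). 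So \eqref{b-e bound 44} follows by combining the chaos Berry--Ess\'{e}en bound for $\sqrt T(Q_T-\theta^{-2\beta})$ — again $O(T^{-(3-4\beta)/2})$ after the square-root is applied to the fourth-cumulant bound, matching the stated exponent — with a bound on the Taylor remainder. The remainder is controlled by $\mathbb E|Q_T-\theta^{-2\beta}|^2$, which is $O(T^{-1})$ and hence dominated by the main $O(T^{-(3-4\beta)/2})$ term since $3-4\beta<1$ on $(\tfrac12,\tfrac34)$; one also needs a crude large-deviation or moment bound to ensure $Q_T$ stays in a region where $x\mapsto x^{-1/(2\beta)}$ is smooth, which follows from hypercontractivity in the fixed chaos. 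Assembling these pieces, with the constant $C_{\theta,\beta}$ absorbing $\theta$-dependent factors from the denominator limit and the delta-method, yields both \eqref{b-e bound 34} and \eqref{b-e bound 44}.
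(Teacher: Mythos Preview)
Your overall architecture --- fourth-moment/contraction bounds in the second chaos, with Hypothesis~\ref{hypthe 1} used to split the $\mathfrak{H}$-inner product into an fBm piece plus a controllable $(ts)^{\beta-1}$ perturbation --- matches the paper's engine (Proposition~\ref{contraction ft} together with the norm comparisons \eqref{norm.ineq2}--\eqref{inner product 00.ineq}). The paper handles the ratio structure for $\hat\theta_T$ differently, though: rather than your product decomposition plus a Kolmogorov triangle inequality, it applies a ready-made result of Kim--Park (Theorem~\ref{kp}) that directly bounds $d_{\mathrm{Kol}}\bigl(I_2(\varphi_T)/(I_2(\psi_T)+b_T),Z\bigr)$ by a maximum of quantities involving $\|\varphi_T\otimes_1\varphi_T\|$, $\|\psi_T\|$, $\langle\varphi_T,\psi_T\rangle$, etc. This packages your numerator and denominator control into one step, but your route is viable.

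There is, however, a genuine gap in your treatment of $\tilde\theta_T$. You assert that $\sqrt T(Q_T-\theta^{-2\beta})$ is, up to constants, a pure second-chaos element, and that the rate $T^{-(3-4\beta)/2}$ arises from the square root of its fourth cumulant. Neither is right. Since $\frac1T\int_0^T X_t^2\,\mathrm{d}t = I_2(g_T)+b_T$ (equation~\eqref{x square}), one has $\sqrt T(Q_T-\theta^{-2\beta}) = \text{const}\cdot\bigl(\sqrt T\,I_2(g_T) + \sqrt T\,(b_T-a)\bigr)$ with $a=C_\beta\Gamma(2\beta-1)\theta^{-2\beta}$: there is a \emph{deterministic bias} $\sqrt T(b_T-a)$. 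By Remark~\ref{rem 36}, $|b_T-a|\le C\,T^{-2(1-\beta)}$, so $\sqrt T\,|b_T-a|\le C\,T^{-(3-4\beta)/2}$, and this shift is precisely the bottleneck giving the exponent in \eqref{b-e bound 44}. The chaotic part actually satisfies a Berry--Ess\'{e}en bound of order $T^{-\gamma}$ (Proposition~\ref{cor b-e 2}), which is strictly better. Relatedly, your claim that the case split in $\gamma$ originates in the denominator's $L^2$-fluctuation is misplaced: that fluctuation is $O(T^{-1/2})$ uniformly over $\beta\in(\tfrac12,\tfrac34)$; the split comes entirely from the contraction estimate $\tfrac1T\|f_T\otimes_1 f_T\|_{\mathfrak{H}^{\otimes 2}}$, as you in fact note correctly in your second paragraph. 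If you retain the bias term and track it through either your delta-method linearization or, as the paper does, a direct CDF manipulation via Lemma~\ref{bound lem 54}, the argument closes with the stated exponent.
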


Next, we give some well known processes that satisfy the Hypothesis ~\ref{hypthe 1}. 
\begin{ex}
 Clearly the fractional Brownian motion $\{B^H(t), t\geq 0\}$ with covariance function
    $$R(s,t)= \frac{1}{2}(|s|^{2H} + |t|^{2H} - |t-s|^{2H}),$$
satisfies Hypothesis~\ref{hypthe 1} when $\beta := H>\frac12$. In this case, the upper Berry-Ess\'{e}en bound (\ref{b-e bound 44}) can be improved to  $\frac{ C }{{T^{\gamma}}}$ from the proof of Theorem~\ref{B-E bound thm} and Remark~\ref{rem 36} in this paper. This improved upper Berry-Ess\'{e}en bound is sharper than the one given by Proposition 4.1 (ii) of \cite{SV 18}.
\end{ex}
\begin{ex}
The subfractional Brownian motion $\{S^H(t), t \geq 0\}$ with parameter $H\in (0,1)$ has the covariance function
$$R(t,s)=s^{2H}+t^{2H}-\frac{1}{2}\left((s+t)^{2H}+|t-s|^{2H}\right),$$
which satisfies Hypothesis~\ref{hypthe 1} when $\beta := H>\frac12$. 
\end{ex}
This answers the unsolved problem in \cite{cai xiao 18} where the strong  consistency is unknown for sub-fractional Brownian motion.
\begin{ex}
The bi-fractional Brownian motion $\{B^{H,K}(t), t\geq 0\}$ with parameters $H,K \in (0, 1)$ has the covariance function
$$R(t,s)=\frac{1}{2^K}\left((s^{2H}+t^{2H})^K - |t-s|^{2HK}\right) ,$$
which satisfies Hypothesis~\ref{hypthe 1} when $\beta := HK>\frac12$. \end{ex}

\begin{ex}\label{exmp5}
The generalized sub-fractional Brownian motion $S^{H,K}(t) $ with parameters $H \in (0, 1),\,K \in[1,2)$ and $HK\in (0,1)$ satisfies Hypothesis~\ref{hypthe 1} when $\beta := HK>\frac12$. The covariance function 
is 
$$ R(t,\, s)= (s^{2H}+t^{2H})^{K}-\frac12 \big[(t+s)^{2HK} + \abs{t-s}^{2HK} \big]$$
(see \cite{SghA 13}). 
\end{ex}

\begin{remark}
If the SDE is driven by a linear combination of independent centered Gaussian processes, the results are still valid as long as each Gaussian process satisfies Hypothesis~\ref{hypthe 1}. In this case, the mixed Gaussian process fails to be self-similar.
\end{remark}

\section{Preliminary}
Denote $G = \set{G_t, t\in [0,T ]}$ as a continuous centered Gaussian process with covariance function
    $$ \E(G_tG_s)=R(s,t), \ s, t \in [0,T], $$
defined on a complete probability space $(\Omega, \mathcal{F}, P)$. The filtration $\mathcal{F}$ is generated by the Gaussian family $G$. Suppose in addition that the covariance function $R$ is continuous.
Let $\mathcal{E}$ denote the space of all real valued step functions on $[0,T]$. The Hilbert space $\mathfrak{H}$ is defined
as the closure of $\mathcal{E}$ endowed with the inner product
\begin{align*}
\innp{\mathbbm{1}_{[a,b)},\,\mathbbm{1}_{[c,d)}}_{\FH}=\E\big(( G_b-G_a) ( G_d-G_c) \big).
\end{align*}
We denote $G=\{G(h), h \in \mathfrak{H}\}$ as the isonormal Gaussian process on the probability space $(\Omega, \mathcal{F}, P)$, indexed by the elements in the Hilbert space $\mathfrak{H}$. In other words, $G$ is a Gaussian family of random variables such that 
   $$\mathbb{E}(G) = \mathbb{E}(G(h)) = 0, \quad \mathbb{E}(G(g)G(h)) = \langle g, h \rangle_{\mathfrak{H}} \,,$$
for any $g, h \in \mathfrak{H}$. 

The following proposition is an extension of Theorem 2.3 of \cite{Jolis 07},  which gives the inner product's  representation of the Hilbert space $\FH$.
\begin{prop}\label{main prelim}
Denote $\mathcal{V}_{[0,T]}$ as the set of bounded variation functions on $[0,T]$. Then $\mathcal{V}_{[0,T]}$ is dense in $\FH$  and we have
\begin{align} \label{innp fg30}
\innp{f,g}_{\FH}=\int_{[0,T]^2} R(t,s) \nu_f( \dif t) \nu_{g}( \dif s),\qquad \forall f,\, g\in \mathcal{V}_{[0,T]}, 
\end{align}
where $\nu_{g}$ is the Lebesgue-Stieljes signed measure associated with $g^0$ defined as
\begin{equation*}
g^0(x)=\left\{
      \begin{array}{ll}
 g(x), & \quad \text{if } x\in [0,T] ;\\
0, &\quad \text{otherwise }.     
 \end{array}
\right.
\end{equation*}
Furthermore, if the covariance function $R(t,s)$ satisfies Hypothesis~\ref{hypthe 1}, then
\begin{align}\label{innp fg3}
\innp{f,g}_{\FH}= \int_{[0,T]^2}  f(t) g(s) \frac{\partial^2 R(t,s)}{\partial t \partial s} \dif t  \dif s, \qquad \forall f,\, g\in \mathcal{V}_{[0,T]}.
\end{align}
\end{prop}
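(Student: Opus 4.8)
The plan is to follow the argument behind Theorem~2.3 of \cite{Jolis 07}: establish \eqref{innp fg30} first on step functions, extend it to all of $\mathcal{V}_{[0,T]}$ by an approximation that uses only the continuity of $R$ on $[0,T]^2$, and finally deduce \eqref{innp fg3} from \eqref{innp fg30} via Fubini's theorem under Hypothesis~\ref{hypthe 1}. The point that goes beyond \cite{Jolis 07} is that we do not assume $R$ to have finite two-dimensional (Vitali) variation, only that it is continuous — which is what we are left with since the processes of interest are neither self-similar nor have stationary increments. For step functions $f=\sum_i a_i\mathbbm{1}_{[t_i,t_{i+1})}$, $g=\sum_j b_j\mathbbm{1}_{[s_j,s_{j+1})}$ on a common partition, the definition of $\innp{\cdot,\cdot}_{\FH}$ gives $\innp{f,g}_{\FH}=\sum_{i,j}a_ib_j\,\E\big[(G_{t_{i+1}}-G_{t_i})(G_{s_{j+1}}-G_{s_j})\big]$, a double sum of rectangular increments of $R$; since $\nu_f$ is the atomic measure carried by the jumps of $f^0$ at the partition points (including the drop $-f(T^-)$ at $T$), an Abel summation by parts identifies $\int_{[0,T]^2}R\,\dif\nu_f\,\dif\nu_g$ with exactly that double sum, proving \eqref{innp fg30} on $\mathcal E$.

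To pass to $f,g\in\mathcal{V}_{[0,T]}$, approximate $f$ by the step functions $f_N$ obtained by sampling $f$ along partitions of vanishing mesh. Then $f_N\to f$ at every continuity point of $f$, $\nu_{f_N}\rightharpoonup\nu_f$ weakly, and $\abs{\nu_{f_N}}([0,T])$ stays bounded by the total variation of $f$, uniformly in $N$. Since $R$ is uniformly continuous and bounded on the compact square $[0,T]^2$, the bilinear form $(f,g)\mapsto\int_{[0,T]^2}R\,\dif\nu_f\,\dif\nu_g$ is continuous along such approximations; applying this to the differences $f_N-f_M$ shows that $(f_N)_N$ is Cauchy in $\FH$, so $f$ determines an element of $\FH$ that is independent of the partitions used, and letting $N\to\infty$ in \eqref{innp fg30} (valid for the $f_N$) gives \eqref{innp fg30} for all $f,g\in\mathcal{V}_{[0,T]}$. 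Density of $\mathcal{V}_{[0,T]}$ in $\FH$ is then immediate, because $\mathcal E\subset\mathcal{V}_{[0,T]}\subset\FH$ and $\mathcal E$ is dense in $\FH$ by construction.

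For the last assertion, note that under Hypothesis~\ref{hypthe 1} the kernel $\frac{\partial^2 R}{\partial t\partial s}(t,s)=C_\beta\abs{t-s}^{2\beta-2}+\Psi(t,s)$ lies in $L^1([0,T]^2)$: $\abs{t-s}^{2\beta-2}$ is integrable because $2\beta-2>-1$, and $\abs{\Psi(t,s)}\le C'_\beta\abs{ts}^{\beta-1}$ is integrable because $\beta-1>-1$. Combined with $R(0,\cdot)=R(\cdot,0)=0$, this yields the representation $R(t,s)=\int_0^t\!\int_0^s\frac{\partial^2 R}{\partial u\partial v}(u,v)\,\dif u\,\dif v$. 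Inserting this into \eqref{innp fg30}, and using that $\abs{\nu_f}$, $\abs{\nu_g}$ are finite while the kernel is in $L^1$, Fubini's theorem applies; the inner iterated integral then collapses via $\int_{[0,T]}\mathbbm{1}_{\{u\le t\}}\,\nu_f(\dif t)=\nu_f([u,T])=-f(u)$ for a.e.\ $u$, leaving $\int_{[0,T]^2}f(t)g(s)\,\frac{\partial^2 R}{\partial t\partial s}(t,s)\,\dif t\,\dif s$, which is \eqref{innp fg3}.

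The main obstacle is the approximation in the second step. Because one works with signed measures $\nu_f$ while assuming no more than continuity of $R$ (there being no self-similarity or stationarity to exploit), one must carefully justify the stability of $\int_{[0,T]^2}R\,\dif\nu_f\,\dif\nu_g$ under the weak convergence $\nu_{f_N}\rightharpoonup\nu_f$ with uniformly bounded total variation, and check that the resulting map $\mathcal{V}_{[0,T]}\to\FH$ is well defined. A convenient route is to approximate $R$ uniformly on $[0,T]^2$ by finite sums $\sum_k\varphi_k(t)\psi_k(s)$ (Stone--Weierstrass), reducing to one-dimensional weak convergence and bounding the error by $\norm{R-\sum_k\varphi_k\psi_k}_\infty$ times the (bounded) product of total variations. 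The only other point needing attention is the representation of $R$ as the double primitive of the kernel in Hypothesis~\ref{hypthe 1} used in the last step, which is routine for all the examples given in Section~\ref{sec 03}.
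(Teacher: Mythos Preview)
Your argument is correct and, for the second identity \eqref{innp fg3}, takes a genuinely different route from the paper. The paper simply cites Theorem~2.3 of \cite{Jolis 07} for the density of $\mathcal{V}_{[0,T]}$ and for \eqref{innp fg30}, so your weak-convergence/Stone--Weierstrass argument is a self-contained reproof of what the paper treats as known. For \eqref{innp fg3} the paper runs a second approximation: it first notes that \eqref{innp fg3} holds on $\mathcal{E}$, then for $f\in\mathcal{V}_{[0,T]}$ takes step approximations $f_n$, invokes (A3)--(A4) of \cite{Jolis 07} to get $\innp{f_n,f_n}_{\FH}\to\innp{f,f}_{\FH}$, passes to the limit in the integral by dominated convergence (which Hypothesis~\ref{hypthe 1} justifies since $f$ is bounded and the kernel lies in $L^1([0,T]^2)$), and finishes by polarization. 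Your route---write $R$ as the double primitive of its mixed partial and apply Fubini directly inside \eqref{innp fg30}---is more direct and avoids the second approximation step; its cost is that one must verify the representation $R(t,s)=\int_0^t\!\int_0^s\frac{\partial^2 R}{\partial u\partial v}\,\dif u\,\dif v$, which is not literally contained in Hypothesis~\ref{hypthe 1} (the hypothesis fixes the mixed partial off the diagonal and the boundary value $R(0,\cdot)=0$, but not the absolute continuity of $R$ itself). The paper needs this same fact to assert \eqref{innp fg3} on step functions, so this is not a gap peculiar to your argument; both proofs use it tacitly, and it holds in all the examples of Section~\ref{sec 03}.
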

\begin{proof}
The first claim and the identity (\ref{innp fg30}) are rephrased from Theorem 2.3 of \cite{Jolis 07}. The identity (\ref{innp fg3}) can be shown by the routine approximation. 

In fact, Hypothesis~\ref{hypthe 1} implies that 
\begin{align}\label{innp fg301}
\innp{f,g}_{\FH}= \int_{[0,T]^2}  f(t) g(s) \frac{\partial^2R(t,s)}{\partial t \partial s} \dif t  \dif s, \qquad \forall f,\, g\in  \mathcal{E}.
\end{align}
Next, given $f\in \mathcal{V}_{[0,T]}$ and a sequence of partitions $\pi_n= \set{0 = t_0^n < t_1^n < \cdots < t_{k_n}^n = T } $ such that $\pi_n\subset \pi_{n+1}$ and $\abs{\pi_n}\to 0$ as $n \to \infty$, we consider 
\begin{align*}
f_n=\sum_{j=0}^{{k_n}-1} f(t_j^n) \mathbbm{1}_{[t_j^n,t_{j+1}^n)}\,\, \in \mathcal{E}.
\end{align*} Then (A3) and (A4) of \cite{Jolis 07} imply that 
\begin{align*}
\innp{f,f}_{\FH}&=\lim_{n\to \infty}\innp{f_n,f_n}_{\FH}\\
&= \lim_{n\to \infty}\int_{[0,T]^2}  f_n(t) f_n(s) \frac{\partial^2 R(t,s)}{\partial t \partial s} \dif t  \dif s\\
&=\int_{[0,T]^2}  f(t) f(s) \frac{\partial^2 R(t,s)}{\partial t \partial s} \dif t  \dif s,
\end{align*}where the last equality is by Lebesgue's dominated convergence theorem.
Finally, using the polarization identity, we obtain the desired (\ref{innp fg3}).
\end{proof}


\begin{remark} We define the space of measurable functions by 
\begin{align*}
\abs{\mathfrak{H}}=\set{f:[0,\,T]\to \Rnum, \int_0^{T}\int_0^{T} \abs{f(t)f(s)}\frac{\partial^2 R( t,s)}{ \partial t\partial s } \dif t\dif s<\infty}.
\end{align*}
If Hypothesis~\ref{hypthe 1} is satisfied, we understand that the space $\abs{\mathfrak{H}}$ equipped with the inner product 
  $$\langle f, g \rangle_{\mathfrak{H}} = \int_{[0,T]^2} f(t) g(s)\frac{\partial^2 R(t,s)}{\partial t \partial s} \dif t \dif s,  \qquad \forall f, g \in \abs{\mathfrak{H}}$$
is not complete and it is isometric to a proper subspace of $\mathfrak{H}$ (see the fractional Brownian motion case in \cite{Nua} and the references therein). However, Proposition~\ref{main prelim} is good enough to prove the main results of this paper.
\end{remark}


Denote $\mathfrak{H}^{\otimes p}$ and $\mathfrak{H}^{\odot p}$ as the $p$th tensor product and the $p$th symmetric tensor product of the Hilbert space $\mathfrak{H}$. Let $\mathcal{H}_p$ be the $p$th Wiener chaos with respect to $G$. It is defined as the closed linear subspace of $L^2(\Omega)$ generated by the random variables $\{H_p(G(h)): h \in \mathfrak{H}, \ \|h\|_{\mathfrak{H}} = 1\}$, where $H_p$ is the $p$th Hermite polynomial defined by
$$H_p(x)=\frac{(-1)^p}{p!} e^{\frac{x^2}{2}} \frac{d^p}{dx^p} e^{-\frac{x^2}{2}}, \quad p \geq 1,$$
and $H_0(x)=1$. We have the identity $I_p(h^{\otimes p})=H_p(G(h))$ for any $h \in \mathfrak{H}$ where $I_p(\cdot)$ is the generalized Wiener-It\^o stochastic integral. Then the map $I_p$ provides a linear isometry between $\mathfrak{H}^{\odot p}$ (equipped with the norm $\frac{1}{\sqrt{p!}}\|\cdot\|_{\mathfrak{H}^{\otimes p}}$) and $\mathcal{H}_p$. Here $\mathcal{H}_0 = \mathbb{R}$ and $I_0(x)=x$ by convention.

We choose $\{e_k, k \geq 1\}$ to be a complete orthonormal system in the Hilbert space $\mathfrak{H}$. Given $f \in \mathfrak{H}^{\odot m}, g \in \mathfrak{H}^{\odot n}$, the $q$-th contraction between $f$ and $g$ is an element in $\mathfrak{H}^{\otimes (m+n-2q)}$ that is defined by
\begin{equation*}
 f \otimes_q g = \sum_{i_1,\dots,i_q=1}^{\infty} \langle f,e_{i_1} \otimes \cdots \otimes e_{i_q} \rangle_{\mathfrak{H}^{\otimes q}} \otimes \langle g,e_{i_1} \otimes  \cdots  \otimes e_{i_q} \rangle_{\mathfrak{H}^{\otimes q}} \,,
\end{equation*}
for $q=1,\dots, m \wedge n$.

For $g \in \mathfrak{H}^{\odot p}$ and $h \in \mathfrak{H}^{\odot q}$, we have the following product formula for the multiple integrals,
\begin{equation}\label{ito.prod}
	I_p(g) I_q(h) = \sum_{r=0}^{p \wedge q} r!\binom{p}{r} \binom{q}{r} I_{p+q-2r}(g\tilde\otimes_r h) \,,
\end{equation}
where $g\tilde\otimes_r h$ is the symmetrization of $g\otimes_r h$ (see \cite{Nou 12}).

The following Theorem \ref{fm.theorem}, known as the fourth moment theorem, provides necessary and sufficient conditions for the  convergence of a sequence of random variables to a normal distribution (see \cite{Nua Pec 05}).
\begin{thm} \label{fm.theorem}
 Let $ n \geq 2 $ be a fixed integer. Consider  a collection of elements $\{f_{T}, T>0\}$ such that $f_{T} \in \mathfrak{H}^{\odot n}$ for every $T>0$. Assume further that
 \[
  \lim_{T \to \infty}\mathbb{E} [I_n (f_T) ^2] = \lim_{T \to \infty} n! \|f_T\|^2_{\mathfrak{H}^{\otimes{n}}} = \sigma^2 .
 \]
Then the following conditions are equivalent:
 \begin{enumerate}
	\item $\lim_{T \to \infty} \mathbb{E}[I_n(f_T)^4] = { 3\sigma^4}$.
	\item For every $q=1,\dots, n-1$, $\lim_{T \to \infty}||f_T \otimes_q f_{T}||_{\mathfrak{H}^{\otimes 2(n-q)}} = 0$.
    \item As $T $ tends to infinity, the $n$-th multiple integrals $\{I_n (f_T), T \geq 0\}$ converge in distribution to a Gaussian random variable $N(0,\sigma^2)$.
 \end{enumerate}	
\end{thm}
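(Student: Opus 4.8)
The plan is to establish the cycle of implications $(1)\Rightarrow(2)\Rightarrow(3)\Rightarrow(1)$, which yields the pairwise equivalence of all three conditions. The two analytic engines are the product formula \eqref{ito.prod}, which lets me expand both the fourth moment $\mathbb{E}[I_n(f_T)^4]$ and the squared Malliavin derivative norm into contractions, and the Malliavin--Stein inequality, which converts smallness of contractions into a quantitative bound on the distance to a Gaussian. Throughout write $F_T=I_n(f_T)$ and $\sigma_T^2=n!\,\|f_T\|_{\mathfrak{H}^{\otimes n}}^2$, so that by hypothesis $\sigma_T^2\to\sigma^2$.

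\emph{Step 1: $(1)\Leftrightarrow(2)$ via the fourth moment formula.} Applying \eqref{ito.prod} to the product $I_n(f_T)\cdot I_n(f_T)$ and using the orthogonality of multiple integrals of distinct orders, one obtains a closed expression of the form
\[
\mathbb{E}[F_T^4]-3(n!)^2\|f_T\|_{\mathfrak{H}^{\otimes n}}^4=\sum_{r=1}^{n-1}a_{n,r}\,\|f_T\otimes_r f_T\|_{\mathfrak{H}^{\otimes 2(n-r)}}^2+\sum_{r=1}^{n-1}b_{n,r}\,\|f_T\tilde\otimes_r f_T\|_{\mathfrak{H}^{\otimes 2(n-r)}}^2,
\]
with constants $a_{n,r}>0$ and $b_{n,r}\ge 0$. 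Since $3(n!)^2\|f_T\|^4=3\sigma_T^4\to 3\sigma^4$ and every summand on the right is nonnegative, the right-hand side is squeezed between two positive multiples of $\sum_{r=1}^{n-1}\|f_T\otimes_r f_T\|^2$ (the upper bound using $\|f_T\tilde\otimes_r f_T\|\le\|f_T\otimes_r f_T\|$). Hence $\mathbb{E}[F_T^4]\to 3\sigma^4$ holds if and only if $\|f_T\otimes_q f_T\|\to 0$ for every $q=1,\dots,n-1$, which is exactly condition $(2)$.

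\emph{Step 2: $(2)\Rightarrow(3)$ via Malliavin--Stein; Step 3: $(3)\Rightarrow(1)$ via hypercontractivity.} Let $D$ be the Malliavin derivative and $L=-\delta D$ the Ornstein--Uhlenbeck generator, so that $LF_T=-nF_T$ and $-DL^{-1}F_T=\tfrac1n DF_T$. For a standard Gaussian $N$ the Stein--Malliavin bound gives
\[
d_{TV}\big(F_T,\sigma N\big)\le\frac{2}{\sigma^2}\,\mathbb{E}\Big[\big|\sigma^2-\tfrac1n\|DF_T\|_{\mathfrak{H}}^2\big|\Big]\le\frac{2}{\sigma^2}\Big(|\sigma^2-\sigma_T^2|+\sqrt{\mathrm{Var}\big(\tfrac1n\|DF_T\|_{\mathfrak{H}}^2\big)}\Big).
\]
Since $DF_T=nI_{n-1}(f_T(\cdot))$, a further use of \eqref{ito.prod} expands $\tfrac1n\|DF_T\|_{\mathfrak{H}}^2$ into a sum of multiple integrals whose chaos decomposition yields $\mathbb{E}[\tfrac1n\|DF_T\|_{\mathfrak{H}}^2]=\sigma_T^2$ and $\mathrm{Var}(\tfrac1n\|DF_T\|_{\mathfrak{H}}^2)=\sum_{r=1}^{n-1}c_{n,r}\,\|f_T\otimes_r f_T\|^2$. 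Under $(2)$ this variance vanishes and $|\sigma^2-\sigma_T^2|\to 0$, so the total-variation distance tends to $0$, giving $(3)$. For $(3)\Rightarrow(1)$, note every $F_T$ lies in the fixed $n$-th Wiener chaos with $\sup_T\mathbb{E}[F_T^2]<\infty$; hypercontractivity of the Ornstein--Uhlenbeck semigroup gives $\mathbb{E}[|F_T|^p]\le(p-1)^{np/2}\big(\mathbb{E}[F_T^2]\big)^{p/2}$ for all $p\ge 2$, so $\sup_T\mathbb{E}[F_T^6]<\infty$ and the family $\{F_T^4\}$ is uniformly integrable. Convergence in distribution then upgrades to convergence of fourth moments, whence $\mathbb{E}[F_T^4]\to\mathbb{E}[(\sigma N)^4]=3\sigma^4$, which is $(1)$.

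The main obstacle is the combinatorial core shared by Steps 1 and 2: carrying out the product-formula expansions of $\mathbb{E}[F_T^4]$ and of $\|DF_T\|_{\mathfrak{H}}^2$ and correctly tracking every contraction term with its multiplicity, including the passage between the symmetrized contractions $f_T\tilde\otimes_r f_T$ and the ordinary contractions $f_T\otimes_r f_T$ so that the resulting coefficients are manifestly nonnegative. Once these explicit expansions are in hand, the three implications close up as above and the equivalence follows.
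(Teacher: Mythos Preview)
The paper does not prove Theorem~\ref{fm.theorem}; it is stated as a known result with a citation to Nualart--Peccati \cite{Nua Pec 05} (and implicitly to the monograph \cite{Nou 12}). There is therefore no proof in the paper against which to compare your proposal.

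That said, your outline is a correct sketch of the by-now standard modern proof, essentially the Nourdin--Peccati approach developed in \cite{Nou 12}: the equivalence $(1)\Leftrightarrow(2)$ via the explicit fourth-cumulant formula in terms of contractions, the implication $(2)\Rightarrow(3)$ via the Malliavin--Stein total-variation bound controlled by $\mathrm{Var}(n^{-1}\|DF_T\|_{\mathfrak{H}}^2)$, and the closing implication $(3)\Rightarrow(1)$ via hypercontractivity and uniform integrability. It is worth noting that the original proof in \cite{Nua Pec 05} proceeded differently, using the Dambis--Dubins--Schwarz representation of continuous martingales rather than Stein's method; your route is shorter and yields quantitative bounds, which is why it has become the preferred presentation. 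The combinatorial ``obstacle'' you flag---tracking the coefficients $a_{n,r},b_{n,r},c_{n,r}$ and the passage between $\tilde\otimes_r$ and $\otimes_r$---is genuine but fully worked out in \cite[Chapter~5]{Nou 12}, so in the context of this paper a citation suffices.
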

The following theorem provides an estimate of the Kolmogrov distance between a nonlinear Gaussian functional and the standard normal random variable (see Corollary 1 of \cite{kim 3}).
\begin{thm}\label{kp}
Suppose that $\varphi_T(t,s)$ and $\psi_T(t,s)$ are two functions on $\mathfrak{H}^{\otimes 2}$.
Let $b_T$ be a positive function of $T$ such that $I_2(\psi_T)+b_T>0$ a.s.. Denote the functions $\Psi_i(T)$ as follows,
\begin{align*}
\Psi_1(T)&=\frac{1}{b_T^2}\sqrt{\big[b^2_T-2\norm{\varphi_T}_{\mathfrak{H}^{\otimes 2}}^2\big]^2+8\norm{\varphi_T \otimes_1 \varphi_T}_{\mathfrak{H}^{\otimes 2}}^2},\\
\Psi_2(T)&=\frac{2}{b_T^2}\sqrt{2\norm{\varphi_T \otimes_1 \psi_T}_{\mathfrak{H}^{\otimes 2}}^2+\innp{\varphi_T,\,\psi_T}_{\mathfrak{H}^{\otimes 2}}^2},
\end{align*}
and
$$\Psi_3(T)=\frac{2}{b_T^2}\sqrt{ \norm{\psi_T}_{\mathfrak{H}^{\otimes 2}}^4+2\norm{\psi_T \otimes_1\psi_T}_{\mathfrak{H}^{\otimes 2}}^2}.$$
{ Let $Z$ be a standard normal random variable.} If $\Psi_i(T)\to 0,\,i=1,2,3$ as $T\to \infty$, there exists a constant $c$ such that for $T$ large enough, 
\begin{equation}
\sup_{z\in \Rnum}\abs{P(\frac{I_2(\varphi_T)}{ I_2(\psi_T)+b_T}\le z)-P(Z\le z)}\le c\times \max_{i=1,2,3} \Psi_i(T).
\end{equation}
\end{thm}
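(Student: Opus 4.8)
The plan is to reduce the statement to a quantitative central limit theorem for a single element of the second Wiener chaos, and then to identify the three quantities $\Psi_1,\Psi_2,\Psi_3$ with the three distinct sources of error in that reduction. Throughout write $\Phi(z)=P(Z\le z)$ and $\varepsilon_T=\max_{i=1,2,3}\Psi_i(T)$; by hypothesis $\varepsilon_T\to0$, so we may assume $T$ is large enough that $\varepsilon_T$ is as small as needed. Since $I_2(\psi_T)+b_T>0$ a.s.\ and $I_2$ is linear, for every $z\in\Rnum$
\[
\Big\{\tfrac{I_2(\varphi_T)}{I_2(\psi_T)+b_T}\le z\Big\}=\big\{I_2(\varphi_T)-zI_2(\psi_T)\le z b_T\big\}=\big\{b_T^{-1}I_2(\varphi_T-z\psi_T)\le z\big\}.
\]
Hence it suffices to bound $\sup_{z}\big|P\big(b_T^{-1}I_2(\varphi_T-z\psi_T)\le z\big)-\Phi(z)\big|$, that is, to compare the law of the single second-chaos variable $F_{T,z}:=b_T^{-1}I_2(\varphi_T-z\psi_T)$ with that of $Z$, uniformly in the evaluation point $z$.

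For fixed $z$ I would apply the Kolmogorov-distance Malliavin--Stein estimate underlying Theorem~\ref{fm.theorem}: for any kernel $h\in\FH^{\otimes2}$,
\[
\sup_{w\in\Rnum}\big|P(I_2(h)\le w)-P(Z\le w)\big|\ \le\ C\sqrt{\big(1-2\norm{h}_{\FH^{\otimes2}}^2\big)^2+8\norm{h\otimes_1h}_{\FH^{\otimes2}}^2}.
\]
Inserting $h=b_T^{-1}(\varphi_T-z\psi_T)$, evaluating at $w=z$, and expanding by bilinearity,
\[
2\norm{\varphi_T-z\psi_T}^2=2\norm{\varphi_T}^2-4z\innp{\varphi_T,\psi_T}+2z^2\norm{\psi_T}^2,
\]
\[
(\varphi_T-z\psi_T)\otimes_1(\varphi_T-z\psi_T)=\varphi_T\otimes_1\varphi_T-z\big(\varphi_T\otimes_1\psi_T+\psi_T\otimes_1\varphi_T\big)+z^2\,\psi_T\otimes_1\psi_T,
\]
one groups the contributions by the power of $z$; using $\norm{a+b+c}\le\norm{a}+\norm{b}+\norm{c}$, $|x+y|\le\sqrt2\sqrt{x^2+y^2}$, and $\norm{\varphi_T\otimes_1\psi_T}=\norm{\psi_T\otimes_1\varphi_T}$ one recognises the $z^0$-part as $\le C\Psi_1(T)$, the $z^1$-part as $\le C|z|\Psi_2(T)$, and the $z^2$-part as $\le Cz^2\Psi_3(T)$. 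This already gives, for every $z$,
\[
\Big|P\big(\tfrac{I_2(\varphi_T)}{I_2(\psi_T)+b_T}\le z\big)-\Phi(z)\Big|\ \le\ C\big(\Psi_1(T)+|z|\,\Psi_2(T)+z^2\,\Psi_3(T)\big).
\]

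The remaining, and most delicate, task is to remove the weights $|z|$ and $z^2$ so as to obtain a genuinely uniform bound. I would split $\Rnum=\{|z|\le M_T\}\cup\{|z|>M_T\}$ with a slowly growing cutoff (e.g.\ $M_T\asymp\log(1/\varepsilon_T)$). On $\{|z|\le M_T\}$ I would sharpen the estimate above by comparing $F_{T,z}$ to the centred Gaussian with its \emph{exact} variance $\sigma_{T,z}^2:=2\norm{\varphi_T-z\psi_T}^2/b_T^2$: this splits the error as (Kolmogorov distance of $F_{T,z}/\sigma_{T,z}$ to $Z$) $+\,|\Phi(z/\sigma_{T,z})-\Phi(z)|$, and in the second term a mean-value estimate produces a factor $\phi(\xi_z)\,|z|$, with $\phi$ the standard normal density, which absorbs the powers of $z$ once one checks, using $\Psi_1(T)\to0$ (so $2\norm{\varphi_T}^2/b_T^2\to1$) and $\Psi_3(T)\to0$ (so $\norm{\psi_T}/b_T\to0$), that $\sigma_{T,z}$ stays bounded away from $0$ and $\infty$ on this range; together with the contraction estimate this keeps the bound $\le C\varepsilon_T$ there. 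On $\{|z|>M_T\}$ I would instead observe that both $\Phi(z)$ and $P(\,\cdot\le z)$ lie within $O(\varepsilon_T)$ of $0$ (resp.\ $1$): on $\{I_2(\psi_T)\ge-b_T/2\}$, whose complement has probability $\le 8\norm{\psi_T}^2/b_T^2\le 4\Psi_3(T)$ by Chebyshev, one has $\big|\tfrac{I_2(\varphi_T)}{I_2(\psi_T)+b_T}\big|\le 2|I_2(\varphi_T)|/b_T$, and since a second-chaos random variable has a sub-exponential tail with scale comparable to $\norm{I_2(\varphi_T)}_{L^2}\asymp b_T$, we get $P\big(\big|\tfrac{I_2(\varphi_T)}{I_2(\psi_T)+b_T}\big|>M_T\big)\lesssim e^{-cM_T}+\Psi_3(T)=O(\varepsilon_T)$, while $1-\Phi(M_T)=O(\varepsilon_T)$ as well. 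Combining the two ranges gives the asserted bound $c\max_i\Psi_i(T)$.

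The main obstacle is precisely this uniformity in $z$: one must keep careful track of the three genuinely distinct error mechanisms — normal approximation of the numerator $I_2(\varphi_T)$ (governed by $\Psi_1$), the mismatch between the numerator variance and the deterministic normalisation $b_T^2$ together with the numerator--denominator correlation (governed by $\Psi_2$), and the fluctuation of the denominator $I_2(\psi_T)$ (governed by $\Psi_3$) — with their natural weights $1,|z|,z^2$, and then show that truncating at a slowly growing level costs only a multiplicative constant. A minor technical point is that $\sigma_{T,z}$ degenerates for the exceptional values of $z$ with $\varphi_T=z\psi_T$ in $\FH^{\otimes2}$; these are disposed of using the monotonicity and right-continuity in $z$ of $P(\,\cdot\le z)$ and of $\Phi$.
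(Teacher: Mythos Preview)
The paper does not prove this theorem at all: it is quoted verbatim from Corollary~1 of Kim--Park \cite{kim 3} and is used as a black box in the proof of Theorem~\ref{B-E bound thm}. So there is no ``paper's own proof'' to compare against, and your attempt should be read as an independent reconstruction of the Kim--Park argument.

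Your starting point is exactly right, and it is the heart of that argument: since $I_2(\psi_T)+b_T>0$ a.s.,
\[
\Big\{\frac{I_2(\varphi_T)}{I_2(\psi_T)+b_T}\le z\Big\}=\big\{b_T^{-1}I_2(\varphi_T-z\psi_T)\le z\big\},
\]
which reduces the ratio problem to a one–parameter family of second–chaos variables. Feeding $h=b_T^{-1}(\varphi_T-z\psi_T)$ into the Malliavin--Stein Kolmogorov bound and expanding by bilinearity to recover the three groups $\Psi_1,\Psi_2,\Psi_3$ with weights $1,|z|,z^2$ is precisely the mechanism that produces those quantities.

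Where your sketch is not yet tight is the removal of the weights $|z|$ and $z^2$. Your density–absorption trick does dispose of the variance mismatch $|\Phi(z/\sigma_{T,z})-\Phi(z)|$, because there a factor $\phi(\xi)$ genuinely appears and kills any polynomial in $z$. But the contraction piece $\norm{h\otimes_1 h}_{\FH^{\otimes 2}}$, once $h$ is normalised to unit variance, is bounded only through $\sup_w|P(F_{T,z}/\sigma_{T,z}\le w)-\Phi(w)|$, and that bound carries no Gaussian density; after expanding it is still $\lesssim \Psi_1+|z|\Psi_2+z^2\Psi_3$. With your choice $M_T\asymp\log(1/\varepsilon_T)$ (needed so that the sub-exponential tail of $I_2(\varphi_T)/b_T$ is $O(\varepsilon_T)$ on $\{|z|>M_T\}$), the inner region then gives $\lesssim (\log(1/\varepsilon_T))^{2}\varepsilon_T$ rather than $C\varepsilon_T$. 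To get the clean bound you must treat the contraction contribution \emph{pointwise} in $z$ as well, i.e.\ keep it in the form $E\big[f_z'(F_{T,z})\,I_2(h\tilde\otimes_1 h)\big]$ and exploit that the Stein solution satisfies $0\le f_z'\le 1$ together with the concentration of $F_{T,z}$ to show $E[f_z'(F_{T,z})^2]$ itself decays for large $|z|$ (alternatively, apply a further Malliavin integration by parts to this cross term). This is the step that separates a proof from a heuristic; the remainder of your outline is sound.
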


\medskip
\section{Strong Consistency:  Proof of Theorem~\ref{thm strong} }

We first define some important functions that will be used in the proof. Denote
\begin{align}
f_T(t,s)&= e^{-\theta \abs{t-s}}\mathbbm{1}_{\set{0\le s,t\le T}},\label{ft ts 000} \\
h_T(t,s)&= e^{-\theta (T-t)-\theta (T-s)}\mathbbm{1}_{\set{0\le s,t\le T}},\label{ht ts}\\
g_T(t,s)&= \frac{1}{2\theta T}(f_T- h_T).\label{gt ts}
\end{align} 
The solution to the SDE \eqref{fOU} with $\sigma=1$ is $$X_t = \int_0^t e^{-\theta(t-s)} dG_s = I_1(f_T(t, \cdot) \mathbbm{1}_{[0,t]}(\cdot))\,.$$
We apply the product formula of multiple integrals \eqref{ito.prod} and stochastic Fubini theorem to obtain
\begin{align}
\frac{1}{T} \int_0^T X_t^2\mathrm{d} t =  I_2(g_T)+b_T,  \label{x square}
\end{align}
where 
 \begin{equation}
	 b_T=\frac{1}{T}\int_0^T\, \norm{e^{-\theta (t-\cdot) }\mathbbm{1}_{[0,t]}(\cdot)}^2 _{\mathfrak{H}}\dif t.\label{bt bt}
 \end{equation}
From the equation (\ref{hattheta}), we can write
\begin{align} \label{ratio 1}
\sqrt{T} (\hat{\theta}_T-\theta )&=-\frac{\frac{1}{2\sqrt{T}} I_2(f_T)}{ I_2(g_T)+b_T}.
\end{align}

 In the remaining part of this paper, $C$ will be a generic positive constant independent of $T$ whose value may differ from line to line.
 
\begin{nott}
For a function $\phi(r) \in \mathcal{V}_{[0,T]}$, we define two norms as 
\begin{align}\label{norms}
 \norm{\phi}_{\FH_1}^2&=C_{\beta}\int_{[0,\,T]^2} \phi(r_1)\phi(r_2) \abs{r_1-r_2}^{2\beta-2} \dif r_1\dif r_2,\\
 \norm{\phi}_{\FH_2}^2&=C_{\beta}' \int_{[0,\,T]^2} \abs{\phi(r_1)\phi(r_2)} ({r_1 r_2}  )^{\beta-1} \dif r_1\dif r_2.
 \end{align}
For a function $\varphi(r,s)$ in $[0,\,T]^2$, define an operator from $ (\mathcal{V}_{[0,T]})^{\otimes 2}$ to $\mathcal{V}_{[0,T]}$ as follows,
\begin{align}\label{ft sup}
(\tensor{K}\varphi) (r)=\int_{0}^{T} \abs{\varphi(r,u)} u^{   {\beta-1 }}\dif u.
\end{align}
\end{nott}
\begin{remark} If $C_{\beta}'>0 $, the norm $ \norm{\cdot}_{\FH_2}$ is equivalent to the standard norm in  $L^1([0,T],\nu)$ with $\nu(\dif x)=x^{\beta-1}\dif x$.
\end{remark}

The following proposition is a consequence of the identity (\ref{innp fg3}).
\begin{prop}
Suppose that Hypothesis~\ref{hypthe 1} holds. Then for any $\phi\in \mathcal{V}_{[0,T]}$,
 \begin{equation}\label{norm.ineq2}
 	\abs{\norm{\phi}_{\FH }^2 - \norm{\phi}_{\FH_1 }^2 }\leq \norm{\phi}_{\FH_2 }^2,
 \end{equation}
and for any $ \varphi, \psi \in (\mathcal{V}_{[0,T]})^{\odot 2}$,
 \begin{align}
 	\abs{\norm{\varphi}_{\FH^{\otimes 2}}^2 - \norm{\varphi}_{\FH_1^{\otimes 2}}^2 }&\leq \norm{\varphi}_{\FH_2^{\otimes 2}}^2 + 2C_{\beta}' \norm{\tensor{K}\varphi}_{\FH_1}^2,\label{norm.ineq}\\
	\abs{\innp{\varphi,\, \psi}_{\FH^{\otimes 2}}  - \innp{\varphi,\,\psi}_{\FH_1^{\otimes 2}} } & \leq  \abs{\innp{\varphi,\,\psi}_{\FH_2^{\otimes 2}}}  + 2C_{\beta}' \abs{\innp{\tensor{K}\varphi,\, \tensor{K}\psi}_{\FH_1}}. \label{inner product 00.ineq}
 \end{align}
\end{prop}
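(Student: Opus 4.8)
The plan is to feed the kernel decomposition of Hypothesis~\ref{hypthe 1} into the integral representation \eqref{innp fg3} (and its tensor‑product analogue) and then reorganise the resulting terms. Throughout, write $\rho(t,s):=\partial^2 R(t,s)/\partial t\partial s$ and split it as $\rho=\rho_1+\rho_2$ with $\rho_1(t,s)=C_\beta\abs{t-s}^{2\beta-2}\geq 0$ and $\rho_2(t,s)=\Psi(t,s)$, so that $\abs{\rho_2(t,s)}\leq C_\beta'(ts)^{\beta-1}$. Then \eqref{norm.ineq2} is immediate: \eqref{innp fg3} gives $\norm{\phi}_{\FH}^2=\norm{\phi}_{\FH_1}^2+\int_{[0,T]^2}\phi(t)\phi(s)\Psi(t,s)\dif t\dif s$, and the last integral is bounded in absolute value by $C_\beta'\int_{[0,T]^2}\abs{\phi(t)\phi(s)}(ts)^{\beta-1}\dif t\dif s=\norm{\phi}_{\FH_2}^2$.

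For \eqref{norm.ineq} and \eqref{inner product 00.ineq} the first step is to upgrade \eqref{innp fg3} to $\FH^{\otimes 2}$. On product functions $f_1\otimes g_1$, $f_2\otimes g_2$ with $f_i,g_i\in\mathcal V_{[0,T]}$ the $\FH^{\otimes 2}$‑inner product factors as $\innp{f_1,f_2}_{\FH}\,\innp{g_1,g_2}_{\FH}$, each factor being given by \eqref{innp fg3}; extending bilinearly and invoking the density part of Proposition~\ref{main prelim} yields, for $\varphi,\psi\in(\mathcal V_{[0,T]})^{\odot 2}$, the representation
\[
\innp{\varphi,\psi}_{\FH^{\otimes 2}}=\int_{[0,T]^4}\varphi(t_1,t_2)\,\psi(s_1,s_2)\,\rho(t_1,s_1)\,\rho(t_2,s_2)\,\dif t_1\dif t_2\dif s_1\dif s_2 .
\]
Expanding $\rho(t_1,s_1)\rho(t_2,s_2)=(\rho_1+\rho_2)(t_1,s_1)(\rho_1+\rho_2)(t_2,s_2)$ produces four terms. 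The $\rho_1\rho_1$ term is precisely $\innp{\varphi,\psi}_{\FH_1^{\otimes 2}}$ (respectively $\norm{\varphi}_{\FH_1^{\otimes 2}}^2$ when $\psi=\varphi$), and since $\varphi$ and $\psi$ are symmetric, the change of variables $(t_1,s_1)\leftrightarrow(t_2,s_2)$ shows the two mixed terms $\rho_1\rho_2$ and $\rho_2\rho_1$ are equal.

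It remains to bound the three non‑principal terms. For either mixed term, use $\rho_1\geq 0$, estimate $\abs{\rho_2(t_2,s_2)}\leq C_\beta'(t_2 s_2)^{\beta-1}$, and perform the $t_2$‑ and $s_2$‑integrals: by the definition \eqref{ft sup}, $\int_0^T\abs{\varphi(t_1,t_2)}t_2^{\beta-1}\dif t_2=(\tensor K\varphi)(t_1)$ and $\int_0^T\abs{\psi(s_1,s_2)}s_2^{\beta-1}\dif s_2=(\tensor K\psi)(s_1)$, so each mixed term is bounded by $C_\beta'\,\abs{\innp{\tensor K\varphi,\tensor K\psi}_{\FH_1}}$ once one recognises $C_\beta\int_{[0,T]^2}(\tensor K\varphi)(t_1)(\tensor K\psi)(s_1)\abs{t_1-s_1}^{2\beta-2}\dif t_1\dif s_1$ as $\innp{\tensor K\varphi,\tensor K\psi}_{\FH_1}$ (which is nonnegative since $\tensor K\varphi,\tensor K\psi\geq 0$); this is exactly where the auxiliary operator $\tensor K$ is needed, and the two mixed terms together contribute $2C_\beta'\,\abs{\innp{\tensor K\varphi,\tensor K\psi}_{\FH_1}}$. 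The $\rho_2\rho_2$ term is bounded by $C_\beta'^2\big(\int_{[0,T]^2}\abs{\varphi(t_1,t_2)}(t_1t_2)^{\beta-1}\dif t_1\dif t_2\big)\big(\int_{[0,T]^2}\abs{\psi(s_1,s_2)}(s_1s_2)^{\beta-1}\dif s_1\dif s_2\big)=\abs{\innp{\varphi,\psi}_{\FH_2^{\otimes 2}}}$. A triangle inequality now gives \eqref{inner product 00.ineq}, and \eqref{norm.ineq} is the case $\psi=\varphi$ (equivalently, it follows by polarisation), so both can be proved at once.

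The computational part is routine bookkeeping; the one step that needs care is the tensor‑product representation itself — establishing it for a general $\varphi,\psi\in(\mathcal V_{[0,T]})^{\odot 2}$, not merely for finite linear combinations of product functions, and checking that every iterated integral above converges absolutely so that Fubini's theorem and the rearrangements are legitimate. The latter is harmless: bounded‑variation functions on $[0,T]$ are bounded, and both $\abs{t-s}^{2\beta-2}$ and $(ts)^{\beta-1}$ are integrable on $[0,T]^2$ for $\beta\in(\tfrac12,1)$, which also places $\varphi,\psi$ and $\tensor K\varphi,\tensor K\psi$ in the relevant $\abs{\FH}$‑type function classes.
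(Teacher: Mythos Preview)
Your proposal is correct and is precisely the argument the paper has in mind: the paper states only that the proposition ``is a consequence of the identity \eqref{innp fg3}'' and gives no further details, so your expansion of $\rho=\rho_1+\rho_2$ inside the (tensor-product) integral representation, together with the termwise bounds via $\tensor K$, is exactly the intended proof spelled out.
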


For any $t\in [0,T]$, denote the Wiener-It\^{o} stochastic integral of $f_T(r,s)\mathbbm{1}_{\set{0\le r,s\le t}}$ as 
\begin{equation}\label{Ft}
	F_t:=I_2(f_T(r,s)\mathbbm{1}_{\set{0\le r,s\le t}}).
\end{equation}
The next two propositions are about the asymptotic behaviors of the second moment of $F_T$ and the increment $F_t - F_s$ with $0\le t,s\le T$ respectively.
First, we need a technical lemma.
\begin{lemma} \label{upper bound F}
 Assume $\beta\in (0,1)$. There exists a constant $C>0$ such that for any  $s\in [0,\infty)$,
\begin{equation}
e^{-\theta s}\int_0^{s} e^{\theta r} r^{\beta -1}\dif r \le C (1\wedge s^{\beta-1}).
\end{equation}
\end{lemma}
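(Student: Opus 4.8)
The plan is to prove the bound by splitting the range of $s$ into two regimes, $s \le 1$ and $s > 1$, since the right-hand side $1 \wedge s^{\beta-1}$ behaves differently in each (it equals $1$ near $0$ and decays like $s^{\beta-1}$ for large $s$). For the small-$s$ regime $s \in [0,1]$, I would simply bound $e^{-\theta s} \le 1$ and $e^{\theta r} \le e^{\theta s} \le e^{\theta}$ on $[0,s]$, so that
\begin{equation*}
e^{-\theta s}\int_0^s e^{\theta r} r^{\beta-1}\,\dif r \le e^{\theta}\int_0^1 r^{\beta-1}\,\dif r = \frac{e^{\theta}}{\beta},
\end{equation*}
which is a constant; this handles the ``$1$'' side of the minimum.

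For the large-$s$ regime $s > 1$, the goal is to extract the decay $s^{\beta-1}$. The natural tool is either integration by parts or splitting the integral at $s/2$. I would split: write $\int_0^s = \int_0^{s/2} + \int_{s/2}^s$. On $[s/2, s]$ one has $r^{\beta-1} \le (s/2)^{\beta-1} = 2^{1-\beta} s^{\beta-1}$ (using $\beta - 1 < 0$), so that part contributes at most $2^{1-\beta}s^{\beta-1} e^{-\theta s}\int_{s/2}^s e^{\theta r}\,\dif r \le \frac{2^{1-\beta}}{\theta} s^{\beta-1}$. On $[0,s/2]$ one has $e^{\theta r} \le e^{\theta s/2}$, so that part contributes at most $e^{-\theta s}e^{\theta s/2}\int_0^{s/2} r^{\beta-1}\,\dif r = e^{-\theta s/2}\cdot \frac{(s/2)^{\beta}}{\beta}$, and since $e^{-\theta s/2} s^{\beta}$ is bounded (indeed decays) on $[1,\infty)$, in fact $e^{-\theta s/2} s^\beta \le C_{\theta,\beta} s^{\beta-1}$ for $s \ge 1$ because $e^{-\theta s/2} s \to 0$, this is also $\le C s^{\beta-1}$. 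Combining the two regimes and taking the larger constant gives the claim.

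An even cleaner alternative for the large-$s$ part is integration by parts: $\int_0^s e^{\theta r} r^{\beta-1}\,\dif r$; but since $r^{\beta-1}$ is not the convenient factor to differentiate when $\beta<1$ (it blows up at $0$), I would prefer to first substitute or just use the splitting argument above, which is elementary and avoids boundary issues at $r=0$. One could also recognize the integral as related to a lower incomplete gamma function and invoke its known asymptotics, but the self-contained split is shorter.

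I do not anticipate a serious obstacle here; this is a routine Laplace-type estimate. The only mild subtlety is making sure the constant $C$ can be chosen uniformly in $s \in [0,\infty)$ and that it does not secretly depend on anything forbidden — it will depend on $\theta$ and $\beta$ only, which is fine since those are fixed. The matching of the two regimes at $s=1$ is automatic once both bounds are in hand. I would present it as: fix the constant from the $s\le 1$ case, fix the constant from the $s>1$ case, take the maximum, and note $1 \wedge s^{\beta-1} = 1$ for $s \le 1$ and $= s^{\beta-1}$ for $s \ge 1$.
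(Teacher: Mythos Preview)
Your argument is correct. It differs from the paper's route, though: rather than splitting the integral and producing explicit constants, the paper defines $A(s)=e^{-\theta s}\int_0^s e^{\theta r}r^{\beta-1}\,\dif r$ and argues purely from asymptotics. Continuity of $A$ together with $\lim_{s\to\infty}A(s)=0$ gives boundedness of $A$ on $[0,\infty)$; then L'H\^opital's rule yields $\lim_{s\to\infty}A(s)/s^{\beta-1}=1/\theta$ and trivially $\lim_{s\to 0}A(s)/s^{\beta-1}=0$, so $A(s)/s^{\beta-1}$ is also bounded. Your approach is more constructive (one can read off the dependence of $C$ on $\theta$ and $\beta$), while the paper's is shorter and avoids any case analysis or integral splitting. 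Both are entirely adequate for this routine estimate.
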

\begin{proof}
It is easy to check that the following function 
$$A(s)= e^{-\theta s}\int_0^{s} e^{\theta r} r^{\beta -1}\dif r,\qquad s\in [0,\infty)$$ 
is continuous and $\lim_{s\to \infty} A(s)=0$. Then $A(s)$ is bounded on $[0,\infty)$. In addition, we have
\begin{align*}
\lim_{s\to\infty}\frac{A(s)}{s^{\beta-1}}=\lim_{s\to\infty}\frac{\int_0^{s} e^{\theta r} r^{\beta -1}\dif r}{e^{\theta s}s^{\beta-1}}=\frac{1}{\theta} \,,
\end{align*}  
from L'H\^opital's rule, and clearly
\begin{align*}
\lim_{s\to 0}\frac{A(s)}{s^{\beta-1}}=0 \,,
\end{align*}  
so $|A(s)| \leq C s^{\beta-1}$. Hence we obtain the conclusion.
\end{proof}
\begin{prop}\label{Ft 2 norm}
When $\beta\in (\frac12,\,\frac34)$,
\begin{align}\label{Ft-norm}
\lim_{T\to\infty}\frac{1}{4 \theta \sigma_{\beta}^2 T}\E[\abs{F_T} ^2]= (C_\beta \Gamma(2\beta-1) \theta^{-2\beta})^2 .
\end{align}
When  $\beta=\frac34$,
\begin{align}\label{Ft-norm 2}
\lim_{T\to\infty}\frac{1}{T\log T}\E[\abs{F_T} ^2]= 4C_{\beta}^2\theta^{-2 } .
\end{align}
When  $\beta\in (\frac34, \,1)$,
\begin{align}\label{Ft-norm 3}
\limsup_{T\to\infty}\frac{1}{ T^{4\beta-2}}\E[\abs{F_T} ^2]<\infty .
\end{align}
\end{prop}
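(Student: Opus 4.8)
The plan is to read off all three statements from the growth of $\E[\abs{F_T}^2]$, which by the isometry $I_2\colon \FH^{\odot 2}\to\mathcal{H}_2$ is a fixed multiple of $\norm{f_T}_{\FH^{\otimes 2}}^2$ (recall $f_T$ is symmetric). The idea is to replace the full inner product of $\FH$ by the fractional-Brownian-type norm $\norm{\cdot}_{\FH_1}$, whose tensor square is explicitly computable, and to absorb the difference into the error terms supplied by \eqref{norm.ineq}. Applied to $\varphi=f_T$ (admissible by the approximation used in Proposition~\ref{main prelim}), that inequality reads
\[
\Big|\,\norm{f_T}_{\FH^{\otimes 2}}^2-\norm{f_T}_{\FH_1^{\otimes 2}}^2\,\Big|\le \norm{f_T}_{\FH_2^{\otimes 2}}^2+2C_\beta'\norm{\tensor{K}f_T}_{\FH_1}^2 ,
\]
so it remains to bound the error side and to compute the main term $\norm{f_T}_{\FH_1^{\otimes 2}}^2$ in each regime.

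The error terms are controlled by Lemma~\ref{upper bound F}. Since $\abs{f_T(t,s)}=e^{-\theta\abs{t-s}}$, the first factorizes,
\[
\norm{f_T}_{\FH_2^{\otimes 2}}^2=(C_\beta')^2\Big(\int_{[0,T]^2}e^{-\theta\abs{t-s}}(ts)^{\beta-1}\,\dif t\,\dif s\Big)^{2},
\]
and the exponential forces $s$ near $t$, so by Lemma~\ref{upper bound F} and $\int_0^T t^{2\beta-2}\dif t\asymp T^{2\beta-1}$ (valid for $\beta>\tfrac12$) the inner integral is $O(T^{2\beta-1})$, whence $\norm{f_T}_{\FH_2^{\otimes 2}}^2=O(T^{4\beta-2})$. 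Likewise $(\tensor{K}f_T)(r)=\int_0^T e^{-\theta\abs{r-u}}u^{\beta-1}\dif u\le C(1\wedge r^{\beta-1})$ by Lemma~\ref{upper bound F}, and a rescaling $r_i=Tx_i$ bounds $\norm{\tensor{K}f_T}_{\FH_1}^2$ by $C\int_{[1,T]^2}(r_1r_2)^{\beta-1}\abs{r_1-r_2}^{2\beta-2}\dif r_1\dif r_2+O(1)=O(T^{4\beta-2})$, the limit integral $\int_{[0,1]^2}(x_1x_2)^{\beta-1}\abs{x_1-x_2}^{2\beta-2}\dif x_1\dif x_2$ being finite because $\beta>\tfrac12$. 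Since $4\beta-2<1$ for $\beta\in(\tfrac12,\tfrac34)$ and $=1$ for $\beta=\tfrac34$, the total error is $o(T)$, $o(T\log T)$, $O(T^{4\beta-2})$ respectively, i.e.\ negligible against the claimed leading order.

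It then remains to evaluate
\[
\norm{f_T}_{\FH_1^{\otimes 2}}^2=C_\beta^2\int_{[0,T]^4}e^{-\theta\abs{t_1-s_1}-\theta\abs{t_2-s_2}}\abs{t_1-t_2}^{2\beta-2}\abs{s_1-s_2}^{2\beta-2}\,\dif t_1\dif s_1\dif t_2\dif s_2 ,
\]
which is exactly the variance integral entering the analysis of the fractional Ornstein--Uhlenbeck LSE with Hurst index $\beta$; its behaviour is available in \cite{hu Nua 10}, \cite{hu nua zhou 19}, but I would re-derive it by the substitution $x_i=t_i-s_i$ $(i=1,2)$, $w=s_1-s_2$, keeping $s_2$ as anchor (each Jacobian $1$). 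Integrating out $s_2$ gives the weight $(T-\abs{w})^+$ up to $O(\abs{x_1}+\abs{x_2})$ corrections, each $x_i$-integral contributes $\int_{\R}e^{-\theta\abs{x}}\dif x=2/\theta$ with exponentially small truncation error, and one is left with the bracket $\int_{-T}^{T}(T-\abs{w})^+\abs{w+x_1-x_2}^{2\beta-2}\abs{w}^{2\beta-2}\dif w$, whose behaviour is dictated by the integrability of $\abs{w}^{4\beta-4}$ at infinity: for $\beta\in(\tfrac12,\tfrac34)$ it is integrable, the bracket is $\sim T\int_{\R}\abs{w+c}^{2\beta-2}\abs{w}^{2\beta-2}\dif w$, and assembling gives a limit $c_1T$ with $c_1=C_\beta^2\int_{\R^3}e^{-\theta\abs{x_1}-\theta\abs{x_2}}\abs{w+x_1-x_2}^{2\beta-2}\abs{w}^{2\beta-2}\dif w\dif x_1\dif x_2$, which after the Beta/Gamma evaluation and the isometry constant yields \eqref{Ft-norm}; for $\beta=\tfrac34$ the kernel is $\abs{w}^{-1}$, only logarithmically non-integrable, so the bracket is $\sim c\,T\log T$, giving \eqref{Ft-norm 2}; for $\beta\in(\tfrac34,1)$ the kernel is polynomially non-integrable and scaling gives the bracket $O(T^{4\beta-2})$, hence $\norm{f_T}_{\FH_1^{\otimes 2}}^2=O(T^{4\beta-2})$, which with the error bound gives \eqref{Ft-norm 3}.

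The step I expect to be hardest is this last evaluation: showing that the limiting three-fold integral equals precisely $4\theta\sigma_\beta^2(C_\beta\Gamma(2\beta-1)\theta^{-2\beta})^2$ with $\sigma_\beta^2=(4\beta-1)\big(1+\Gamma(3-4\beta)\Gamma(4\beta-1)/(\Gamma(2\beta)\Gamma(2-2\beta))\big)$ --- the factor $\Gamma(3-4\beta)$ being produced by $\int_{\R}\abs{w+c}^{2\beta-2}\abs{w}^{2\beta-2}\dif w$ --- and keeping the bookkeeping clean through the logarithmic borderline $\beta=\tfrac34$. Everything else, namely the reduction to $\FH_1$ and the error estimates, is routine given Proposition~\ref{main prelim}, \eqref{norm.ineq}, and Lemma~\ref{upper bound F}.
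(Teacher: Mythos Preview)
Your proposal is correct and follows the same architecture as the paper's proof: use the isometry $\E[\abs{F_T}^2]=2\norm{f_T}_{\FH^{\otimes 2}}^2$, invoke \eqref{norm.ineq} to reduce to the fractional-Brownian norm $\norm{f_T}_{\FH_1^{\otimes 2}}^2$, and bound the two error terms $\norm{f_T}_{\FH_2^{\otimes 2}}^2$ and $\norm{\tensor{K}f_T}_{\FH_1}^2$ by $O(T^{4\beta-2})$ via Lemma~\ref{upper bound F}. The only difference is in the treatment of the main term $\norm{f_T}_{\FH_1^{\otimes 2}}^2$: the paper simply cites Lemma~5.3 of \cite{hu Nua 10} for the case $\beta\in(\tfrac12,\tfrac34)$ and Lemma~17 of \cite{hu nua zhou 19} for $\beta\ge\tfrac34$, whereas you sketch a direct derivation via the change of variables $x_i=t_i-s_i$, $w=s_1-s_2$. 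Your sketch is the standard route to those cited lemmas and is sound; the step you flag as hardest --- matching the limiting triple integral to the explicit constant $2\theta\sigma_\beta^2(C_\beta\Gamma(2\beta-1)\theta^{-2\beta})^2$ --- is precisely what those references carry out, so if you are willing to quote them you can skip that bookkeeping entirely.
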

\begin{proof}
By It\^{o}'s isometry, we have
\begin{align*} 
\E[\abs{F_T}^2]&=2\norm{f_T}^2_{\FH^{\otimes 2}}.
\end{align*}
The inequality (\ref{norm.ineq}) implies that 
\begin{align}\label{ft norm 02}
\abs{ \norm{f_T}^2_{\FH^{\otimes 2}}-\norm{ f_T}_{\FH_1^{\otimes 2}}^2} \le  \norm{f_T }_{\FH_2^{\otimes 2}}^2+2C_{\beta}'\norm{ \tensor{K}f_T}_{\FH_1}^2.
\end{align}
First, Lemma 5.3 in \cite{hu Nua 10}  implies that when $\beta\in (\frac12,\,\frac34)$,
\begin{align}\label{fth1}
\lim_{T\to \infty } \frac{1}{2 \theta \sigma_{\beta}^2 T}\norm{ f_T}_{\FH_1^{\otimes 2}}^2&=(C_\beta   \Gamma(2\beta-1 )\theta^{-2\beta}  )^2.
\end{align}
Moreover,  we have
\begin{align}
\int_{[0,T]^2}e^{-\theta\abs{r-s}}(rs)^{\beta-1}\dif r\dif s &=2\int_{0<s<r\le T} e^{-\theta(r-s)}(rs)^{\beta-1}\dif r\dif s \nonumber\\
&\le 2\int_0^T s^{2(\beta-1)} (\int_{s}^T e^{-\theta(r-s)}\dif r)\dif s \nonumber \\
&\le C T^{2\beta -1},\label{upper bound 0011}
\end{align} so
\begin{align}\label{ft fh2 norm 2}
\norm{f_T }_{\FH_2^{\otimes 2}}^2 =\left| C_{\beta}' \int_{[0,T]^2}e^{-\theta\abs{r-s}}(rs)^{\beta-1}\dif r\dif s\right|^2\le C T^{4\beta -2}.
\end{align}
Meanwhile, we have
 $$\tensor{K}f_T (r) = \Big[\int_{0}^r e^{-\theta(r-u)}u^{\beta-1}\dif u +\int_r^T e^{-\theta( u-r)}u^{\beta-1}\dif u\Big] \mathbbm{1}_{\set{0\le r \le T}} \,.$$
Lemma~\ref{upper bound F} and making change of variable $v=u-r$ yield
\begin{align}
 \tensor{K}f_T (r) & \le \Big[C r^{\beta -1}+\int_0^{\infty} e^{-\theta v} (v+r)^{\beta-1}\dif v\Big] \mathbbm{1}_{\set{0\le r \le T}} \nonumber  \\
 &\le C r^{\beta -1} \mathbbm{1}_{\set{0\le r \le T}}. \label{Kft bound}
\end{align}
Then
\begin{align}\label{Kft bound 02}
\norm{\tensor{K}f_T}_{\FH_1}^2 & \le C \int_{0<v<u<T} (u-v)^{2\beta -2} (uv)^{\beta-1}\dif u\dif v = C T^{4\beta -2}.
\end{align}
Combining the above inequalities with (\ref{ft norm 02}), we obtain that  when $\beta\in (\frac12,\,\frac34)$,
	$$\lim_{T\to\infty}\frac{1}{2 \theta \sigma_{\beta}^2 T} \abs{\norm{f_T}^2_{\FH^{\otimes 2}}-\norm{f_T}_{\FH_1^{\otimes 2}}^2} =0 , $$
which together with \eqref{fth1} and {It\^{o}'s isometry} implies the desired (\ref{Ft-norm}).
In the same way, we can obtain (\ref{Ft-norm 2}) and (\ref{Ft-norm 3}) from Lemma~ 17 of \cite{hu nua zhou 19}.
\end{proof}
\begin{prop}\label{Ft 2 norm 1} 
If Hypothesis~\ref{hypthe 1} is satisfied,
there exists a  constant $C>0$ independent of $T$ such that for all $s,t\ge 0$, 
\begin{align}\label{E norm 2}
\E[\abs{F_t-F_s} ^2]\le C \big[ \abs{t-s}^{4\beta}+ \abs{t-s}^{2\beta}+\abs{t-s}^{2\beta-1} \big].
\end{align}
\end{prop}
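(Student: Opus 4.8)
The plan is to estimate $\E[\abs{F_t-F_s}^2] = 2\norm{f_T(\cdot,\cdot)\mathbbm{1}_{[0,t]^2} - f_T(\cdot,\cdot)\mathbbm{1}_{[0,s]^2}}^2_{\FH^{\otimes 2}}$ by reducing everything to the two auxiliary norms $\norm{\cdot}_{\FH_1}$ and $\norm{\cdot}_{\FH_2}$ via inequality \eqref{norm.ineq}, exactly as in the proof of Proposition~\ref{Ft 2 norm}. Without loss of generality assume $0 \le s \le t \le T$, and write $\phi_{t,s}(r_1,r_2) := e^{-\theta\abs{r_1-r_2}}\big(\mathbbm{1}_{[0,t]^2}(r_1,r_2) - \mathbbm{1}_{[0,s]^2}(r_1,r_2)\big)$. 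The support of $\phi_{t,s}$ is the ``L-shaped'' region $[0,t]^2 \setminus [0,s]^2$, on which at least one of $r_1,r_2$ lies in $(s,t]$. By \eqref{norm.ineq} it suffices to bound the three quantities $\norm{\phi_{t,s}}^2_{\FH_1^{\otimes 2}}$, $\norm{\phi_{t,s}}^2_{\FH_2^{\otimes 2}}$, and $\norm{\tensor{K}\phi_{t,s}}^2_{\FH_1}$, each by $C\big(\abs{t-s}^{4\beta} + \abs{t-s}^{2\beta} + \abs{t-s}^{2\beta-1}\big)$.

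First I would handle the ``fBm part'' $\norm{\phi_{t,s}}^2_{\FH_1^{\otimes 2}}$. Here I expect to invoke the known increment estimates for the fractional-Brownian-motion Hilbert space from \cite{hu Nua 10}, \cite{hu nua zhou 19}: by translation-invariance of the kernel $\abs{r_1-r_2}^{2\beta-2}$ one reduces to estimating $\norm{f_{t-s}}^2_{\FH_1^{\otimes 2}}$-type quantities on an interval of length $t-s$, and the self-similarity/scaling of the fBm inner product produces precisely the powers $\abs{t-s}^{4\beta}$ and lower-order terms $\abs{t-s}^{2\beta}$, $\abs{t-s}^{2\beta-1}$ coming from the cross terms between the ``old'' square $[0,s]^2$ and the new strip. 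Concretely, $\phi_{t,s}$ splits as a sum of a piece supported on $(s,t]^2$ and pieces supported on $[0,s]\times(s,t]$ (and its transpose); expanding $\norm{\cdot}^2_{\FH_1^{\otimes 2}}$ gives a finite sum of integrals, each of which is estimated by bounding the exponential factors by $1$ (for the long-range interactions) and using $\int\int_{(s,t]^2}\abs{r_1-r_2}^{2\beta-2}\,dr_1dr_2 = C\abs{t-s}^{2\beta}$, together with one-dimensional integrals $\int_s^t\int_0^s \abs{r_1-r_2}^{2\beta-2}$, which are $O(\abs{t-s}^{2\beta-1})$ after integrating out.

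Next I would dispatch the two ``lower-order'' terms. For $\norm{\phi_{t,s}}^2_{\FH_2^{\otimes 2}} = \big(C_\beta'\int_{[0,T]^2}\abs{\phi_{t,s}(r_1,r_2)}(r_1r_2)^{\beta-1}dr_1dr_2\big)^2$, one uses that on the support at least one variable exceeds $s$, bounds $e^{-\theta\abs{r_1-r_2}}\le 1$, and estimates $\int\int (r_1 r_2)^{\beta-1}$ over the L-shaped region; because $\beta-1<0$ the weight is integrable and the region has the scaling that yields $O(\abs{t-s}^{2\beta-?})$ — care is needed near $0$, but since the strip $(s,t]$ is bounded away from $0$ unless $s=0$, and when $s=0$ the region is $(0,t]^2$ giving $O(t^{2\beta})$, the stated bound absorbs all cases. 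For $\norm{\tensor{K}\phi_{t,s}}^2_{\FH_1}$, I would first estimate $\tensor{K}\phi_{t,s}(r) = \int_0^T\abs{\phi_{t,s}(r,u)}u^{\beta-1}du$ pointwise, using Lemma~\ref{upper bound F} as in \eqref{Kft bound}, getting a bound of the form $C\,\psi(r)$ where $\psi$ is supported on $[0,t]$ and is $O(r^{\beta-1})$-type on $(s,t]$ plus a small contribution on $[0,s]$ from the interaction with the new strip; then feed this into $\norm{\cdot}^2_{\FH_1}$ and integrate, again producing the three advertised powers.

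The main obstacle is purely bookkeeping: keeping track of all the cross-region integrals (strip $\times$ strip, strip $\times$ old square, and the $\tensor{K}$-operator's tail contributions) and checking that each is controlled by one of $\abs{t-s}^{4\beta}$, $\abs{t-s}^{2\beta}$, or $\abs{t-s}^{2\beta-1}$ uniformly in $T$ — in particular verifying that the long-range exponential decay $e^{-\theta\abs{r_1-r_2}}$ is genuinely needed (and suffices) to kill the $T$-dependence in the cross terms between $[0,s]$ and $(s,t]$ when $s$ is large. No new idea beyond the techniques already used for Proposition~\ref{Ft 2 norm} is required; the estimate is deliberately crude (a sum of three powers rather than a sharp asymptotic) precisely so that these routine bounds go through. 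The inequality is then assembled by collecting the three pieces through \eqref{norm.ineq} and multiplying by $2$ via It\^o's isometry.
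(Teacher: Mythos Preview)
Your overall architecture is correct and coincides with the paper's: It\^o isometry, the split $\phi_{t,s}=\phi_1+\phi_2$ with $\phi_1$ supported on $(s,t]^2$ and $\phi_2$ on the cross strips, and then the reduction via \eqref{norm.ineq} to $\norm{\cdot}_{\FH_1^{\otimes 2}}$, $\norm{\cdot}_{\FH_2^{\otimes 2}}$, and $\norm{\tensor{K}(\cdot)}_{\FH_1}$. The $\phi_1$ part indeed gives $C\abs{t-s}^{4\beta}$ in all three norms, essentially as you describe.

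The genuine gap is in your treatment of the cross piece $\phi_2$. You propose to ``bound the exponential factors by $1$'' there, and you claim $\int_s^t\int_0^s\abs{r_1-r_2}^{2\beta-2}\,dr_1dr_2=O(\abs{t-s}^{2\beta-1})$. Both fail uniformly in $s$: the latter integral is $\tfrac{1}{2\beta(2\beta-1)}\big(t^{2\beta}-s^{2\beta}-(t-s)^{2\beta}\big)\sim C\,s^{2\beta-1}(t-s)$ for large $s$, and likewise the four-fold integral for $\norm{\phi_2}^2_{\FH_1^{\otimes 2}}$ with exponentials replaced by $1$ is of order $s^{2\beta}(t-s)^{2\beta}$. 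The same issue arises for $\norm{\phi_2}^2_{\FH_2^{\otimes 2}}$: bounding $e^{-\theta\abs{r_1-r_2}}\le 1$ over the L-region gives $\big(\int_L (r_1r_2)^{\beta-1}\big)^2\le C\,t^{2\beta}(t-s)^{2\beta}$. So the exponential decay is not ``purely bookkeeping'' but the mechanism that removes the $T$-dependence (which, as the Remark following the Proposition stresses, is the whole point). The paper handles $\norm{\phi_2}^2_{\FH_1^{\otimes 2}}$ by the change of variables $a=u_1-r_1,\,b=s-u_1,\,c=r_2-s,\,p=u_2-r_2$, which factorizes the exponent as $e^{-\theta a}e^{-2\theta b}e^{-2\theta c}e^{-\theta p}$; the $b$-integral over $[0,s-a]$ is then $\le\frac{1}{2\theta}$ uniformly, the $a$-integral is bounded by $\Gamma(2\beta-1)\theta^{1-2\beta}$, and only the $p$-integral over $[0,t-s]$ contributes the factor $(t-s)^{2\beta-1}$. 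For $\norm{\phi_2}^2_{\FH_2^{\otimes 2}}$ the paper factorizes $e^{-\theta(r_2-r_1)}=e^{\theta r_1}e^{-\theta r_2}$ and combines Lemma~\ref{upper bound F} (giving $e^{-\theta s}\int_0^s e^{\theta r}r^{\beta-1}dr\le C$) with $\int_s^t e^{-\theta r_2}r_2^{\beta-1}dr_2\le \tfrac{1}{\beta}e^{-\theta s}(t-s)^{\beta}$, so that the two $e^{\pm\theta s}$ factors cancel. You correctly flagged this danger in your final paragraph, but it is the crux of the argument, not a detail to be checked afterward.
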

\begin{remark}
Although the inner product of the Hilbert space $\FH$ is related to $T$, the constant $C$ in the above proposition does not depend on $T$. This fact is crucial to the proof of Proposition~\ref{prop ft ht}.
\end{remark}
\begin{proof} Let $0\le s<t\le T$.
 It\^{o}'s isometry implies that 
\begin{align}\label{ft fs}
\E[\abs{F_t-F_s} ^2]&=2\norm{f_t-f_s}^2_{\FH^{\otimes 2}} \le 4 (\norm{\phi_1}_{\FH^{\otimes 2}}^2+\norm{\phi_2 }_{\FH^{\otimes 2}}^2),
\end{align}
where 
\begin{align*}
\phi_1(r_1,r_2)&=e^{-\theta \abs{r_1-r_2}}\mathbbm{1}_{\set{s\le r_1,r_2\le t}},\\
\phi_2(r_1,r_2)&=e^{-\theta \abs{r_1-r_2}}(\mathbbm{1}_{\set{0\le r_1\le s,\, s\le r_2\le t}} + \mathbbm{1}_{\set{0\le r_2\le s,\, s\le r_1\le t}} ).
\end{align*}
Clearly, we have
\begin{align}
\norm{ \phi_1}_{\FH_1^{\otimes 2}}^2&=C_{\beta}^2 \int_{[s,t]^4}e^{-\theta \abs{r_1-r_2}}e^{-\theta \abs{u_1-u_2}} \abs{r_1-u_1}^{2\beta -2} \abs{r_2-u_2}^{2\beta -2}\dif \vec{u}\dif \vec{r} \nonumber\\
&\le C_{\beta}^2 \int_{[s,t]^4} \abs{r_1-u_1}^{2\beta -2} \abs{r_2-u_2}^{2\beta -2}\dif \vec{u}\dif \vec{r} \nonumber\\
&= \frac{C_{\beta}^2}{\big((2\beta-1)\beta \big)^2} \abs{t-s}^{4\beta}, \label{ph1 h1 norm}
\end{align} 
and 
\begin{align}
\norm{ \phi_1}_{\FH_2^{\otimes 2}}^2 &=C_{\beta}'^2 \int_{[s,t]^4}e^{-\theta \abs{r_1-r_2}}e^{-\theta \abs{u_1-u_2}} (r_1 u_1 r_2 u_2)^{\beta -1}  \dif \vec{u}\dif \vec{r} \nonumber\\
& \le C_{\beta}'^2 \big(\int_s^t r^{\beta -1}\dif r  \big)^4 = \frac{C_{\beta}'^2}{\beta^4}  (t^{\beta}-s^{\beta})^4 = \frac{C_{\beta}'^2}{\beta^4} t^{4\beta} \left(1-\left(\frac{s}{t}\right)^{\beta}\right)^4 \nonumber\\
&\le \frac{C_{\beta}'^2}{\beta^4} t^{4\beta} \left(1-\frac{s}{t}\right)^{4\beta} = \frac{C_{\beta}'^2}{\beta^4} \abs{t-s}^{4\beta}, \label{ph1 h2 norm}
\end{align} where in the last inequality we have used the fact $1-x^\beta \leq (1-x)^\beta$ for any $x \in [0, 1]$. Similarly, we have
\begin{align*}
\tensor{K}\phi_1(r)&=\mathbbm{1}_{\set{s\le r \le t}}\int_{s}^{t} e^{-\theta\abs{r-u}}u^{\beta-1}\dif u \le \frac{1}{\beta} (t-s)^{\beta}\mathbbm{1}_{\set{s\le r \le t}},
\end{align*}  which implies
\begin{eqnarray}
  \norm{\tensor{K}\phi_1}^2_{\FH_1} \le \frac{1}{\beta^2}(t-s)^{2\beta} \norm{\mathbbm{1}_{[s,t]}(\cdot)}^2_{\FH_1} \le \frac{1}{\beta^2}(t-s)^{2\beta} \cdot C_\beta \int_{[s,t]^2} |u-v|^{2\beta-2} du dv \nonumber\\
  \leq \frac{C_{\beta}}{2\beta^3(2\beta-1)} \abs{t-s}^{4\beta}. \label{kph1 h1 norm}
\end{eqnarray}
Hence, by \eqref{ph1 h1 norm}, \eqref{ph1 h2 norm}, \eqref{kph1 h1 norm}, it follows from the inequality (\ref{norm.ineq}) that 
\begin{equation}\label{phi1 norm}
\norm{ \phi_1}_{\FH^{\otimes 2}}^2\le \Big( \frac{C_{\beta}^2}{(2\beta-1)^2} +\frac{C_{\beta}'^2}{\beta^2}+\frac{C_{\beta}} {\beta(2\beta-1)} \Big)\frac{ \abs{t-s}^{4\beta}}{\beta^2}.
\end{equation}
Next, by the symmetry of the function $\phi_2$ we have
\begin{align*}
\norm{ \phi_2}_{\FH_1^{\otimes 2}}^2& = 16 C_{\beta}^2\int_{0\le r_1\le u_1\le s\le r_2\le u_2\le t}e^{-\theta \abs{r_1-r_2}}e^{-\theta \abs{u_1-u_2}} \abs{r_1-u_1}^{2\beta -2} \abs{r_2-u_2}^{2\beta -2}\dif \vec{u}\dif \vec{r}. 
\end{align*}
Making the change of variables $a=u_1-r_1,\, b=s-u_1,\, c=r_2-s,\, p =u_2-r_2 $, we have
\begin{align*}
&\quad \int_{0\le r_1\le u_1\le s\le r_2\le u_2\le t}e^{-\theta \abs{r_1-r_2}}e^{-\theta \abs{u_1-u_2}} \abs{r_1-u_1}^{2\beta -2} \abs{r_2-u_2}^{2\beta -2}\dif \vec{u}\dif \vec{r}\\
&=  \int_{0}^{s}e^{-\theta a} a^{2\beta -2} \dif a \int_{0}^{s-a}e^{-2\theta b} \dif b\int_{0}^{t-s} e^{-\theta p} p^{2\beta -2}\dif p \int_{0}^{t-s-p} e^{-2\theta c}\dif c\\
& \le \frac{\Gamma(2\beta-1)}{4\theta^{2\beta+1}} \int_{0}^{t-s}  p^{2\beta -2}\dif p =\frac{\Gamma(2\beta-1)}{4\theta^{2\beta+1}(2\beta-1)}  \abs{t-s}^{2\beta-1},
\end{align*}which implies that 
\begin{align}\label{phi2 H1}
\norm{\phi_2}_{\FH_1^{\otimes 2}}^2\le \frac{4C_{\beta}^2 \Gamma(2\beta-1)}{(2\beta-1)\theta^{2\beta+1}} \abs{t-s}^{2\beta-1}.
\end{align}
The symmetry also implies that
\begin{align}
\norm{ \phi_2}_{\FH_2^{\otimes 2}}^2&= 2 \norm{e^{-\theta \abs{r_1-r_2}}\mathbbm{1}_{\set{0\le r_1\le s,\, s\le r_2\le t}}}^2_{\FH_2^{\otimes 2}}\nonumber \\
&=2\Big( C_{\beta}' \int_{0\le r_1 \le s} e^{\theta  r_1  }r_1 ^{\beta -1} \dif  {r}_1 \int_s^{t} e^{-\theta r_2} r_2^{\beta -1}\dif r_2 \Big)^2 .\label{phi2 fh2}
\end{align}
Making the change of variables $r_2=u+s$ implies that 
\begin{align}
\int_s^{t} e^{-\theta r_2} r_2^{\beta -1}\dif r_2 &=\int_{0}^{t-s} e^{-\theta (u+s)} (u+s)^{\beta-1}\dif u\nonumber \\
& \le e^{-\theta s}\int_{0}^{t-s}  u^{\beta-1}\dif u=\frac{1}{\beta}e^{-\theta s}(t-s)^{\beta}. \label{phi2 fh2 1}
 \end{align}
Substituting (\ref{phi2 fh2 1}) into the identity (\ref{phi2 fh2}) and then using Lemma~\ref{upper bound F},  we have 
 \begin{align}\label{phi2 H2}
 \norm{\phi_2}_{\FH_2^{\otimes 2}}^2&\le 2\big(\frac{ C C_{\beta}'}{\beta}\big)^2 \abs{t-s}^{2\beta}.
 \end{align}
Using Lemma~\ref{upper bound F} again, we obtain
\begin{align*}
\tensor{K}\phi_2(r)&=\mathbbm{1}_{[0,s]}(r) \int_{s}^{t} e^{-\theta(u-r)}u^{\beta-1}\dif u+\mathbbm{1}_{[s,t]}(r) \int_{0}^{s} e^{-\theta(r-u)}u^{\beta-1}\dif u\\
&\le  \frac{1}{\beta}(t-s)^{\beta}e^{-\theta(s-r)}\mathbbm{1}_{[0,s]}(r) + C { e^{-\theta( r-s)}\mathbbm{1}_{[s,t]}(r)} .
\end{align*}
Therefore,
\begin{eqnarray*}
	\norm{\tensor{K}\phi_2}^2_{\FH_1} \le \frac{1}{\beta^2}(t-s)^{2\beta} \norm{e^{-\theta(s-\cdot)}\mathbbm{1}_{[0,s]}(\cdot)}^2_{\FH_1} + C^2 \norm{e^{-\theta(\cdot-s)}\mathbbm{1}_{[s,t]}(\cdot)}^2_{\FH_1} =: I_1 + I_2 .
\end{eqnarray*}
For the term $I_1$,
\begin{eqnarray*}
	I_1 & = & \frac{C_{\beta}}{\beta^2}(t-s)^{2\beta} \int_{[0,s]^2}e^{-\theta(s-r_1)-\theta(s-r_2)} \abs{r_1-r_2}^{2\beta-2}\dif r_1\dif r_2 \nonumber \\
	& = & \frac{2C_{\beta}}{\beta^2}(t-s)^{2\beta} \int_{[0,s]^2, r_1 \leq r_2}e^{-\theta(r_1 + r_2)} \abs{r_2-r_1}^{2\beta-2}\dif r_1\dif r_2 \\
	& = & \frac{2C_{\beta}}{\beta^2}(t-s)^{2\beta}  \int_{[0,s]^2,\,0 \le a+b\le s}e^{-\theta(a+2b)} a^{2\beta-2}\dif a\dif b \leq \frac{C_{\beta} \Gamma(2\beta-1)(t-s)^{2\beta}}{\theta^{2\beta} \beta^2},\label{uper bound 000}
\end{eqnarray*}
where we have made the change of variables, $a=r_2 - r_1$ and $b=r_1$. Similarly, for the term $I_2$,
\begin{eqnarray*}
    I_2 & = & C^2 C_{\beta} \int_{[s,t]^2}e^{-\theta(r_1-s)-\theta(r_2-s)} \abs{r_1-r_2}^{2\beta-2}\dif r_1\dif r_2 \\ 
	& = & 2 C^2 C_{\beta} \int_{[0,t-s]^2,\, r_1 \leq r_2}e^{-\theta(r_1 + r_2)} \abs{r_2-r_1}^{2\beta-2}\dif r_1\dif r_2 \\
	& = & 2 C^2 C_{\beta} \int_{[0,t-s]^2,\,0  \leq  a+b\le t- s}e^{-\theta(a+2b)} a^{2\beta-2}\dif a\dif b \leq \frac{2C^2 C_{\beta}(t-s)^{2\beta-1}}{2\beta-1}. \nonumber
\end{eqnarray*}
Hence, we have
\begin{equation}\label{Kphi2 H1}
	\norm{\tensor{K}\phi_2}^2_{\FH_1} \le C_{\beta} \left(\frac{\Gamma(2\beta-1)}{\theta^{2\beta} \beta^2}\abs{t-s}^{2\beta}+ \frac{2C^2}{2\beta-1}\abs{t-s}^{2\beta-1}\right).
\end{equation}
By the inequalities (\ref{norm.ineq}), \eqref{phi2 H1}, \eqref{phi2 H2}, \eqref{Kphi2 H1}, there exists a constant $C'>0$ independent of $T$ such that $$\norm{\phi_2}_{\FH^{\otimes 2}}^2\le C' \big[  \abs{t-s}^{2\beta}+\abs{t-s}^{2\beta-1} \big].$$ 
Combining it with (\ref{ft fs}) and (\ref{phi1 norm}), we obtain the desired (\ref{E norm 2}).
\end{proof}
For any $t\in [0,T]$, we denote 
\begin{equation}\label{ht}
	H_t=I_2(h_T(r,u)\mathbbm{1}_{\set{0\le r,u\le t}}),
\end{equation}
and we apply the similar computations as above to obtain the following results about asymptotic behavior of $H_T$ and the increment $H_t - H_s$.

\begin{prop}\label{prop ht} If Hypothesis~\ref{hypthe 1} is satisfied, there exists a constant $C>0$ independent of $T$ such that 
\begin{align*}
\sup_{t\ge 0}\E[\abs{H_t} ^2]&< C,
\end{align*} 
and there exist two constants $C'>0$ and $\alpha\in (0,1)$ independent of $T$ such that for any $\abs{t-s}\le 1$,
\begin{align*}
\E[\abs{H_t-H_s} ^2]&\le C' \abs{t-s}^{\alpha}.
\end{align*}
\end{prop}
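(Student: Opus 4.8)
The plan is to mimic the structure of the proof of Proposition~\ref{Ft 2 norm 1}, replacing the function $f_T$ by $h_T$ throughout. Recall that $h_T(t,s)=e^{-\theta(T-t)-\theta(T-s)}\mathbbm{1}_{\set{0\le s,t\le T}}$ factors as a tensor product $h_T(t,s)=\eta_T(t)\eta_T(s)$ with $\eta_T(r)=e^{-\theta(T-r)}\mathbbm{1}_{[0,T]}(r)$, and more generally $h_T(r,u)\mathbbm{1}_{\set{0\le r,u\le t}}=\eta_t^{(T)}\otimes\eta_t^{(T)}$ where $\eta_t^{(T)}(r)=e^{-\theta(T-r)}\mathbbm{1}_{[0,t]}(r)$. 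Because of this product structure, It\^o's isometry gives $\E[\abs{H_t}^2]=2\norm{\eta_t^{(T)}}_{\FH}^4$, so the whole problem reduces to estimating a \emph{single} norm in $\FH$ of an exponential-type function, which via Proposition~\ref{main prelim} and the triangle-type inequality \eqref{norm.ineq2} reduces further to the three explicit quantities $\norm{\eta_t^{(T)}}_{\FH_1}^2$, $\norm{\eta_t^{(T)}}_{\FH_2}^2$, and $\norm{\tensor{K}\eta_t^{(T)}}_{\FH_1}$ (this last via \eqref{norm.ineq} applied to the rank-one tensor $\eta_t^{(T)}\otimes\eta_t^{(T)}$, or simply by bounding $\norm{\eta_t^{(T)}}_{\FH}^2$ directly with \eqref{norm.ineq2}).

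For the first (boundedness) claim, I would bound $\norm{\eta_T}_{\FH}^2$ uniformly in $T$. The $\FH_1$-part is $C_\beta\int_{[0,T]^2}e^{-\theta(T-r_1)-\theta(T-r_2)}\abs{r_1-r_2}^{2\beta-2}\dif r_1\dif r_2$; substituting $r_i\mapsto T-r_i$ and then $a=\abs{r_2-r_1}$, $b=\min$, exactly as in the estimate of $I_1$ in the previous proof, gives a bound of order $\int_0^\infty e^{-\theta a}a^{2\beta-2}\dif a\cdot\int_0^\infty e^{-2\theta b}\dif b=\frac{\Gamma(2\beta-1)}{2\theta^{2\beta}}$, independent of $T$. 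The $\FH_2$-part is $C'_\beta\big(\int_0^T e^{-\theta(T-r)}r^{\beta-1}\dif r\big)^2$; after $r\mapsto T-r$ this is $C'_\beta\big(e^{-\theta T}\int_0^T e^{\theta r}(T-r)^{\beta-1}\dif r\big)^2\le C'_\beta\big(e^{-\theta T}\int_0^T e^{\theta r}r^{\beta-1}\dif r\big)^2$, wait — more carefully one writes $\int_0^T e^{-\theta(T-r)}r^{\beta-1}\dif r=\int_0^T e^{-\theta v}(T-v)^{\beta-1}\dif v$; for $\beta<1$ this is dominated by splitting $[0,T/2]\cup[T/2,T]$ and is $O(1)$ uniformly in $T$. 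The $\tensor{K}$-term: $\tensor{K}\eta_T(r)=\mathbbm{1}_{[0,T]}(r)\int_0^T e^{-\theta(T-r)}\cdot\mathbbm{1}$... actually $\tensor{K}\eta_T(r)=\int_0^T\abs{\eta_T(r)}\,\text{(no, the operator acts on two-variable functions)}$ — here one applies $\tensor{K}$ to the kernel $h_T$ itself, getting $\tensor{K}h_T(r)=e^{-\theta(T-r)}\int_0^T e^{-\theta(T-u)}u^{\beta-1}\dif u\le C e^{-\theta(T-r)}$ by the $O(1)$-bound just established, and then $\norm{\tensor{K}h_T}_{\FH_1}^2\le C\norm{\eta_T}_{\FH_1}^2\le C$ by the first computation. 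Assembling via \eqref{norm.ineq} yields $\norm{h_T}_{\FH^{\otimes 2}}^2\le C$ uniformly, hence $\sup_{t\ge0}\E[\abs{H_t}^2]<\infty$ (for $t\le T$ one uses $\eta_t^{(T)}$, which only shrinks the domain and hence all the integrals).

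For the increment bound, write $H_t-H_s=I_2(h_T\mathbbm{1}_{\set{0\le r,u\le t}}-h_T\mathbbm{1}_{\set{0\le r,u\le s}})$ and decompose the kernel difference exactly as $\phi_1+\phi_2$ was decomposed in Proposition~\ref{Ft 2 norm 1}: a "diagonal block" $\rho_1(r_1,r_2)=e^{-\theta(T-r_1)-\theta(T-r_2)}\mathbbm{1}_{\set{s\le r_1,r_2\le t}}$ and two "cross blocks" $\rho_2$ supported on $\set{0\le r_1\le s,\,s\le r_2\le t}$ and its mirror. Each $\rho_i$ still factors as a tensor product of one-variable exponentials, so It\^o's isometry reduces everything to products of the one-variable integrals $\int_s^t e^{-\theta(T-r)}r^{\beta-1}\dif r$ and $\int_0^s e^{-\theta(T-r)}r^{\beta-1}\dif r$ and the corresponding $\FH_1$-double integrals over $[s,t]^2$. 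The key point is that on $[s,t]^2$ one has the uniform factor $e^{-2\theta(T-t)}\le 1$ coming out, and $\norm{\mathbbm{1}_{[s,t]}}_{\FH_1}^2\le C_\beta\abs{t-s}^{2\beta}\le C_\beta\abs{t-s}^{2\beta-1}$ when $\abs{t-s}\le 1$ (using $2\beta-1>0$); similarly $\int_s^t e^{-\theta(T-r)}r^{\beta-1}\dif r$ is bounded by $C\abs{t-s}^{\alpha}$ for a suitable $\alpha>0$ (by H\"older, or crudely by $C(t^\beta-s^\beta)\le C\abs{t-s}^\beta$ when $s,t$ are bounded, and by an $e^{-\theta(T-t)}\cdot\abs{t-s}^\beta$-type bound in general). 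Collecting the worst exponent across the $\FH_1$, $\FH_2$ and $\tensor{K}$ contributions of $\rho_1,\rho_2$ via \eqref{norm.ineq} gives $\E[\abs{H_t-H_s}^2]\le C'\abs{t-s}^{\alpha}$ for some $\alpha\in(0,1)$ (concretely $\alpha=2\beta-1$ will do).

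The main obstacle — and the reason the statement only claims an unspecified $\alpha\in(0,1)$ rather than a sharp exponent — is keeping all the estimates \emph{uniform in $T$}: the inner product $\innp{\cdot,\cdot}_{\FH}$ depends on $T$, and one must check at every step that the exponential decay factors $e^{-\theta(T-\cdot)}$ are exploited so that no hidden $T$-dependence (e.g. from $\int_0^T e^{\theta r}\,\dif r\sim e^{\theta T}$) creeps in. This is exactly the subtlety flagged in the Remark after Proposition~\ref{Ft 2 norm 1}, and it is handled precisely as there, by the change of variables $r\mapsto T-r$ together with Lemma~\ref{upper bound F}. Everything else is the same routine set of beta-function and Gamma-function integral estimates already carried out for $f_T$; I would simply indicate the parallel and record the resulting exponents, omitting the repeated computations.
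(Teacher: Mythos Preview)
Your plan is correct and matches the paper's own (omitted) proof: the paper simply states ``we apply the similar computations as above'' after the $f_T$ case, and the details you supply are exactly those computations. In fact the rank-one structure $h_T=\eta_T\otimes\eta_T$ you exploit makes the argument strictly easier than for $f_T$, and your boundedness computation for $\norm{h_T}_{\FH^{\otimes 2}}^2$ reappears verbatim later in the paper as part of Lemma~\ref{htto0} (see \eqref{ht h1} and \eqref{Kht bound}); your increment bound via $\rho_1=a\otimes a$, $\rho_2=a\otimes b+b\otimes a$ with $a=e^{-\theta(T-\cdot)}\mathbbm{1}_{[s,t]}$ and the estimates $\norm{a}_{\FH_1}^2,\norm{a}_{\FH_2}^2\le C\abs{t-s}^{2\beta}$ is exactly the intended route.
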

\begin{prop}\label{prop ft ht} Let $F_T$ and $H_T$ be given in \eqref{Ft} and \eqref{ht} respectively. If Hypothesis~\ref{hypthe 1} is satisfied,
we have
\begin{align}
\lim_{T\to \infty}\frac{F_T}{T}=0 \quad {\rm and} \quad \lim_{T\to \infty}\frac{H_T}{ {T}}=0, \qquad a.s..
\end{align}
\end{prop}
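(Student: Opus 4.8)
The plan is to reduce the almost-sure convergence of $F_T/T$ and $H_T/T$ to two ingredients: convergence along the integer subsequence $T=n$, obtained from a Borel--Cantelli argument fed by high-order moment bounds, and a uniform control of the oscillation of the processes $t\mapsto F_t$ and $t\mapsto H_t$ over the unit intervals $[n,n+1]$. The moment bounds are essentially free: since $F_t=I_2(\cdot)$ and $H_t=I_2(\cdot)$ live in the second Wiener chaos, hypercontractivity on a fixed chaos (see, e.g., \cite{Nua}, \cite{Nou 12}) gives $\E[|F_t|^{2m}]\le c_m(\E[|F_t|^2])^m$, and likewise for $H_t$ and for all increments $F_t-F_s$, $H_t-H_s$. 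Thus every $L^2$ estimate in Propositions~\ref{Ft 2 norm}, \ref{Ft 2 norm 1} and \ref{prop ht} upgrades to an $L^{2m}$ estimate for every $m\ge 1$.

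\emph{Step 1 (convergence along the integers).} By Proposition~\ref{Ft 2 norm}, $\E[|F_n|^2]\le C\,n^{q}$ with $q=\max\{1,4\beta-2\}<2$, with an extra $\log n$ factor at $\beta=\tfrac34$ that I absorb into $n^{q+\epsilon}$ for small $\epsilon>0$. Hence $\E[(F_n/n)^{2m}]\le C_m\,n^{m(q-2)}$, which is summable once $m>1/(2-q)$; Chebyshev and the Borel--Cantelli lemma then give $F_n/n\to 0$ a.s. For $H_n$, Proposition~\ref{prop ht} gives $\sup_n\E[|H_n|^2]<\infty$, so $\E[(H_n/n)^{2m}]\le C_m\,n^{-2m}$ is summable already for $m=1$, and $H_n/n\to 0$ a.s.

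\emph{Step 2 (oscillation over unit intervals).} For $|t-s|\le 1$ the bound in Proposition~\ref{Ft 2 norm 1} reduces to $\E[|F_t-F_s|^2]\le 3C|t-s|^{2\beta-1}$, since $4\beta>2\beta>2\beta-1>0$ on the relevant range; similarly Proposition~\ref{prop ht} gives $\E[|H_t-H_s|^2]\le C'|t-s|^{\alpha}$ with $\alpha\in(0,1)$. Crucially, by the Remark following Proposition~\ref{Ft 2 norm 1}, these constants do not depend on $T$. Hypercontractivity then yields $\E[|F_t-F_s|^{2m}]\le c_m|t-s|^{m(2\beta-1)}$ and $\E[|H_t-H_s|^{2m}]\le c_m|t-s|^{m\alpha}$. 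Fix $m$ large enough that $m(2\beta-1)>1$, $m\alpha>1$, and $m>1/(2-q)$; such $m$ exists since each requirement is just a lower bound. A dyadic chaining argument on $[n,n+1]$, summing $\E[\max_{0\le k<2^j}|F_{n+(k+1)2^{-j}}-F_{n+k2^{-j}}|^{2m}]\le 2^j c_m 2^{-jm(2\beta-1)}$ over $j\ge 0$ and using Minkowski's inequality, gives $\E[\sup_{n\le t\le n+1}|F_t-F_n|^{2m}]\le C_m$ with $C_m$ \emph{independent of $n$} (the continuous modification of $t\mapsto F_t$ needed to pass from dyadic points to the full interval comes from the same moment bound via Kolmogorov's criterion, since $m(2\beta-1)>1$). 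Therefore $\E[(n^{-1}\sup_{n\le t\le n+1}|F_t-F_n|)^{2m}]\le C_m n^{-2m}$ is summable, and Borel--Cantelli gives $n^{-1}\sup_{n\le t\le n+1}|F_t-F_n|\to 0$ a.s.; the identical argument applies to $H$ using $\alpha$ in place of $2\beta-1$.

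\emph{Conclusion and main obstacle.} Combining the two steps, for $t\in[n,n+1]$ we get $|F_t|/t\le n^{-1}\big(|F_n|+\sup_{n\le u\le n+1}|F_u-F_n|\big)\to 0$ a.s., and likewise $|H_t|/t\to 0$ a.s., which is the claim. The routine parts (the hypercontractivity constants, the geometric dyadic sum, the elementary inequality $4\beta>2\beta-1$, and the harmless $\log$ at $\beta=\tfrac34$) are standard. The one genuinely delicate point is the maximal estimate over $[n,n+1]$ with a constant \textbf{uniform in $n$}: this is available precisely because Propositions~\ref{Ft 2 norm 1} and \ref{prop ht} were proved with constants independent of $T$ (the content highlighted in the Remark after Proposition~\ref{Ft 2 norm 1}), so that the increment bounds on the blocks $[n,n+1]$ do not degrade as $n\to\infty$.
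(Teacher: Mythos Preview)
Your proof is correct and follows the same overall strategy as the paper: convergence along integers via Borel--Cantelli fed by hypercontractivity and the variance bounds of Propositions~\ref{Ft 2 norm} and \ref{prop ht}, plus control of the oscillation on $[n,n+1]$ via the $T$-independent increment bounds of Propositions~\ref{Ft 2 norm 1} and \ref{prop ht}, then combining. The one substantive difference is in Step~2: the paper invokes the Garsia--Rodemich--Rumsey inequality (through Proposition~3.4 of \cite{chw 17}) to obtain a pathwise bound $|F_t-F_s|\le R_{p,q}\,n^{q/p}$ on each block with a single random constant $R_{p,q}$ independent of $n$, whereas you run a direct dyadic chaining argument to bound $\E\big[\sup_{n\le t\le n+1}|F_t-F_n|^{2m}\big]$ uniformly in $n$ and then apply a second Borel--Cantelli. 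Your route is more elementary and self-contained (no external reference needed), while the paper's GRR route packages the oscillation control into a single almost-sure bound; both exploit exactly the same input, namely the $T$-uniform increment constants emphasized in the Remark after Proposition~\ref{Ft 2 norm 1}.
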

\begin{proof}
The proof is similar as \cite{chw 17}. We will only show $\lim_{T\to \infty}\frac{F_T}{T}=0$, and the other is similar. First,  when $\beta \in (\frac{1}{2}, \frac{3}{4}]$, Chebyshev's inequality, the hypercontractivity of multiple Wiener-It\^{o} integrals and Proposition~\ref{Ft 2 norm} imply that for any $\epsilon > 0$, 
\begin{equation*}P\left(\frac{F_n}{n} > \epsilon \right) \leq \frac{\mathbb{E}F_n^4}{n^4 \epsilon^4} \leq \frac{C\left(\mathbb{E}(F_n^2)\right)^2}{n^4 \epsilon^4}
 \leq C n^{-2}\log n \,.
 \end{equation*}When $\beta \in (\frac34, 1)$, we take an integer $p>\frac{1}{2(1-\beta)}$. Then we apply  Chebyshev's inequality to obtain that for any $\epsilon > 0$,
 \begin{equation*}P\left(\frac{F_n}{n} > \epsilon \right) \leq \frac{\mathbb{E}F_n^p}{n^p \epsilon^p} \leq \frac{C\left(\mathbb{E}(F_n^2)\right)^{p/2}}{n^p \epsilon^p}
 \leq \frac{C}{n^{2p(1-\beta )}}   \,. 
 \end{equation*} 
The Borel-Cantelli lemma implies for $\beta \in (\frac{1}{2}, 1)$,
\begin{align*}
\lim_{n\to \infty}\frac{F_n}{n}=0, \ a.s..
\end{align*} 

Second, Propositions~\ref{Ft 2 norm 1} implies that there exist two constants $\alpha\in (0,1 ),\,{ C_{\alpha, \beta}}>0$ independent of $T$  such that for any $\abs{t-s}\le 1$,
\begin{equation*}
\E[\abs{F_t-F_s}^2] \le {C_{\alpha, \beta}} \abs{t-s}^ {\alpha}.
\end{equation*}
Then the Garsia-Rodemich-Rumsey inequality implies that for any real number $p > \frac{4}{\alpha}, q >1$ and integer $n \ge1$, 
\begin{align*}
\abs{F_t-F_s}\le R_{p,q} n^{q/p} , \qquad \forall \ t,s\in [n,n+1] ,
\end{align*}where $R_{p,q}$ is a random constant independent of $n$ (see Proposition 3.4 of \cite{chw 17}).

Finally, since
\begin{align*}
\abs{\frac{F_T}{T}}\le \frac{1}{T}\abs{F_T-F_n}+ \frac{n}{T}\frac{\abs{F_n}}{n},
\end{align*}where $n=[T]$ is the biggest integer less than or equal to a real number $T$, we have $\frac{F_T}{T}$ converges to $0$ almost surely as $T\to \infty$.
\end{proof}

Proposition \ref{prop ft ht} implies $I_2(g_T) = \frac{F_T - H_T}{2\theta T} \to 0$ as $T \to \infty$ almost surely. Next we study the term $b_T$. 
\begin{prop}\label{prop bt lim} Let $b_T$ be given by \eqref{bt bt}. Suppose that Hypothesis~\ref{hypthe 1} holds. We have
\begin{align}
\lim_{T\to\infty}b_T = C_{\beta}  \Gamma(2\beta-1) \theta^{-2\beta}   >0.
\end{align}
\end{prop}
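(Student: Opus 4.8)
The plan is to compute $b_T$ explicitly using the inner product representation from Proposition~\ref{main prelim}. By definition \eqref{bt bt},
$$b_T=\frac{1}{T}\int_0^T\norm{e^{-\theta(t-\cdot)}\mathbbm{1}_{[0,t]}(\cdot)}_{\FH}^2\,\dif t,$$
so the first step is to handle the inner norm $\norm{e^{-\theta(t-\cdot)}\mathbbm{1}_{[0,t]}(\cdot)}_{\FH}^2$ for each fixed $t$. I would apply the inequality \eqref{norm.ineq2} from the proposition following the notation box, with $\phi(r)=e^{-\theta(t-r)}\mathbbm{1}_{[0,t]}(r)$, to write this as $\norm{\phi}_{\FH_1}^2$ plus an error bounded by $\norm{\phi}_{\FH_2}^2$. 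The $\FH_1$-part is the fractional-Brownian-motion computation already carried out in \cite{hu Nua 10}: one knows
$$\frac{1}{T}\int_0^T C_\beta\int_{[0,t]^2}e^{-\theta(t-r_1)}e^{-\theta(t-r_2)}\abs{r_1-r_2}^{2\beta-2}\dif r_1\dif r_2\,\dif t\ \longrightarrow\ C_\beta\Gamma(2\beta-1)\theta^{-2\beta}$$
as $T\to\infty$ (this is the constant appearing in the denominator of the second moment estimator and can be quoted from Lemma~5.3 of \cite{hu Nua 10} or recovered directly by the substitution $a=t-r_1$, $b=t-r_2$ and dominated convergence, since $\int_0^\infty\int_0^\infty e^{-\theta(a+b)}\abs{a-b}^{2\beta-2}\,\dif a\,\dif b=2\Gamma(2\beta-1)\theta^{-2\beta}\cdot\frac{1}{2}\cdot$const; the precise constant-chasing is routine).

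The second step is to show the $\FH_2$-error term is negligible after averaging. Using the definition of $\norm{\cdot}_{\FH_2}$,
$$\norm{e^{-\theta(t-\cdot)}\mathbbm{1}_{[0,t]}}_{\FH_2}^2=C_\beta'\int_{[0,t]^2}e^{-\theta(t-r_1)}e^{-\theta(t-r_2)}(r_1r_2)^{\beta-1}\dif r_1\dif r_2=C_\beta'\Big(\int_0^t e^{-\theta(t-r)}r^{\beta-1}\dif r\Big)^2,$$
and by Lemma~\ref{upper bound F} the inner integral is $O(1\wedge t^{\beta-1})$, so the whole expression is $O(1)$ uniformly in $t$ (in fact $O(t^{2\beta-2})$ for large $t$, which is even summable-ish). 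Hence $\frac1T\int_0^T\norm{\cdot}_{\FH_2}^2\,\dif t\le \frac{C}{T}\int_0^T(1\wedge t^{2\beta-2})\,\dif t\to 0$ since $2\beta-2\in(-1,0)$ makes the integral $O(T^{2\beta-1})=o(T)$. Combining with the first step gives the limit $C_\beta\Gamma(2\beta-1)\theta^{-2\beta}$, and positivity is immediate since $C_\beta,\Gamma(2\beta-1),\theta^{-2\beta}$ are all strictly positive for $\beta\in(\tfrac12,1)$.

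I do not expect a serious obstacle here; this proposition is essentially bookkeeping built on tools already assembled. The only mildly delicate point is justifying the interchange of the $\dif t$-average with the limit in the $\FH_1$-term — i.e.\ confirming that $\frac1T\int_0^T g(t)\,\dif t\to\lim_{t\to\infty}g(t)$ where $g(t)=\norm{e^{-\theta(t-\cdot)}\mathbbm{1}_{[0,t]}}_{\FH_1}^2$ converges as $t\to\infty$; this is a Cesàro-type statement that holds because $g$ is bounded and has a limit, so dominated convergence applied to $g(Ts)$ on $s\in[0,1]$ (or simply the elementary fact that Cesàro averages preserve limits) closes it. If one prefers, the entire computation can be done in one shot by changing variables to push everything onto $[0,\infty)^2$ and invoking dominated convergence with the integrable dominating function $e^{-\theta(a+b)}(\abs{a-b}^{2\beta-2}+\text{const})$, which sidesteps the averaging subtlety altogether.
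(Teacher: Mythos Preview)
Your proposal is correct and follows essentially the same route as the paper: split the norm via \eqref{norm.ineq2} into the $\FH_1$-piece (handled by the known fBm computation from \cite{hu Nua 10}/\cite{chenkl 19}) and an $\FH_2$-error controlled by Lemma~\ref{upper bound F}, then pass to the limit. The only cosmetic difference is that the paper first identifies $\lim_{T\to\infty}\norm{e^{-\theta(T-\cdot)}\mathbbm{1}_{[0,T]}}_{\FH}^2$ and then (implicitly) invokes the Ces\`aro argument you spell out, whereas you average each piece before taking the limit; these are the same computation.
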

\begin{proof}
From Lemma~\ref{upper bound F}, we obtain
\begin{align}
	 \norm{e^{-\theta (t-\cdot) }\mathbbm{1}_{[0,t]}(\cdot)}^2 _{\mathfrak{H}_2}& =C'_{\beta} \big(\int_{0}^t\,e^{-\theta(t-u)} u^{\beta-1}\dif u\big)^2\nonumber\\ 
	&\le C'_{\beta} C^2 t^{2(\beta-1)}, \label{bt estm}
\end{align} 
which together with Hypothesis~\ref{hypthe 1} stating $|\Psi(r,s)| \leq C_\beta'|rs|^{\beta-1}$, implies
\begin{align*}
\lim_{T\to\infty} \abs{\int_{[0,\,T]^2}e^{-\theta (T-r) -\theta (T-s)}\Psi(r,s) \dif r \dif s} \leq \lim_{T\to\infty} \abs{\int_0^T e^{-\theta (T-r)} r^{\beta-1} \dif r}^2 = 0.
\end{align*}
Then the identity (\ref{innp fg3}) implies that 
\begin{align*}
\lim_{T\to\infty}b_T&= \lim_{T\to\infty} \norm{e^{-\theta (T-\cdot) }\mathbbm{1}_{[0,T]}(\cdot)}^2 _{\mathfrak{H}}\\
&=   \lim_{T\to\infty} \norm{e^{-\theta (T-\cdot) }\mathbbm{1}_{[0,T]}(\cdot)}^2 _{\mathfrak{H}_1}+ \lim_{T\to\infty} \abs{\int_{[0,\,T]^2}e^{-\theta (T-r) -\theta (T-s)}\Psi(r,s) \dif r \dif s}\\
&=  C_{\beta}\Gamma(2\beta-1)\theta^{-2\beta},
\end{align*}
where the last step is from \cite{hu Nua 10} or  \cite{chenkl 19}.
\end{proof}
\begin{remark} \label{rem 36}
The upper bound (\ref{bt estm}) implies that as $T\to\infty$, the speed of convergence 
$$ \frac{1}{T}\int_0^T \dif t \int_{[0,\,t]^2}e^{-\theta (t-r) -\theta (t-s)}\Psi(r,s) \dif r \dif s\to 0$$ 
is at least $\frac{1}{T^{2(1-\beta)}}$. Lemma 3.2 of \cite{chenkl 19} implies that the speed of convergence 
$$\frac{1}{T}\int_0^T  \norm{e^{-\theta (t-\cdot) }\mathbbm{1}_{[0,t]}(\cdot)}^2 _{\mathfrak{H}_1}  \dif t \to  C_{\beta}\Gamma(2\beta-1)\theta^{-2\beta} $$ is at least $\frac{1}{T}$. By the identity (\ref{innp fg3}), there exists a constant $C>0$ such that when $T$  is large enough, 
	$$\abs{b_T- C_{\beta}\Gamma(2\beta-1)\theta^{-2\beta}} \leq \frac{C}{T^{2(1-\beta)}} .$$
\end{remark}

\noindent{\it Proof of Theorem~\ref{thm strong}.\,} From \eqref{gt ts}, \eqref{x square}, \eqref{Ft}, and \eqref{ph1 h2 norm},
\begin{align*}
\frac{1}{T} \int_0^T X_t^2\mathrm{d} t&=  I_2(g_T)+b_T=\frac{1}{2\theta}[\frac{F_T}{T} - \frac{H_T}{T}]+ b_T .
\end{align*} 
Proposition~\ref{prop ft ht} and \ref{prop bt lim} imply that 
\begin{equation*}
\lim_{T\to \infty}\frac{1}{T} \int_0^T X_t^2\mathrm{d} t = C_{\beta} \Gamma(2\beta-1) \theta^{-2\beta},\ a.s.,
\end{equation*}which implies that the second moment estimator $\tilde{\theta}_T$ is strongly consistent.

Since \begin{align*} \label{ratio 1}
 \hat{\theta}_T-\theta &=\frac{- \frac{1}{2T} F_T}{ I_2(g_T)+b_T}, 
\end{align*}
 Proposition~\ref{prop ft ht} and \ref{prop bt lim} imply that the least squares estimator $\hat{\theta}_T$ is also strongly consistent.
{\hfill\large{$\Box$}}

\section{The Asymptotic normality}

\begin{prop}\label{contraction ft} Let $\gamma $ be given as in Theorem~\ref{B-E bound thm}. 
When $\beta\in (\frac12,\,\frac34)$, there exists a constant $C_{\theta,\,\beta} > 0$ such that 
\begin{equation}\label{zhou ineq}
\frac{1}{T}\norm{f_T\otimes_{1} f_T}_{\mathfrak{H}^{\otimes 2}}\le 
\frac{C_{\theta,\,\beta}}{T^{\gamma}}.
\end{equation}
\end{prop}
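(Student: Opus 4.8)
The plan is to reduce the estimate on $\norm{f_T\otimes_1 f_T}_{\FH^{\otimes 2}}$ to the corresponding estimate involving only the fractional-Brownian-motion norm $\norm{\cdot}_{\FH_1}$, for which the bound $\frac{1}{T}\norm{f_T\otimes_1 f_T}_{\FH_1^{\otimes 2}}\le C_{\theta,\beta}T^{-\gamma}$ is already available from Lemma~5.3 of \cite{hu Nua 10} (for the sub-range $\beta\in(\tfrac12,\tfrac58)$ it gives the rate $T^{-1/2}$ up to the $\|f_T\|^2$ normalization, and from \cite{hu nua zhou 19} for $\beta\in[\tfrac58,\tfrac34)$ one gets the rate $T^{-(3-4\beta)}$, with the logarithmic correction at $\beta=\tfrac58$). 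So the core of the argument is to control the difference between the true contraction norm and its $\FH_1$-analogue using the three inequalities \eqref{norm.ineq2}, \eqref{norm.ineq}, \eqref{inner product 00.ineq} from the Proposition in Section~3, applied to $\varphi=f_T\otimes_1 f_T$.

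First I would write out $f_T\otimes_1 f_T$ explicitly: since $f_T(t,s)=e^{-\theta|t-s|}\one_{[0,T]^2}$, the contraction in the $\FH$ inner product is $(f_T\otimes_1 f_T)(t,s)=\innp{f_T(t,\cdot),f_T(s,\cdot)}_{\FH}$, which after using \eqref{innp fg3} is an integral of $e^{-\theta|t-u|}e^{-\theta|s-v|}\partial^2_{uv}R(u,v)$ over $[0,T]^2$. The key structural fact I would isolate is a pointwise bound of the form $|(f_T\otimes_1 f_T)(t,s)|\le C\,\psi(t)\psi(s)$ (or a sum of such products) where $\psi$ decays in a way that keeps the relevant double integrals of size $O(T)$ rather than $O(T^2)$; this mirrors the estimates \eqref{Kft bound}–\eqref{Kft bound 02} and the computations in the proof of Proposition~\ref{Ft 2 norm}, where $\tensor{K}f_T(r)\le Cr^{\beta-1}$. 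Concretely, I expect the $\FH_2$-contributions $\norm{f_T\otimes_1 f_T}_{\FH_2^{\otimes 2}}^2$ and the cross terms $\norm{\tensor{K}(f_T\otimes_1 f_T)}_{\FH_1}^2$ to be $O(T^{4\beta-2})$ or smaller — in any case $o(T^2)$ for $\beta<\tfrac34$ — by the same change-of-variables and Lemma~\ref{upper bound F} arguments already used repeatedly in Section~3. Dividing by $T^2$ and taking square roots then gives a bound of order $T^{\beta-1}$ or $T^{2\beta-2}$ for the error term, which is dominated by $T^{-\gamma}$ in each sub-range (one checks $2-2\beta\ge\gamma$ and $1-\beta\ge\gamma$ directly from the three-case definition of $\gamma$).

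Assembling: by \eqref{norm.ineq} applied to $\varphi=f_T\otimes_1 f_T$,
\[
\abs{\norm{f_T\otimes_1 f_T}_{\FH^{\otimes 2}}^2-\norm{f_T\otimes_1 f_T}_{\FH_1^{\otimes 2}}^2}\le \norm{f_T\otimes_1 f_T}_{\FH_2^{\otimes 2}}^2+2C_\beta'\norm{\tensor{K}(f_T\otimes_1 f_T)}_{\FH_1}^2,
\]
and I would show the right-hand side is $O(T^{4\beta-2})$. Since $\norm{f_T\otimes_1 f_T}_{\FH_1^{\otimes 2}}^2\le C_{\theta,\beta}T^{2-2\gamma}$ (the $\FH_1$-estimate), one gets $\norm{f_T\otimes_1 f_T}_{\FH^{\otimes 2}}^2\le C_{\theta,\beta}(T^{2-2\gamma}+T^{4\beta-2})\le C_{\theta,\beta}T^{2-2\gamma}$, because $4\beta-2\le 2-2\gamma$ in all three cases. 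Taking square roots and dividing by $T$ yields \eqref{zhou ineq}. The main obstacle I anticipate is bookkeeping: getting a clean pointwise bound on $f_T\otimes_1 f_T$ that is integrable against both $|r_1-r_2|^{2\beta-2}$ and $(r_1r_2)^{\beta-1}$ with the right power of $T$, and in particular tracking the boundary contributions coming from the $h_T$-type tails (the $e^{-\theta(T-\cdot)}$ pieces) which is exactly where the $\FH$-versus-$\FH_1$ discrepancy, and the worst power of $T$, tends to hide. If the direct pointwise route is too lossy near $\beta=\tfrac34$, the fallback is to expand $f_T\otimes_1 f_T$ as in \cite{hu nua zhou 19} into a handful of explicit exponential-times-power integrals and estimate each using Lemma~\ref{upper bound F}, which is the route already used to prove \eqref{Ft-norm 2}–\eqref{Ft-norm 3}.
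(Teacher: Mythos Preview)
There is a genuine gap in the reduction step. You apply \eqref{norm.ineq} with $\varphi=f_T\otimes_1 f_T$ and then claim that $\norm{f_T\otimes_1 f_T}_{\FH_1^{\otimes 2}}$ is the quantity already controlled in \cite{hu Nua 10,hu nua zhou 19}. But $\otimes_1$ is the contraction in $\FH$, i.e.\ $(f_T\otimes_1 f_T)(t,s)=\int_{[0,T]^2}f_T(t,v_1)f_T(s,v_2)\,\partial^2_{v_1v_2}R(v_1,v_2)\,\dif v_1\dif v_2$, which still carries the perturbation $\Psi$. The literature bound is for the purely fBm object $\norm{f_T\otimes_{1'}f_T}_{\FH_1^{\otimes 2}}$, where the \emph{inner} contraction also uses the kernel $|v_1-v_2|^{2\beta-2}$. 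Your mixed quantity $\norm{f_T\otimes_1 f_T}_{\FH_1^{\otimes 2}}$ is not the one that is known, so the inequality \eqref{norm.ineq} alone does not close the argument; you have only decoupled the outer norm, not the inner contraction.

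The paper repairs this by decoupling twice. First it splits the contraction itself pointwise via Hypothesis~\ref{hypthe 1}:
\[
|f_T\otimes_1 f_T|\le C_\beta\,(f_T\otimes_{1'}f_T)+C'_\beta\,(\tensor{K}f_T)\otimes(\tensor{K}f_T),
\]
so that $\norm{f_T\otimes_1 f_T}_{\FH^{\otimes 2}}^2\le C\big(\norm{f_T\otimes_{1'}f_T}_{\FH^{\otimes 2}}^2+\norm{\tensor{K}f_T}_{\FH}^4\big)$. Only then does it apply \eqref{norm.ineq} to $f_T\otimes_{1'}f_T$ to pass from $\FH^{\otimes 2}$ to $\FH_1^{\otimes 2}$ on the outer norm, where the known rate $T^{-\gamma}$ can finally be quoted. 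The error terms in that second step are controlled using the pointwise bound $(f_T\otimes_{1'}f_T)(u,v)\le C|u-v|^{2\beta-2}$ from Lemma~3.6 of \cite{chw 17}; this is exactly the ``clean pointwise bound'' you anticipate needing but do not identify. Note also that the resulting error terms on the squared norm are of order $T^{4(2\beta-1)}$, not $T^{4\beta-2}$ as you guess; this still yields $\frac{1}{T}\norm{\cdot}\le CT^{4\beta-3}\le CT^{-\gamma}$, but your stated power would not match the actual computation.
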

\begin{proof} Without loss of generality, we assume $C_{\beta}=C_{\beta}'=1$.
	Recall that
	\begin{align*}
	\left(f_T\otimes_1 f_T\right) (u_1,u_2) &:= \int_{[0,T]^2} f_T(u_1,v_1) f_T(u_2,v_2)\, \frac{\partial ^2 }{\partial v_1\partial v_2}R(v_1,\,v_2) \dif v_1\dif v_2.
	\end{align*}
	Denote
	$$\left(f_T\otimes_{1'} f_T\right) (u_1,u_2) :=  \int_{[0,T]^2} f_T(u_1,v_1) f_T(u_2,v_2)\, \abs{v_1- v_2}^{2\beta-2} \dif v_1\dif v_2.$$
Clearly, the functions $f_T$, $f_T \otimes_{1'} f_T $ and $ \tensor{K}f_T \otimes \tensor{K}f_T$ are positive on $[0,T]^2$, and
$$ \abs{f_T \otimes_1 f_T} \le 
 { C_\beta} \abs{f_T \otimes_{1'} f_T} + {  C'_\beta} \abs{\tensor{K}f_T \otimes \tensor{K}f_T}.$$
This implies
\begin{equation}
\|f_T \otimes_1 f_T \|^2_{\mathfrak{H}^{\otimes 2}} \le 2{  C}\big[  \norm{f_T \otimes_{1'} f_T}^2_{\mathfrak{H}^{\otimes 2}} + \norm{\tensor{K}f_T \otimes \tensor{K}f_T}^2_{\mathfrak{H}^{\otimes 2}}\big].\label{ft contraction sum}
\end{equation}
We first deal with the first item on the right hand side of \eqref{ft contraction sum}.  The inequality (\ref{norm.ineq}) implies that
\begin{align}\label{ft contraction 01}
\abs{ \norm{f_T \otimes_{1'} f_T}^2_{\mathfrak{H}^{\otimes 2}}- \norm{f_T \otimes_{1'} f_T}^2_{\mathfrak{H}_1^{\otimes 2}}}&\le \norm{f_T \otimes_{1'} f_T}^2_{\mathfrak{H}_2^{\otimes 2}}
 +2 C'_\beta  \norm{\tensor{K}(f_T \otimes_{1'} f_T)}^2_{\FH_1}.
\end{align}
By Theorem 5 in \cite{hu nua zhou 19} and its proof, and Lemma 5.4 of \cite{hu Nua 10} (see the archive version), we have
\begin{equation}\label{zhou ineq 02}
\frac{1}{T}\norm{f_T\otimes_{1'} f_T}_{\mathfrak{H}_1^{\otimes 2}}\le \frac{C_{\theta,\,\beta}}{T^{\gamma}} \,.
\end{equation}
Lemma 3.6 of \cite{chw 17}  also implies
 \begin{align*}
 \big(f_T \otimes_{1'} f_T \big)(u,v) \le C \abs{u-v}^{2\beta-2}\mathbbm{1}_{[0,\,T]^2}(u,v).
 \end{align*}
 As a result,
   \begin{equation}\label{ft tensor 1p}
	   \norm{f_T \otimes_{1'} f_T}^2_{\mathfrak{H}_2^{\otimes 2}} \leq C  \Big( \int_{[0,T]^2}\abs{u-v}^{2\beta-2} (uv)^{\beta -1}\dif u\dif v \Big)^2 = C T^{2(4\beta-2)},
   \end{equation}
and
\begin{align*}
\tensor{K}(f_T \otimes_{1'} f_T)(u) &\le C\mathbbm{1}_{[0,\,T]}(u) \int_{0}^T  \abs{u-v}^{2\beta-2} v^{\beta -1}\dif v\\
&=C\mathbbm{1}_{[0,\,T]}(u) \Big[\int_{0}^u  ( u-v)^{2\beta-2} v^{\beta -1}\dif v + \int_{u}^T  (v-u)^{2\beta-2} v^{\beta -1}\dif v\Big] \\
&\le C\mathbbm{1}_{[0,\,T]}(u) \Big[u^{3\beta -2} + u^{\beta -1} \int_{u}^T (v-u)^{2\beta-2}\dif v\Big]\\
& \le C \mathbbm{1}_{[0,\,T]}(u) T^{2\beta -1} u^{\beta-1}.
\end{align*}
Hence,
 \begin{align}\label{Kft tensor 1p}
 \norm{\tensor{K}(f_T \otimes_{1'} f_T)}^2_{\FH_1} \le C  T^{2(2\beta -1)} \int _0^T u^{\beta-1}\dif u\int_0^T v^{\beta-1}|u-v|^{2\beta -2}\dif v= C T^{4(2\beta-1)}.
 \end{align}
 Therefore, when $\beta\in (\frac12,\,\frac34)$,
 \begin{align*}
\lim_{T\to\infty} \frac{1}{T^2}\big[\norm{f_T \otimes_{1'} f_T}^2_{\mathfrak{H}_2^{\otimes 2}}
 +\norm{\tensor{K}(f_T \otimes_{1'} f_T)}^2_{\FH_1} \big]=0.
\end{align*}
By (\ref{ft contraction 01})-(\ref{Kft tensor 1p}), we have 
\begin{align}\label{ft contraction 1}
 \frac{1}{T }\norm{f_T \otimes_{1'} f_T}_{\mathfrak{H}^{\otimes 2}}\le \max\left(\frac{C_{\theta,\,\beta}}{T^{\gamma}}, \ T^{4\beta-3}\right) = \frac{C_{\theta,\,\beta}}{T^{\gamma}}.
\end{align}

Next, we deal with the second item on the right hand side of \eqref{ft contraction sum}.  
The inequality (\ref{Kft bound}) implies that 
\begin{align*}
\norm{\tensor{K}f_T  }^2_{\mathfrak{H}_2 } \leq \int_0^T r^{\beta-1} u^{\beta-1} |ru|^{\beta-1} dr du = CT^{2(2\beta-1)},
\end{align*} which together with the inequalities (\ref{Kft bound 02}) and (\ref{norm.ineq2}) implies that 
	$$\norm{\tensor{K}f_T}_{\FH}^2\le  C T^{2(2\beta-1)} .$$
Therefore, we have
\begin{align*}
\norm{\tensor{K}f_T \otimes \tensor{K}f_T}^2_{\mathfrak{H}^{\otimes 2}}= \norm{\tensor{K}f_T}_{\FH}^4\le  C T^{4(2\beta-1)}.
\end{align*}
Hence, 
\begin{align*}
 \frac{1}{T^2}\norm{\tensor{K}f_T\otimes \tensor{K}f_T}^2_{\mathfrak{H}^{\otimes 2}}\le CT^{2(4\beta-3)},
\end{align*}which together with (\ref{ft contraction 1}) and (\ref{ft contraction sum}) implies the desired (\ref{zhou ineq}).

\end{proof}
\noindent{\it Proof of Theorem~\ref{main thm 2}.\,} Denote a constant that depends on $\theta$ and $\beta$ as $$a:= C_{\beta}  \Gamma(2\beta-1) \theta^{-2\beta}.$$
First, Proposition ~\ref{Ft 2 norm}, Proposition \ref{contraction ft} and Theorem \ref{fm.theorem} imply that as $T\to\infty$,
\begin{equation} \label{asy norm Ft}
\frac{1}{2\sqrt{T}} F_T  \stackrel{ {law}}{\to} \mathcal{N}(0,\,\theta a^2\sigma_{\beta}^2).
\end{equation}
Second, we rewrite the identity (\ref{ratio 1}) as 
\begin{align*}  
\sqrt{T}( \hat{\theta}_T-\theta) =\frac{-\frac{1}{2\sqrt{T}} F_T}{ I_2(g_T)+b_T}.
\end{align*}
The Slutsky's theorem, Proposition~\ref{prop bt lim}  and the convergence result (\ref{asy norm Ft})  imply that the asymptotic normality (\ref{asy norm LSE}) holds. 
Third,  we can show the asymptotic normality of
 \begin{align}\label{asy norm moment}
 \sqrt{T}\Big(\frac{1}{T} \int_0^T X_s^2\mathrm{d} s - a \Big)  \stackrel{ {law}}{\to} \mathcal{N}(0,\,a^2\sigma_{\beta}^2/\theta).
 \end{align}
 In fact, we have
 \begin{align}\label{asy norm moment 2}
 \sqrt{T}\Big(\frac{1}{T} \int_0^T X_s^2\mathrm{d} s -  a \Big)&= \frac{1}{2\theta}\Big[\frac{F_T}{\sqrt{T}} - \frac{H_T}{\sqrt{T}}\Big]+ \sqrt{T} \big(b_T-a) .
 \end{align}
Remark~\ref{rem 36} implies that the speed of convergence $b_T\to a$ is at least $\frac{1}{T^{2(1-\beta)}}$. Hence, when $\beta\in (\frac12,\,\frac34)$, $
 \lim_{T\to\infty} \sqrt{T} \big(b_T-a)=0.$
 Proposition~\ref{prop ht} and the proof of Proposition~\ref{prop ft ht} imply that $\frac{H_T}{\sqrt{T}}\to 0 $ a.s. as $T\to\infty$. Thus, the Slutsky's theorem implies that (\ref{asy norm moment}) holds. Finally, since 
\begin{align*}
 \tilde{\theta}_T=\Big( \frac{1}{C_{\beta}  \Gamma(2\beta-1)T} \int_0^T X_s^2\mathrm{d} s\Big)^{-\frac{1}{2\beta}} ,
\end{align*}
 the delta method (see \cite[Example 27.2]{Billing 08}) implies that the asymptotic normality (\ref{asy norm SME}) holds. 
{\hfill\large{$\Box$}}\\

\section{the Berry-Ess\'{e}en bound}
We need several lemmas before the proof of Theorem~\ref{B-E bound thm}.  Without loss of generality, we assume $C_{\beta}=C_{\beta}'=1$. 
The following lemma is a result of Lemma 3.5 in \cite{chenkl 19}  and the inequalities (\ref{norm.ineq}), (\ref{ft fh2 norm 2})-(\ref{Kft bound 02}).
\begin{lemma}\label{2ft2}
The speed of convergence  $$\frac{\norm{f_T}_{\FH^{\otimes 2}}^2}{2\theta \sigma^2_{\beta}T} \to \left( C_{\beta}  \Gamma(2\beta-1) \theta^{-2\beta} \right)^2$$ is $\frac{1}{T^{3-4\beta}}$ as $T\to \infty$.
\end{lemma}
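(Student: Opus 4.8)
The plan is to reduce the statement about $\norm{f_T}_{\FH^{\otimes 2}}^2$ to the corresponding statement about $\norm{f_T}_{\FH_1^{\otimes 2}}^2$, which is precisely the fractional-Brownian-motion quantity already handled in \cite{chenkl 19}, and then control the discrepancy between the two norms using the triangle-type estimate \eqref{norm.ineq} of the Proposition together with the explicit power-of-$T$ bounds on the error terms that were derived in the proof of Proposition~\ref{Ft 2 norm}. Concretely, I would first invoke Lemma 3.5 of \cite{chenkl 19} (the fBm analogue), which asserts that
\begin{align*}
\abs{\frac{\norm{f_T}_{\FH_1^{\otimes 2}}^2}{2\theta\sigma_\beta^2 T} - \left(C_\beta\Gamma(2\beta-1)\theta^{-2\beta}\right)^2} \le \frac{C}{T^{3-4\beta}}
\end{align*}
for $T$ large, i.e. the convergence in \eqref{fth1} happens at rate $T^{-(3-4\beta)}$.

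Next I would quote \eqref{norm.ineq} with $\varphi = f_T$, namely
\begin{align*}
\abs{\norm{f_T}_{\FH^{\otimes 2}}^2 - \norm{f_T}_{\FH_1^{\otimes 2}}^2} \le \norm{f_T}_{\FH_2^{\otimes 2}}^2 + 2C_\beta' \norm{\tensor{K}f_T}_{\FH_1}^2,
\end{align*}
and then insert the two bounds already established in the proof of Proposition~\ref{Ft 2 norm}: inequality \eqref{ft fh2 norm 2} gives $\norm{f_T}_{\FH_2^{\otimes 2}}^2 \le C T^{4\beta-2}$, and inequality \eqref{Kft bound 02} gives $\norm{\tensor{K}f_T}_{\FH_1}^2 \le C T^{4\beta-2}$. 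Hence the whole discrepancy is $O(T^{4\beta-2})$, so dividing by $2\theta\sigma_\beta^2 T$ produces an error of order $T^{4\beta-3} = T^{-(3-4\beta)}$, which matches the rate coming from the $\FH_1$ term. Combining the two estimates by the triangle inequality yields
\begin{align*}
\abs{\frac{\norm{f_T}_{\FH^{\otimes 2}}^2}{2\theta\sigma_\beta^2 T} - \left(C_\beta\Gamma(2\beta-1)\theta^{-2\beta}\right)^2} \le \frac{C}{T^{3-4\beta}},
\end{align*}
which is the claim.

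There is essentially no serious obstacle here: the lemma is a bookkeeping assembly of results stated or proved earlier in the paper, and the only thing to check is that the exponents line up, i.e. that $4\beta-2 < 1$ (equivalently $\beta < 3/4$), so that the correction terms are genuinely lower order and do not spoil the rate $3-4\beta$ inherited from the fractional-Brownian-motion computation — and this is exactly the standing hypothesis $\beta \in (\tfrac12,\tfrac34)$ under which Theorem~\ref{B-E bound thm} is stated. The one point requiring a little care is that I should state the result as a two-sided estimate (the absolute value of the difference bounded by $C T^{-(3-4\beta)}$) rather than a limit, since that is the form in which it will be fed into Theorem~\ref{kp} in the proof of the Berry--Ess\'een bound \eqref{b-e bound 34}; phrasing it merely as ``the speed of convergence is $\frac{1}{T^{3-4\beta}}$'' is shorthand for this quantitative inequality.
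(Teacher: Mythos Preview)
Your proposal is correct and is exactly the argument the paper gives: the paper states the lemma as ``a result of Lemma 3.5 in \cite{chenkl 19} and the inequalities (\ref{norm.ineq}), (\ref{ft fh2 norm 2})--(\ref{Kft bound 02}),'' which is precisely the decomposition and the three bounds you assemble. One minor wording point: the correction terms are not lower order but of the \emph{same} order $T^{-(3-4\beta)}$ as the $\FH_1$ contribution, which is why the overall rate is $T^{-(3-4\beta)}$ rather than something faster.
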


\begin{lemma}\label{htto0}
Let $f_T,\, h_T$ be given in (\ref{ft ts 000}) and (\ref{ht ts}). There exists a constant $C>0$ independent of $T$ such that 
\begin{align*}
\norm{\frac{1}{\sqrt{T}} h_T}_{\mathfrak{H}^{\otimes 2}}^2\le \frac{C}{T},
\end{align*}
and 
\begin{align*}
\abs{\innp{\frac{1}{\sqrt{T}} f_T,\, \frac{1}{\sqrt{T}} h_T}_{\mathfrak{H}^{\otimes 2}}}\le \frac{C}{T}(1+ T^{3\beta-2}).
\end{align*}
\end{lemma}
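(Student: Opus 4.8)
The plan is to estimate each of the two quantities by passing from the Hilbert space $\FH$ to the reference norms $\norm{\cdot}_{\FH_1}$ and $\norm{\cdot}_{\FH_2}$ via the inequalities \eqref{norm.ineq} and \eqref{inner product 00.ineq}, exactly as in the proofs of Propositions~\ref{Ft 2 norm} and \ref{Ft 2 norm 1}. The key structural feature of $h_T(r,u)=e^{-\theta(T-r)-\theta(T-u)}\mathbbm{1}_{\{0\le r,u\le T\}}$ is that it factorizes as a tensor product $h_T = \eta_T\otimes\eta_T$ with $\eta_T(r)=e^{-\theta(T-r)}\mathbbm{1}_{[0,T]}(r)$, so that all the tensor-square norms collapse: $\norm{h_T}_{\FH^{\otimes2}}^2=\norm{\eta_T}_{\FH}^4$, $\norm{h_T}_{\FH_1^{\otimes2}}^2=\norm{\eta_T}_{\FH_1}^4$, $\norm{h_T}_{\FH_2^{\otimes2}}^2=\norm{\eta_T}_{\FH_2}^4$, and $\tensor{K}h_T(r,u)$ factors through $\tensor{K}$ applied to $\eta_T$ in one slot. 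So the first estimate reduces to showing $\norm{\eta_T}_{\FH}^2\le C$ uniformly in $T$, which is essentially the content already extracted in Proposition~\ref{prop bt lim} and Remark~\ref{rem 36} (the quantity $b_T$ is the time-average of exactly such norms, and the pointwise bound $\norm{\eta_t}_{\FH}^2\to C_\beta\Gamma(2\beta-1)\theta^{-2\beta}$ with the estimate $\norm{\eta_T}_{\FH_2}^2\le C T^{2(\beta-1)}$ from \eqref{bt estm}). Hence $\norm{T^{-1/2}h_T}_{\FH^{\otimes2}}^2 = T^{-2}\norm{\eta_T}_{\FH}^4\le C/T^2\le C/T$; in fact one gets the sharper $C/T^2$, but $C/T$ suffices.

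For the inner product, I would first apply \eqref{inner product 00.ineq} to write
\begin{align*}
\abs{\innp{f_T,h_T}_{\FH^{\otimes2}}}\le \abs{\innp{f_T,h_T}_{\FH_1^{\otimes2}}} + \abs{\innp{f_T,h_T}_{\FH_2^{\otimes2}}} + 2C_\beta'\abs{\innp{\tensor{K}f_T,\tensor{K}h_T}_{\FH_1}}.
\end{align*}
The $\FH_1$ term is the pure fractional-Brownian-motion computation, which is handled in \cite{hu Nua 10} (Lemma~5.3 and its companions) or \cite{hu nua zhou 19}: there one shows $\abs{\innp{f_T,h_T}_{\FH_1^{\otimes2}}}\le C$ uniformly in $T$ (the cross term between the "stationary" kernel $f_T$ and the boundary kernel $h_T$ is bounded, not growing, because $h_T$ is concentrated near $t=T$). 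The $\FH_2$ term and the $\tensor{K}$-term are the genuinely new pieces: using $\tensor{K}f_T(r)\le Cr^{\beta-1}$ from \eqref{Kft bound} and the analogous bound $\tensor{K}h_T(r)\le C e^{-\theta(T-r)}\cdot(\text{something})$, together with the change of variables $r=T-v$ and Lemma~\ref{upper bound F}, each of these should be bounded by a constant times $T^{3\beta-2}$ (coming from a single factor like $\int_0^T e^{-\theta(T-r)}r^{\beta-1}\,dr\le C T^{\beta-1}$ paired against $\int_0^T r^{\beta-1}\,dr\le C T^\beta$, or similar), which after dividing by $T$ gives the stated $\frac{C}{T}(1+T^{3\beta-2})$.

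The main obstacle I anticipate is the bookkeeping for the cross terms in the $\FH_1$ inner product $\innp{f_T,h_T}_{\FH_1^{\otimes2}}$ and $\innp{\tensor{K}f_T,\tensor{K}h_T}_{\FH_1}$: one must split the region of integration according to the relative positions of the four (resp. two) variables and exploit the exponential decay of $h_T$ away from $t=T$ to see that no growing power of $T$ is produced beyond the $T^{3\beta-2}$ term. This is the same type of estimate as in Propositions~\ref{Ft 2 norm 1} and \ref{prop ht}, so I would cite those computations and Lemma~\ref{upper bound F} rather than redo them, and simply track which terms survive after normalizing by $T$. Once the three pieces are bounded by $C$, $CT^{3\beta-2}$, and $CT^{3\beta-2}$ respectively, dividing by $T$ yields $\abs{\innp{T^{-1/2}f_T,T^{-1/2}h_T}_{\FH^{\otimes2}}}\le \frac{C}{T}(1+T^{3\beta-2})$, completing the proof.
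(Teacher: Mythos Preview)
Your approach is essentially the same as the paper's: factor $h_T=\eta_T\otimes\eta_T$ to reduce the first bound to $\norm{\eta_T}_{\FH}^2\le C$, and for the cross term apply \eqref{inner product 00.ineq} and estimate the three pieces $\innp{f_T,h_T}_{\FH_1^{\otimes2}}$, $\innp{f_T,h_T}_{\FH_2^{\otimes2}}$, $\innp{\tensor{K}f_T,\tensor{K}h_T}_{\FH_1}$ separately. Two corrections are worth making. First, an arithmetic slip: $\norm{T^{-1/2}h_T}_{\FH^{\otimes2}}^2=T^{-1}\norm{\eta_T}_{\FH}^4$, not $T^{-2}$, so the bound is exactly $C/T$ and not sharper. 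Second, the $\FH_1$ cross term is not a direct citation from \cite{hu Nua 10}: the paper devotes roughly half a page to it, splitting the integral according to whether $t_1$ or $t_2$ is largest and applying L'H\^opital's rule in $T$ to show the resulting quantities converge to finite limits (invoking the explicit constant from \cite{hu Nua 10} only at the final step). You correctly flag this as the main obstacle, but be aware it is genuine work rather than a lookup. The paper also obtains the finer bound $CT^{4\beta-3}$ for the $\FH_2$ piece (versus your $CT^{3\beta-2}$), though this does not affect the final estimate since the $\tensor{K}$-term dominates.
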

\begin{proof}
It is equivalent to show there exists a constant $C>0$ independent of $T$ such that 
\begin{align} 
\norm{   h_T}_{\mathfrak{H}^{\otimes 2}}^2\le C, \label{ht to 011}\\
\abs{\innp{  f_T,\,  h_T}_{\mathfrak{H}^{\otimes 2}}} \le C(1+T^{3\beta-2}). \label{ht ft to 011}
\end{align}
First,  we have
\begin{align}
 \norm{h_T}_{\FH_1^{\otimes 2}}^2&=  \norm{e^{-\theta (T-\cdot)} \mathbbm{1}_{[0,T]}(\cdot)}_{\FH_1 }^4 \nonumber\\
&= \left( \int_{[0,T]^2}e^{-\theta[(T-t_1)+(T-t_2) ]}\abs{t_1-t_2}^{2\beta-2} \dif  {t_1}\dif  {t_2} \right)^2 \nonumber\\
&= 4\left( \int_{0<t_1<t_2<T}e^{-\theta[(T-t_1)+(T-t_2) ]} (t_2-t_1)^{2\beta-2} \dif  {t_1}\dif  {t_2} \right)^2 \nonumber\\
&=4\left( \int_{[0,T]^2,\,0<a+b<T}e^{-\theta(2a+b )}b^{2\beta-2} \dif  a\dif  b \right)^2 \nonumber\\
& \leq \left ( { \Gamma(2\beta -1)}{\theta^{-2\beta}}\right)^2, \label{ht h1}
\end{align}
where we have made the change of variables $T-t_2 \to a, \ t_2 - t_1 \to b$ in the fourth step.
We also have
\begin{align*}
 \norm{h_T}_{\FH_2^{\otimes 2}}^2&= \norm{e^{-\theta (T-\cdot)} \mathbbm{1}_{[0,T]}(\cdot)}_{\FH_2 }^4 = \left( \int_0^T e^{-\theta (T- r)}r^{\beta -1}\dif r  \right)^4 \le C,
\end{align*} where for the inequality we have used Lemma~\ref{upper bound F}. In addition,   Lemma~\ref{upper bound F} also implies 
\begin{align}\tensor{K}h_T(\cdot)= e^{-\theta (T-\cdot)}\mathbbm{1}_{[0,T]}(\cdot)\int_{0}^T e^{-\theta (T-s)}s^{\beta-1}\dif s \le C e^{-\theta (T-\cdot)}\mathbbm{1}_{[0,T]}(\cdot),\label{Kht bound}
\end{align}  
 and
\begin{align*}
 \norm{\tensor{K}h_T}_{\FH_1}^2&\le C\norm{e^{-\theta (T-\cdot)} \mathbbm{1}_{[0,T]}(\cdot)}_{\FH_1 }^2 \le C C_\beta \Gamma(2\beta-1)\theta^{-2\beta},
\end{align*} 
where the last step is from the proof of Proposition \ref{prop bt lim}. 
By the inequality (\ref{norm.ineq}), we have the desired (\ref{ht to 011}).

Next, the inequality (\ref{inner product 00.ineq})  implies that 
\begin{align}\label{inner product.ineq}
\abs{\innp{  f_T,\,  h_T}_{\mathfrak{H}^{\otimes 2}}}\le \abs{\innp{  f_T,\,  h_T}_{\mathfrak{H}_1^{\otimes 2}} }+ \abs{\innp{  f_T,\,  h_T}_{\mathfrak{H}_2^{\otimes 2}}}+2C_{\beta}'\abs{\innp{\tensor{K} f_T,\, \tensor{K} h_T}_{\mathfrak{H}_1}}.
\end{align}
Denote $\vec{t}=(t_1,t_2),\,\, \vec{s}=(s_1,s_2)$. The symmetry of the functions $f_T$ and $h_T$ implies that 
\begin{eqnarray*}
\innp{f_T,\,  h_T}_{\mathfrak{H}_1^{\otimes 2}} & = & \int_{[0,T]^4}\,e^{-\left(T-t_1+ T-s_1+\abs{t_2-s_2} \right)\theta} \abs{t_1-t_2}^{2\beta-2}\abs{s_1-s_2}^{2\beta-2}\dif \vec{t}\dif \vec{s} \\
 & = & 2\int_{0\le s_1,t_2,s_2\le t_1\le T}\,e^{-\left(T-t_1+ T-s_1+\abs{t_2-s_2} \right)\theta} \abs{t_1-t_2}^{2\beta-2}\abs{s_1-s_2}^{2\beta-2}\dif \vec{t}\dif \vec{s}\\
 & & + \ 2\int_{0\le s_1,t_1,s_2\le t_2\le T}\,e^{-\left(T-t_1+ T-s_1+\abs{t_2-s_2} \right)\theta} \abs{t_1-t_2}^{2\beta-2}\abs{s_1-s_2}^{2\beta-2}\dif \vec{t}\dif \vec{s}.
\end{eqnarray*}
The L'H\^{o}pital's rule implies that 
\begin{align*}
&\lim_{T\to \infty}\int_{0\le s_1,t_2,s_2\le t_1\le T}\,e^{-\left(T-t_1+ T-s_1+\abs{t_2-s_2} \right)\theta} \abs{t_1-t_2}^{2\beta-2}\abs{s_1-s_2}^{2\beta-2}\dif \vec{t}\dif \vec{s}\\
&=\lim_{T\to \infty}\frac{1}{2\theta  } \int_{[0,T]^3} e^{-\left(  T-s_1+\abs{t_2-s_2} \right)\theta} (T-t_2 )^{2\beta-2}\abs{s_1-s_2}^{2\beta-2}\dif {t}_2\dif \vec{s}\\
&=\frac{4\beta-1}{4 \theta^{4\beta}}\Gamma^{2}(2\beta-1)\left( 1+ \frac{\Gamma(4\beta-1)\Gamma(3-4\beta)}{\Gamma(2-2\beta)\Gamma(2\beta)}\right),
\end{align*}where the last limit is from \cite{hu Nua 10}. The L'H\^{o}pital's rule implies that 
\begin{align*}
& \lim_{T\to \infty} \int_{0\le s_1,t_1,s_2\le t_2\le T}\,e^{-\left(T-t_1+ T-s_1+\abs{t_2-s_2} \right)\theta} \abs{t_1-t_2}^{2\beta-2}\abs{s_1-s_2}^{2\beta-2}\dif \vec{t}\dif \vec{s}\\
&=\lim_{T\to \infty}\frac{1}{2\theta  } \int_{[0,T]^3} e^{-\left( T-t_1+ T-s_1+T-s_2 \right)\theta} (T-t_1 )^{2\beta-2}\abs{s_1-s_2}^{2\beta-2}\dif {t}_1\dif \vec{s}\\
& = C \lim_{T\to \infty} \int_{[0,T]^2} e^{-\left(  T-s_1+T-s_2 \right)\theta}  \abs{s_1-s_2}^{2\beta-2} \dif \vec{s}\\
& = C { \lim_{T \to \infty} \|e^{-\theta(T-\cdot)} \mathbbm{1}_{[0,T]} (\cdot) \|_{\mathfrak{H}_1} \leq C } \,,
\end{align*}
{where we have used \eqref{ht h1} in the last step.} Hence we have $\abs{\innp{  f_T,\,  h_T}_{\mathfrak{H}_1^{\otimes 2}}} \leq C$. \\
The symmetry and Lemma~\ref{upper bound F} imply that 
\begin{align*}
\innp{  f_T,\,  h_T}_{\mathfrak{H}_2^{\otimes 2}}&=\int_{[0,T]^4}\,e^{-\left(T-t_1+ T-s_1+\abs{t_2-s_2} \right)\theta} (t_1 t_2 s_1s_2)^{ \beta-1}\dif \vec{t}\dif \vec{s}\\
&= \left(\int_0^T e^{-(T-t_1)\theta }t_1^{\beta-1}\dif t_1\right)^2\int_{[0,T]^2} e^{-  \theta\abs{t_2-s_2} } (  t_2  s_2)^{ \beta-1}\dif {t}_2\dif  {s}_2\\
&\le C T^{2(\beta-1)}\int_{[0,T]^2} e^{-  \theta \abs{t_2-s_2} } (  t_2  s_2)^{ \beta-1}\dif {t}_2\dif  {s}_2\\
&\le  C T^{4\beta-3},
\end{align*} where the last line is from the inequality (\ref{upper bound 0011}).\\
The inequalities (\ref{Kft bound}) and (\ref{Kht bound}) imply that 
\begin{align*}
\innp{\tensor{K} f_T,\, \tensor{K} h_T}_{\mathfrak{H}_1}&\le C \int_{[0,T]^2} u^{\beta-1} e^{-\theta (T-v)} \abs{u-v}^{2\beta-2} \dif u\dif v.
\end{align*}
We consider the right hand side on two subregions of $[0,T]^2$. On  $\set{0\le u\le v \le T }$, we make the change of variable $a=v-u$ and apply the proof of Lemma~\ref{upper bound F} to obtain
\begin{align*}
\int_{0\le u\le v \le T} u^{\beta-1} e^{-\theta (T-v)} \abs{u-v}^{2\beta-2} \dif u\dif v&=\int_{0\le a\le v \le T} (v-a)^{\beta-1} e^{-\theta (T-v)} a^{2\beta-2} \dif a\dif v\\
&=\int_0^1 (1-x)^{\beta-1} x^{2\beta-2} dx \cdot \int_0^T e^{-\theta (T-v)} v^{3\beta-2} \dif v \\
&\le C T^{3\beta-2}.
\end{align*} On  $\set{0\le v\le u \le T }$, we make the change of variable $a=u-v$ and apply Lemma~\ref{upper bound F} to obtain
\begin{align*}
\int_{0\le v\le u \le T} u^{\beta-1} e^{-\theta (T-v)} \abs{u-v}^{2\beta-2} \dif u\dif v&=\int_{0\le a\le u \le T} u^{\beta-1} e^{-\theta (T-(u-a))} a^{2\beta-2} \dif a\dif u\\
&\le \frac{\Gamma(2\beta-1)}{\theta^{2\beta-1}}\int_0^T u^{\beta-1} e^{-\theta (T-u)} \dif u \\
&\le C.
\end{align*}  Hence, we have $\innp{\tensor{K} f_T,\, \tensor{K} h_T}_{\mathfrak{H}_1}\le C (1+ T^{3\beta-2})$. Substituting the upper bounds of $$ {\innp{  f_T,\,  h_T}_{\mathfrak{H}_1^{\otimes 2}}},\, \innp{  f_T,\,  h_T}_{\mathfrak{H}_2^{\otimes 2}},\, \innp{\tensor{K} f_T,\, \tensor{K} h_T}_{\mathfrak{H}_1}$$ into the inequality (\ref{inner product.ineq}), we have the desired (\ref{ht ft to 011}).  
\end{proof}
 \begin{proposition}\label{cor b-e 2}
 Denote $a:=C_{\beta} \Gamma(2\beta-1) \theta^{-2\beta}$ and $$Q_T := \frac{F_T-H_T }{ \sqrt{T}} $$ where $F_T$ and $H_T$ are given in \eqref{Ft} and \eqref{ht} respectively. The constant $\gamma $ that depends on $\beta$ is defined in Theorem~\ref{B-E bound thm}. Then there exists a constant $C_{\theta, \beta}$ such that when $T$ is large enough,
 \begin{align}
 \sup_{z\in \Rnum}\abs{ P(\frac{1}{\sqrt{ 4\theta a^2\sigma_{\beta}^2}} Q_T\le z)- P(Z\le z)}\le \frac{ C_{\theta, \beta}}{{T^{\gamma}}}.
 \end{align}
 \end{proposition}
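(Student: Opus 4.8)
The plan is to apply Theorem~\ref{kp} with the choices $\varphi_T = \frac{1}{\sqrt{T}}(f_T - h_T)$, $\psi_T = 0$, and $b_T = \sqrt{4\theta a^2 \sigma_\beta^2}$, so that $\frac{1}{\sqrt{4\theta a^2\sigma_\beta^2}} Q_T = \frac{I_2(\varphi_T)}{I_2(\psi_T) + b_T}$. With $\psi_T = 0$ the quantities $\Psi_2(T)$ and $\Psi_3(T)$ vanish identically, so only $\Psi_1(T)$ needs to be controlled, and since $b_T$ is a positive constant it suffices to bound the two terms $\big|b_T^2 - 2\norm{\varphi_T}_{\FH^{\otimes 2}}^2\big|$ and $\norm{\varphi_T \otimes_1 \varphi_T}_{\FH^{\otimes 2}}$.

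First I would handle the variance term. Writing $\varphi_T \otimes_1 \varphi_T$ bilinearly in $f_T$ and $h_T$, it suffices to control $\norm{f_T \otimes_1 f_T}_{\FH^{\otimes 2}}$, $\norm{h_T \otimes_1 h_T}_{\FH^{\otimes 2}}$ and the cross contractions. Proposition~\ref{contraction ft} gives $\frac{1}{T}\norm{f_T \otimes_1 f_T}_{\FH^{\otimes 2}} \le C_{\theta,\beta} T^{-\gamma}$ directly. For the terms involving $h_T$, I would use that $\norm{\frac{1}{\sqrt T} h_T}_{\FH^{\otimes 2}}^2 \le C/T$ from Lemma~\ref{htto0} together with the Cauchy--Schwarz-type bound $\norm{u \otimes_1 v}_{\FH^{\otimes 2}} \le \norm{u}_{\FH^{\otimes 2}}\norm{v}_{\FH^{\otimes 2}}$ for contractions, which makes all $h_T$-contributions of order $T^{-1/2}$, hence dominated by $T^{-\gamma}$ since $\gamma \le \frac12$. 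Thus $\frac{1}{T}\norm{\varphi_T \otimes_1 \varphi_T}_{\FH^{\otimes 2}} = \norm{\varphi_T \otimes_1 \varphi_T}_{\FH^{\otimes 2}} \le C_{\theta,\beta} T^{-\gamma}$ (note $\norm{\varphi_T \otimes_1 \varphi_T}$ is already normalized by the $1/T$ in $\varphi_T$).

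Next I would handle $\big|b_T^2 - 2\norm{\varphi_T}_{\FH^{\otimes 2}}^2\big|$. Expanding, $2\norm{\varphi_T}_{\FH^{\otimes 2}}^2 = \frac{2}{T}\norm{f_T}_{\FH^{\otimes 2}}^2 - \frac{4}{T}\innp{f_T, h_T}_{\FH^{\otimes 2}} + \frac{2}{T}\norm{h_T}_{\FH^{\otimes 2}}^2$. By Lemma~\ref{2ft2}, $\frac{2}{T}\norm{f_T}_{\FH^{\otimes 2}}^2 \to 4\theta\sigma_\beta^2 a^2 = b_T^2$ at rate $T^{-(3-4\beta)}$, while Lemma~\ref{htto0} shows the other two terms are $O(T^{-1})$ and $O(T^{-1}(1+T^{3\beta-2})) = O(T^{\max(-1,\,3\beta-3)})$ respectively, both of which are $o(T^{-\gamma})$ for $\beta \in (\frac12,\frac34)$ (one checks $3 - 4\beta \le \gamma$ fails — actually $\gamma = \min(\frac12, 3-4\beta)$ up to the endpoint, so $3-4\beta \ge \gamma$ precisely, making this the dominant term in the relevant range). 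Combining, $\Psi_1(T) \le \frac{1}{b_T^2}\big(|b_T^2 - 2\norm{\varphi_T}^2| + 2\sqrt{2}\,\norm{\varphi_T \otimes_1 \varphi_T}\big) \le C_{\theta,\beta} T^{-\gamma}$, and Theorem~\ref{kp} gives the claimed Berry--Esseen bound.

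The main obstacle I anticipate is bookkeeping the exponents rather than any conceptual difficulty: one must verify that every error term arising from the $h_T$ corrections and from the $\FH_2$/$\tensor{K}$ corrections to the $\FH_1$-norms is genuinely $O(T^{-\gamma})$ across the whole range $\beta \in (\frac12, \frac34)$, including the delicate endpoint $\beta = \frac58$ where $\gamma = \frac12-$. This requires comparing $3-4\beta$, $4\beta - 3$, $3\beta - 2$, $-\frac12$ and $-1$ and confirming that $\gamma = \min\{\frac12, 3-4\beta\}$ (with the stated $\frac12-$ convention at $\beta = \frac58$) is indeed the worst rate among $\frac{1}{T}\norm{f_T \otimes_{1'} f_T}_{\FH_1^{\otimes 2}}$ (via \cite{hu nua zhou 19}) and all the secondary contributions; the $h_T$ terms, being uniformly $O(T^{-1/2})$ or better, never dominate.
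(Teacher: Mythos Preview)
Your approach is correct and somewhat more streamlined than the paper's. Both proofs rely on exactly the same three inputs---Proposition~\ref{contraction ft} for $\frac{1}{T}\norm{f_T\otimes_1 f_T}_{\FH^{\otimes 2}}$, Lemma~\ref{2ft2} for the rate of $\frac{1}{T}\norm{f_T}_{\FH^{\otimes 2}}^2$, and Lemma~\ref{htto0} for the $h_T$ corrections---but package them differently. The paper invokes the fourth-moment Berry--Ess\'een bound (Corollary~5.2.10 of \cite{Nou 12}) and then expands $\E[Q_T^4]-3\E[Q_T^2]^2$ term by term, which forces a careful treatment of the cross moment $\E[F_T^3 H_T]$ via the product formula~\eqref{ito.prod}. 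You instead apply Theorem~\ref{kp} in its degenerate form $\psi_T=0$, so everything reduces to $\Psi_1(T)$, and the $h_T$ contributions to $\norm{\varphi_T\otimes_1\varphi_T}_{\FH^{\otimes 2}}$ are dispatched in one line by the contraction inequality $\norm{u\otimes_1 v}_{\FH^{\otimes 2}}\le \norm{u}_{\FH^{\otimes 2}}\norm{v}_{\FH^{\otimes 2}}$. This bypasses the product-formula computation entirely and is arguably the cleaner route; the paper's expansion is more explicit but yields nothing extra. Two minor remarks: your reuse of the symbol $b_T$ for the constant $\sqrt{4\theta a^2\sigma_\beta^2}$ clashes with the paper's $b_T$ in~\eqref{bt bt}, so rename it; and the exponent check you flagged as the ``main obstacle'' is routine once you note that $3-4\beta\ge \gamma$ throughout $(\tfrac12,\tfrac34)$ and that all $h_T$-terms are $O(T^{-1/2})$ or better, hence $O(T^{-\gamma})$.
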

 \begin{proof} It follows from (\ref{asy norm moment})-(\ref{asy norm moment 2}) that 
 \begin{align*}
 \frac{1}{\sqrt{ 4\theta a^2\sigma_{\beta}^2}}\, Q_T \stackrel{ {law}}{\to}  \mathcal{N}(0,1).
 \end{align*}
 By the Fourth moment Berry-Ess\'{e}en bound (see, for example, Corollary 5.2.10 of \cite{Nou 12}), we have
 \begin{align*}
&\quad \sup_{z\in \Rnum}\abs{ P(\frac{1}{ \sqrt {4\theta a^2\sigma_{\beta}^2}} \, Q_T\le z)- P(Z\le z)}\\
 &\le\sqrt{\frac{\mathbb{E}[Q_T^4]-3 \mathbb{E}[Q_T^2]^2}{3  \mathbb{E}[Q_T^2]^2} } + \frac{\abs{\mathbb{E}[Q_T^2]- {4}\theta a^2\sigma_{\beta}^2 }}{\mathbb{E}[Q_T^2] \vee ( {4}\theta a^2\sigma_{\beta}^2)}
 \end{align*}
Lemma \ref{2ft2} and Lemma \ref{htto0} imply that the second term is bounded by $\frac{C}{T^{{\gamma}}}$. For the first term, {we have $\mathbb{E}[Q_T^2]\to 4\theta a^2\sigma_{\beta}^2  $, so we only need to show} when $T$ is large enough,
 \begin{align}\label{ Fourth moment Berry-Esseen bound}
 \E[Q_T^4]-3 \E[Q_T^2]^2\le \frac{C}{T^{ 2\gamma  }}.
 \end{align}
 In fact,  we have
 \begin{align}
 &  \E[Q_T^4]-3 \big(\E[Q_T^2]\big)^2 \nonumber\\
 &=\E\left[\left(\frac{F_T }{   \sqrt{T}}\right)^4\right] -3 \left[\E \left(\frac{F_T }{   \sqrt{T}}\right)^2\right]^2  - 3 \left\{\left[\E[Q_T^2]\right]^2 - \left[\E \left(\frac{F_T }{   \sqrt{T}}\right)^2\right]^2 \right\} \nonumber \\
 &+ \E\left[\left(\frac{H_T }{   \sqrt{T}}\right)^4\right]  +6 \E \left[\left(\frac{F_T }{   \sqrt{T}}\right)^2 \left(\frac{H_T }{   \sqrt{T}}\right)^2 \right]  -4\E \left[\left(\frac{F_T }{   \sqrt{T}}\right)^3 \left(\frac{H_T }{   \sqrt{T}}\right) \right] -4\E \left[\left(\frac{F_T }{   \sqrt{T}}\right) \left(\frac{H_T }{   \sqrt{T}}\right)^3 \right]  . \label{qt decomp}
 \end{align} 
 Proposition~\ref{contraction ft} and Eq. (5.2.5) of \cite{Nou 12} imply that
 \begin{align*}
 \E\left[\left(\frac{F_T }{  \sqrt{T}}\right)^4\right] -3 \left[\E \left(\frac{F_T }{   \sqrt{T}}\right)^2\right]^2 \le C \left(\frac{1}{T}\|f_T \otimes_1 f_T\|_{\mathfrak{H}^{\otimes 2}}\right)^2\le \frac{C}{T^{2\gamma}} \,.
 \end{align*}
 Lemma \ref{2ft2}, Lemma \ref{htto0} and the Cauchy-Schwarz inequality imply that   
 \begin{align*}
\abs{ \left[\E[Q_T^2]\right]^2 - \left[\E \left(\frac{F_T }{   \sqrt{T}}\right)^2\right]^2 }&\le  \E\left[ Q_T^2 +\left(\frac{F_T }{   \sqrt{T}}\right)^2\right]   \E \abs{  \frac{H_T }{   \sqrt{T}} \left(\frac{H_T-2 F_T }{\sqrt{T}} \right)   }  \\
& \le\frac{C}{ {T}} (1+T^{3\beta-2})\le \frac{C}{T^{2\gamma}}.
 \end{align*} From the Hypercontractivity of multiple Wiener-It\^{o} integrals, the Cauchy-Schwarz inequality, Lemma \ref{2ft2} and Lemma \ref{htto0},
the other three terms containing squared, cubic, and quartic $\frac{H_T }{   \sqrt{T}}  $ in \eqref{qt decomp} are all bounded by $\frac{C}{ {T}}$.\\
Finally, we deal with the term $\E \left[\left(\frac{F_T }{   \sqrt{T}}\right)^3 \left(\frac{H_T }{   \sqrt{T}}\right) \right]  $.  
{We apply the product formula of \eqref{ito.prod} for the term $F_T^3$ and use the orthogonality of multiple Wiener-It\^{o} integrals to obtain}
\begin{align}
\E \left[  {F_T }^3  {H_T } \right]  &=  16 \E[I_2((f_T\tilde{\otimes}_1f_T )\tilde{\otimes}_1 f_T) H_T ]+ 12 \E[I_2((f_T\tilde{\otimes}f_T )\tilde{\otimes}_2 f_T) H_T ] \nonumber\\
&+2\norm{f_T}^2_{\FH^{\otimes 2}} \E[F_T H_T]. \label{ft3 ht}
\end{align}
{We will deal with the three items on the right-hand side of the above equation} \eqref{ft3 ht}. First, we apply It\^{o}'s isometry and the Cauchy-Schwarz inequality to obtain
\begin{align*}
\abs{\E[I_2((f_T\tilde{\otimes}_1f_T )\tilde{\otimes}_1 f_T) H_T ]} &=2\abs{\innp{(f_T\tilde{\otimes}_1f_T )\tilde{\otimes}_1 f_T,\,h_T }_{\FH^{\otimes 2}}}\\
&\le \norm{f_T\tilde{\otimes}_1f_T }_{\FH^{\otimes 2}} \cdot \norm{f_T}_{\FH^{\otimes 2}} \cdot \norm{h_T}_{\FH^{\otimes 2}}\\
&\le C T^{\frac32-\gamma}.
\end{align*} where the last inequality is from Proposition~\ref{contraction ft}, Lemma \ref{2ft2} and the inequality (\ref{ht to 011}).\\
Second, we apply Lemma \ref{2ft2} and the inequality (\ref{ht ft to 011}) to obtain
\begin{align}\label{ft norm ft ht}
 \norm{f_T}^2_{\FH^{\otimes 2}} \E[F_T H_T] = \norm{f_T}^2_{\FH^{\otimes 2}}\cdot \abs{\innp{f_T ,\, h_T}_{\FH^{\otimes 2}}}\le C (T+T^{3\beta-1}).
\end{align}
Third, the symmetry of $f_T$ implies that  on $[0,T]^4$,
\begin{align}
f_T\tilde{\otimes}f_T (u,v,x,y)&=\frac13 \left[ e^{-\theta (\abs{u-v}+\abs{x-y}  )} +  e^{-\theta (\abs{u-x}+\abs{v-y}  )}+ e^{-\theta (\abs{u-y}+\abs{v-x}  )}\right], \nonumber\\
(f_T\tilde{\otimes}f_T )\tilde{\otimes}_2 f_T (u,v)&=\frac13  \left[ \norm{f_T}^2_{\FH^{\otimes 2}}\cdot f_T(u,v) + 2 \innp{e^{-\theta (\abs{u-x}+\abs{v-y}  )} ,\,  f_T(x,y)}_{\FH^{\otimes 2}}\right]. \label{fttensors}
\end{align}
We apply Fubini's theorem, the Cauchy-Schwarz inequality and Proposition~\ref{contraction ft} to obtain
\begin{align}
\abs{ \left\langle \innp{e^{-\theta (\abs{u-x}+\abs{v-y}  )} ,\,  f_T(x,y)}_{\FH^{\otimes 2}},\,h_T(u,v)\right\rangle_{\FH^{\otimes 2}}}
&=\abs{\innp{(f_T {\otimes}_1f_T ) {\otimes}_1 f_T,\,h_T }_{\FH^{\otimes 2}}}\le C T^{\frac32-\gamma}. \label{exp ft ht}
\end{align}
Hence, by \eqref{fttensors}, \eqref{exp ft ht}, \eqref{ft norm ft ht}, and It\^{o} isometry,
\begin{align*}
\E[I_2((f_T\tilde{\otimes}f_T )\tilde{\otimes}_2 f_T) H_T ]\le C (T^{\frac32-\gamma}+ T+T^{3\beta-1}).
\end{align*}
{By the above arguments, we obtain}
\begin{align*}
\abs{\E \left[\left(\frac{F_T }{   \sqrt{T}}\right)^3 \left(\frac{H_T }{   \sqrt{T}}\right) \right] }\le C (T^{-\frac12-\gamma}+ T^{-1}+T^{3(\beta-1)})\le \frac{C}{T^{2\gamma}}.
\end{align*}
Therefore, the desired inequality (\ref{ Fourth moment Berry-Esseen bound}) holds. {This concludes the proof.} 
\end{proof}
 
\noindent{\it Proof of Theorem~\ref{B-E bound thm}.\,} 
It follows from the equations (\ref{bt bt})-(\ref{ratio 1}), Theorem~\ref{kp}, and Proposition~\ref{prop bt lim}  that  there exists a constant $C $ independent of $T$ such that for $T$ large enough,
\begin{align}
    &\sup_{z\in \Rnum}\abs{P\Big(\sqrt{\frac{T}{\theta \sigma^2_{\beta}} }(\hat{\theta}_T-\theta )\le z\Big)-P(Z\le z)} \nonumber \\
& \le C \times \max \Big(\abs{b_T^2-\frac{\norm{f_T}_{\mathfrak{H}^{\otimes 2}}^2}{2\theta \sigma^2_{\beta}T}},\,\frac{1}{T}\norm{f_T\otimes_1 f_T}_{\mathfrak{H}^{\otimes 2}},\, \frac{1}{\sqrt{T}}\norm{f_T\otimes_1 g_T}_{\mathfrak{H}^{\otimes 2}},\nonumber \\
& \frac{1}{\sqrt{T}}\innp{f_T,\,g_T}_{\mathfrak{H}^{\otimes 2}},\,\norm{g_T}_{\mathfrak{H}^{\otimes 2}}^2,\,\norm{g_T\otimes_1 g_T}_{\mathfrak{H}^{\otimes 2}}\Big),\label{sup le}
\end{align}
where $g_T$ is given by \eqref{gt ts}. Denote $a= C_{\beta} \Gamma(2\beta-1) \theta^{-2\beta} $. Then Remark~\ref{rem 36} and Lemma~\ref{2ft2} imply that  there exists a constant $C>0$ such that  for $T$ large enough,
\begin{align*}
\abs{b_T^2-\frac{\norm{f_T}^2_{\mathfrak{H}^{\otimes 2}}}{2\theta \sigma^2_{\beta}T}}\le \abs{b_T^2- a^2}+\abs{\frac{\norm{f_T}^2}{2\theta \sigma^2_{\beta}T}-a^2}\le   \frac{C}{T^{3-4\beta}}.
\end{align*}
Since $g_T=({f_T-h_T} )/{2\theta T}$, we apply Minkowski's inequality, Lemma~\ref{2ft2} and Lemma \ref{htto0} to conclude that there exists a constant $C>0$ such that  for $T$ large enough,
\begin{align*}
\norm{g_T}_{\FH^{\otimes 2}}\le \frac{C}{\sqrt{T}} \left[\norm{\frac{f_T}{\sqrt{T}}}_{\FH^{\otimes 2}} + \norm{\frac{h_T}{\sqrt{T}}}_{\FH^{\otimes 2}}\right]\le  \frac{C}{\sqrt{T}},
\end{align*} which, together with the Cauchy-Schwarz inequality and  Lemmas~\ref{2ft2}, implies 
\begin{align*}
 \frac{1}{\sqrt{T}}\norm{f_T\otimes_1 g_T}_{\FH^{\otimes 2}} \le   \frac{C}{\sqrt{T}},\ \frac{1}{\sqrt{T}}\abs{\innp{f_T,\,g_T}_{\FH^{\otimes 2}}} \le   \frac{C}{\sqrt{T}}, \ \norm{g_T\otimes_1 g_T}_{\FH^{\otimes 2}}\le   \frac{C}{{T}} ,\ \norm{g_T}^2_{\FH^{\otimes 2}}\le   \frac{C}{{T}}.
\end{align*}
Substituting (\ref{zhou ineq}) and the above inequalities into (\ref{sup le}), we obtain the desired Berry-Ess\'{e}en bound (\ref{b-e bound 34}).

The Berry-Ess\'{e}en bound (\ref{b-e bound 44}) can be obtained by the similar arguments of Theorem 3.2 in \cite{SV 18}. 
Denote
   $$A :=P(\sqrt{\frac{4\beta^2 T}{\theta \sigma^2_{\beta}}} (\tilde{\theta}_T-\theta )\le z) - P(Z\le z) \,.$$
Since $\tilde{\theta}_T>0$, we shall suppose $z>-\sqrt{\frac{4\beta^2 T}{\theta \sigma^2_{\beta}}}\theta $. Otherwise, the standard estimate for a normal random variable $P(\abs{Z}\ge t)\le \frac{1}{t},\,\forall t>0$ yields $$|A|=P(Z\le z) \leq \frac{C}{\sqrt{T}} \,.$$
Now by (\ref{theta tilde formula}) for the formula of $\tilde{\theta}_T$ , we have
\begin{align*}
A &=P\left( \tilde{\theta}_T-\theta \le \sqrt{\frac{\theta \sigma^2_{\beta}}{4\beta^2 T}} z\right) - P(Z\le z)\\
&=P\left( \frac{1}{C_{\beta}  \Gamma(2\beta-1) T} \int_0^T X_t^2\mathrm{d} t  \ge  \big( \sqrt{\frac{\theta \sigma^2_{\beta}}{4\beta^2 T}}z+\theta\big)^{-2\beta} \right) - P(Z\le z)\\
&= P\left( \frac{1}{T} \int_0^T X_t^2\mathrm{d} t -a \ge \Gamma(2\beta-1)C_{\beta}\Big[ \big( \sqrt{\frac{\theta \sigma^2_{\beta}}{4\beta^2 T}}z+\theta\big)^{-2\beta}-\theta^{-2\beta}\Big]\right) - P(Z\le z)\\
&=P\left( \frac{Q_T}{ \sqrt{T}}+ 2\theta(b_T-a) \ge 2 a  \theta \Big[ \big( 1+\frac{z \sigma_{\beta}}{2\beta \sqrt{\theta T}}\big)^{-2\beta}-1 \Big] \right) - P(Z\le z),
\end{align*}
where {in the last step we have used \eqref{asy norm moment 2} and the term $Q_T = \frac{F_T - H_T}{\sqrt{T}}$ as given in Proposition~\ref{cor b-e 2}.}
We take the short-hand notation $ \bar{\Phi}(z)=1-P(Z\le z)$ and 
$$\nu=\sqrt{  \frac{\theta T}{   \sigma_{\beta}^2}}   \Big[ \big( 1+\frac{z \sigma_{\beta}}{2\beta \sqrt{\theta T}}\big)^{-2\beta}-1 \Big].$$ 
Then 
\begin{align*}
|A| 
&= \abs{ P\left( \frac{1}{\sqrt{ 4\theta a^2\sigma_{\beta}^2}}\big(  {Q_T}+ 2\theta { \sqrt{T}} (b_T-a)\big) \ge   \nu\right)- P(Z\le z) }\\
&\le   \abs{ P\left( \frac{ {Q_T}}{\sqrt{ 4\theta a^2\sigma_{\beta}^2}}  \ge   \nu -  \frac{  {  \sqrt{\theta T}} (b_T-a)}{\sqrt{   a^2\sigma_{\beta}^2}} \right)-\bar{\Phi} \left(\nu - \frac{  { \sqrt{\theta T}} (b_T-a)}{\sqrt{   a^2\sigma_{\beta}^2}}\right)} \\
&+ \abs{\bar{\Phi} \left(\nu- \frac{ { \sqrt{\theta T}} (b_T-a)}{\sqrt{   a^2\sigma_{\beta}^2}} \right)-\bar{\Phi}(\nu) } + \abs{\bar{\Phi}(\nu)-P(Z\le z) } ,
\end{align*} 
where the first term is bounded by $\frac{C}{T^{\gamma}}$ from Proposition~\ref{cor b-e 2}, the third term is bounded by $\frac{C}{\sqrt{T} }$ from Lemma~\ref{bound lem 54} below,  and the second term is bound by $\frac{C}{T^{\frac{3-4\beta}{2}}}$ from Remark~\ref{rem 36}  and the standard estimate for the tail of a normal random variable, $\abs{\bar{\Phi}(z_1)- \bar{\Phi}(z_2)}\le \abs{z_1-z_2} $. 
{\hfill\large{$\Box$}}\\

\begin{lemma}\label{bound lem 54}
 Let $c>0$ be a constant. Denote $\nu(z)=  \frac{c}{ 2\beta}  \sqrt{T}  \Big[ \big( 1+\frac{z }{c \sqrt{T}}\big)^{-2\beta}-1 \Big]$ when $z> -c \sqrt{T}$ and $ \bar{\Phi}(z)=1-P(Z\le z)$. Then there exists some positive number $C$ independent of $T$ such that 
\begin{align*}
 \sup_{z> -c \sqrt{T}}\abs{  \bar{\Phi}(\nu) -\Phi (z)}\le \frac{C}{\sqrt{T}}.
\end{align*}
\end{lemma}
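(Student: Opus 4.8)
We outline the plan. Write $\Phi(z)=P(Z\le z)$ for the standard normal distribution function, so that $\bar\Phi=1-\Phi$ and, by symmetry of $Z$, $\Phi(z)=\bar\Phi(-z)$; hence the quantity to estimate is $\Delta(z):=\abs{\bar\Phi(\nu(z))-\bar\Phi(-z)}$. The idea is to turn the pointwise limit $\nu(z)\to -z$ (valid for each fixed $z$ as $T\to\infty$) into a uniform bound, by showing that the error $\nu(z)+z$ is \emph{quadratically} small in the rescaled variable $x:=z/(c\sqrt T)\in(-1,\infty)$, and then to balance this against the Gaussian weight.

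The first step is the algebraic identity $\nu(z)+z=c\sqrt T\,g(x)$, where $g(x):=\tfrac1{2\beta}\big((1+x)^{-2\beta}-1\big)+x$, together with the elementary facts $g(0)=g'(0)=0$ and $g''(x)=(2\beta+1)(1+x)^{-2\beta-2}>0$ on $(-1,\infty)$. Thus $g$ is convex with global minimum $0$ at $x=0$, so $g\ge 0$, and on a fixed interval $[-\delta,\delta]$ one has $g(x)\le C_\delta\,x^2$. Since $(1+x)^{-2\beta}-1$ is decreasing and vanishes at $0$, it has the sign of $-x$, so $\nu(z)$ has the sign of $-z$, and combined with $\nu(z)+z\ge 0$ this places $-z$ and $\nu(z)$ on the same side of the origin. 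Finally $\abs{\nu(z)}/\abs z=\tfrac1{2\beta}\,\abs{(1+x)^{-2\beta}-1}/\abs x\to 1$ as $x\to 0$, so after shrinking $\delta$ I may assume $\abs{\nu(z)}\ge\tfrac12\abs z$ whenever $\abs x\le\delta$.

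Next I would split on the size of $z$. \emph{Regime $\abs z\le c\delta\sqrt T$ (i.e.\ $\abs x\le\delta$).} The mean value theorem for $\bar\Phi$ gives $\Delta(z)\le\abs{\nu(z)+z}\cdot\sup_u\tfrac1{\sqrt{2\pi}}e^{-u^2/2}$, the supremum over $u$ on the segment joining $-z$ and $\nu(z)$; since that segment lies on one side of $0$ and both endpoints have modulus $\ge\tfrac12\abs z$, this supremum is at most $\tfrac1{\sqrt{2\pi}}e^{-z^2/8}$, and with $\abs{\nu(z)+z}=c\sqrt T\,g(x)\le \tfrac{C_\delta}{c\sqrt T}\,z^2$ we get $\Delta(z)\le \tfrac{C}{\sqrt T}\,z^2e^{-z^2/8}\le \tfrac{C'}{\sqrt T}$, using boundedness of $t\mapsto t^2e^{-t^2/8}$. \emph{Regime $\abs z> c\delta\sqrt T$.} Then $\bar\Phi(\abs z)\le e^{-z^2/2}\le e^{-c^2\delta^2T/2}$; moreover $\abs x\ge\delta$ forces $\abs{(1+x)^{-2\beta}-1}\ge\kappa$ for a constant $\kappa=\kappa(\beta,\delta)>0$, so $\abs{\nu(z)}\ge\tfrac{\kappa c}{2\beta}\sqrt T$, and $\nu(z)$ still has the sign of $-z$. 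Using $\bar\Phi(t)\le e^{-t^2/2}$ for $t\ge 0$, both $\bar\Phi(\nu(z))$ and $\bar\Phi(-z)=\Phi(z)$ then lie within $e^{-c^2\delta^2T/2}+e^{-\kappa^2c^2T/(8\beta^2)}$ of the common value $1$ (if $z\ge 0$) or $0$ (if $z<0$), so $\Delta(z)$ is exponentially small in $T$, a fortiori $\le C/\sqrt T$ for $T$ large. Combining the two regimes completes the proof.

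The only genuine difficulty, and where I expect the work to lie, is the first regime: the crude Lipschitz bound $\abs{\nu(z)+z}/\sqrt{2\pi}$ is \emph{not} uniformly $O(1/\sqrt T)$ — it is of order $\sqrt T$ when $\abs z\sim\sqrt T$ — so one must simultaneously exploit that $\nu(z)+z$ is quadratic in $x$ near $0$ (which is exactly the convexity of $g$, i.e.\ $g(0)=g'(0)=0$) and that the Gaussian factor $e^{-z^2/8}$ absorbs the resulting $z^2$. Everything else is routine normal tail estimation.
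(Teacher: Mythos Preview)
Your proof is correct and follows essentially the same idea as the paper's: control $|\nu(z)+z|$ by a second-order bound $O(z^2/\sqrt T)$ in the central region and absorb the $z^2$ via the Gaussian density, then invoke normal tail bounds for $|z|$ of order $\sqrt T$. Your packaging via the convex function $g$ and the symmetric two-regime split is a bit cleaner than the paper's three-case argument (which handles $z<0$ and $z\ge 0$ separately, the latter through the substitution $t=z^2 s$), but the substance is the same.
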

\begin{proof} We follow the line of the proof of Theorem 3.2  in \cite{SV 18}.
By the mean value theorem, there exists some number $\eta\in (0,1)$ such that 
\begin{align*}
\nu=- z \big( 1+\frac{z \eta}{c \sqrt{T}}\big)^{-2\beta-1} .
\end{align*}Hence, 
\begin{align*}
\abs{  \bar{\Phi}(\nu) -\Phi (z)}&=\abs{  {\Phi}\Big( \big( 1+\frac{z \eta}{c \sqrt{T}}\big)^{-2\beta-1}\cdot z\Big) -\Phi (z)}\\
&=\frac{1}{\sqrt{2\pi}}  {\int^{z}_{ z \big( 1+\frac{z \eta}{c \sqrt{T}}\big)^{-2\beta-1} } \,e^{-\frac{t^2}{2}}\dif t} \,.
\end{align*}
When $z\in (-c \sqrt{T}, -\frac12 c \sqrt{T}]$, it is obvious that
\begin{align*}
\frac{1}{\sqrt{2\pi}}  {\int_{ z \big( 1+\frac{z \eta}{c \sqrt{T}}\big)^{-2\beta-1}  }^{z} \,e^{-\frac{t^2}{2}}\dif t}\le \Phi(-\frac12 c \sqrt{T})\le \frac{C}{\sqrt{T}}.
\end{align*}
When $z\in( -\frac12 c \sqrt{T},\,0)$, we have
\begin{align}\label{eqn. lem 54}
\frac{1}{\sqrt{2\pi}}  {\int_{z \big( 1+\frac{z \eta}{c \sqrt{T}}\big)^{-2\beta-1}  }^{z} \,e^{-\frac{t^2}{2}}\dif t}&\le \abs{z} e^{-\frac{z^2}{2}}\Big(  \big( 1+\frac{z \eta}{c \sqrt{T}}\big)^{-2\beta-1} -1\Big) .
\end{align}The mean value theorem implies that there exists some number $\eta'\in (0,1)$ such that
\begin{align*}
 \big( 1+\frac{z \eta}{c \sqrt{T}}\big)^{-2\beta-1} -1= (-1-2\beta) \frac{z\eta}{c\sqrt{T}} \big( 1+\frac{z \eta\eta'}{c \sqrt{T}}\big)^{-2\beta-2}\le (1+2\beta) \left(\frac12\right)^{-2\beta-2}\frac{\abs{z}}{c\sqrt{T}}.
\end{align*} 
Substituting the above inequality into (\ref{eqn. lem 54}), and 
since the function $f(z)= {z}^2 e^{-\frac{z^2}{2}}$ is uniformly bounded, we have that when $z\in( -\frac12 c \sqrt{T},\,0)$, 
\begin{align*}
\frac{1}{\sqrt{2\pi}}  {\int_{z \big( 1+\frac{z \eta}{c \sqrt{T}}\big)^{-2\beta-1}  }^{z} \,e^{-\frac{t^2}{2}}\dif t}&\le \frac{C}{\sqrt{T}}.
\end{align*}
When $z\ge 0$, using the mean value theorem and making the change of variable $t = z^2 s$ together with the fact that $f_2(s, z) = z^2e^{-\frac{s^2 z^4}{2}}$ is also uniformly
bounded, we conclude that there exists a number $\eta'\in (0,1)$ such that 
\begin{align*}
 {\int_{ z \big( 1+\frac{z \eta}{c \sqrt{T}}\big)^{-2\beta-1}  }^{z} \,e^{-\frac{t^2}{2}}\dif t} &={\int_{ \frac{1}{z} \big( 1+\frac{z \eta}{c \sqrt{T}}\big)^{-2\beta-1}  }^{\frac{1}{z}} \,\,z^2e^{-\frac{s^2z^4}{2}}\dif s}\\
 &\le C \frac{1}{z} \Big( 1-  \big( 1+\frac{z \eta}{c \sqrt{T}}\big)^{-2\beta-1}  \Big)\\
 &= C  (1+2\beta) \frac{1}{z} \big( 1+\frac{z \eta\eta'}{c \sqrt{T}}\big)^{-2\beta-2}  \frac{z \eta}{c \sqrt{T}} \\
 &\le \frac{C}{\sqrt{T}}.
\end{align*}
\end{proof}
\vskip 0.2cm {\small {\bf  Acknowledgements}:
 Y. Chen is supported by NSFC (No.11871079, No.11961033, No.11961034).
}




\begin{thebibliography}{10}

\bibitem{il97} Istas J, Lang G. Quadratic variations and estimation of the local H\"older index of a gaussian process. Ann Inst H Poincar\'e Probab Stat, 1997, \textbf{33}(4):407-436

\bibitem{KM 12} Kubilius K, Mishura Y.The rate of convergence of Hurst index estimate for the stochastic differential
equation. Stochastic Processes and their Applications, 2012, \textbf{122}(11):3718-3739

\bibitem{k04} Kutoyants Y A.  Statistical Inference for Ergodic Diffusion Processes. Springer, 2004

\bibitem{ls01} Liptser R S, Shiryaev A N,  Statistics of Random Processes: II Applications, second ed. In: Applications of Mathematics. Springer. 2001

\bibitem{KLB 02}  Kleptsyna M L,  Le Breton A.   Statistical analysis of the fractional Ornstein-Uhlenbeck type process. Stat
Inference Stoch Process, 2002, \textbf{5}:229-248

\bibitem{tv 07} Tudor C,  Viens F.  Statistical aspects of the fractional stochastic calculus. Ann. Statist, 2007, \textbf{35}(3):1183-1212

\bibitem{bcs11} Bercu B, Coutin L, Savy N.  Sharp large deviations for the fractional Ornstein-Uhlenbeck process. Theory Probab Appl, 2011, \textbf{55}(4):575-610

\bibitem{bk10} Brouste A, Kleptsyna M.  Asymptotic properties of MLE for partially observed fractional diffusion system. Stat Inference Stoch Process, 2010, \textbf{13}(1):1-13

\bibitem{hu Nua 10}Hu Y,    Nualart D.  Parameter estimation for fractional Ornstein-Uhlenbeck processes. Stat Probab Lett, 2010, \textbf{80} (11-12):1030-1038

\bibitem{hu nua zhou 19}Hu Y, Nualart D, Zhou H. Parameter estimation for fractional Ornstein-Uhlenbeck processes of general Hurst parameter. 
Stat Inference Stoch Process, 2019, \textbf{22}:111-142
	
\bibitem{bs83} Basawa I V, Scott D J. Asymptotic Optimal Inference for Non-ergodic Models,  Lecture Notes in Statistics 17.  NewYork: Springer, 1983

\bibitem{dk03} Dietz H M, Kutoyants Y A. Parameter estimation for some non-recurrent solutions of SDE. Statistics and Decisions, 2003, \textbf{21}(1):29-46

\bibitem{beo11} Belfadli R, Es-Sebaiy K, Ouknine Y. Parameter estimation for fractional Ornstein-Uhlenbeck processes: Non-ergodic case. Front Sci Eng Int J Hassan II Acad Sci Technol,  2011, \textbf{1}(1):1-16

\bibitem{eeo16} El Machkouri M, Es-Sebaiy K, Ouknine Y.  Least squares estimator for non-ergodic Ornstein-Uhlenbeck processes driven by Gaussian processes. J Korean Stat Soc,  2016, \textbf{45}:329-341

\bibitem{mendy 13}  Mendy I.  Parametric estimation for sub-fractional Ornstein-Uhlenbeck process. Journal of Statistical Planning and Inference, 2013. \textbf{143} (4):663-674

\bibitem{dmm11} Diedhiou A, Manga C, Mendy I. Parametric estimation for SDEs with additive sub-fractional Brownian motion. Journal of Numerical Mathematics and Stochastics 2011, \textbf{3}(1):37-45


\bibitem{SV 18} Sottinen T, Viitasaari L. Parameter estimation for the Langevin equation with stationary-increment Gaussian noise. Stat Inference Stoch Process, 2018, \textbf{21}(3):569-601

\bibitem{cai xiao 18}Cai C, Xiao W. Parameter estimation for mixed sub-fractional Ornstein-Uhlenbeck process.  arXiv:1809.02038v2

\bibitem{SghA 13} Sghir A. The generalized sub-fractional Brownian motion. Communications on Stochastic Analysis, 2013, \textbf{7} (3): Article 2

\bibitem{Jolis 07}Jolis M.  On the Wiener integral with respect to the fractional Brownian motion on an interval.   J  Math  Anal  Appl,  2007, \textbf{330}:1115-1127

\bibitem{Nua} Nualart D. The Malliavin calculus and related topics, Second edition. Springer, 2006

\bibitem{Nou 12} Nourdin I, Peccati G.  Normal approximations with Malliavin calculus: from Stein's method to universality (Vol. 192). Cambridge University Press. 2012

\bibitem{Nua Pec  05}Nualart D, Peccati G. Central limit theorems for sequences of multiple stochastic integrals. Ann Probab,  2005, \textbf{33}(1):177-193

\bibitem{kim 3}Kim Y T,  Park H. S. Optimal Berry-Ess\'{e}en bound for statistical estimations and its application to SPDE.  Journal of Multivariate Analysis, 2017, \textbf{155}: 284-304

\bibitem{chw 17} Chen Y,  Hu Y,  Wang Z. Parameter Estimation of Complex Fractional Ornstein-Uhlenbeck Processes with Fractional Noise. 
ALEA Lat Am J  Probab  Math Stat,  2017, \textbf{14}:613-629

\bibitem{chenkl 19} Chen Y,  Kuang N H,  Li Y.   Berry-Ess\'{e}en bound for  the Parameter Estimation  of Fractional Ornstein-Uhlenbeck Processes.  Stoch Dyn,  2020,  \textbf{ 20}(1):2050023(13 pages)

\bibitem{Billing 08} Billingsley P. Probability and measure. John Wiley \& Sons, 2008

\end{thebibliography}
\end{document}